\numberwithin{equation}{section}
\newtheorem{theorem}{Theorem}[section]   
\newtheorem{lemma}[theorem]{Lemma}
\theoremstyle{definition}
\title{Blow up criteria for a fluid dynamical model arising in astrophysics}
\author{
\large{\bf{Donatella Donatelli} and \bf{Lorenzo Pescatore}}\\
\normalsize Department of Information Engineering, Computer Science and Mathematics \\
\normalsize University of L'Aquila \\
\normalsize 67100 L'Aquila, Italy. \\
\normalsize {donatella.donatelli@univaq.it}, {lorenzo.pescatore@graduate.univaq.it}
}
\date{}
\begin{document}
\maketitle

\begin{abstract}
In this paper we deal with the existence of local strong solution for  a perfect compressible viscous fluid, heat conductive and self gravitating, coupled with a first order kinetics used in  astrophysical hydrodynamical models. In our setting the vacuum is allowed  and as a byproduct of the existence result we get a blow-up criterion for the local strong solution. Moreover we prove a blow-up criterion for the local strong solutions in terms of the  velocity gradient, the mass fraction  gradient and the temperature similar to the well known Beale-Kato-Majda criterion for ideal incompressible flows.

\end{abstract}
\smallskip

\textbf{Key words}:  Blow-up criteria, Strong solutions,  Compressible Navier- Stokes equations, Heat-conductive reactive  flows.

\section{Introduction}
In this paper we consider a perfect compressible viscous fluid,  heat conductive and self gravitating, coupled with a first order kinetics.
\begin{equation}\label{pb1}
		\begin{cases}
\rho_t+ \operatorname{div}(\rho u)=0 \\
c_v((\rho \theta)_t+\operatorname{div}(\rho \theta u))-k\Delta \theta+ p\operatorname{div}u=Q(\nabla u)+ \rho h\\
(\rho u)_{t}+ \operatorname{div} (\rho u \otimes u)+ Lu+ \nabla p= \rho f\\
(\rho Z)_{t}+\operatorname{div}(\rho uZ))=-k \phi (\theta) \rho Z +\operatorname{div}(D \nabla Z),\\
\Delta \Phi=4\pi G  \rho, \\
\end{cases}
\end{equation}
\\
 where $t\geq 0$, $x\in \Omega\subset\mathbb{R}^{3}, \, \Omega$ bounded,
 $$Q(\nabla u)=\frac{\mu}{2}|\nabla u+\nabla {u}^T|^2+\lambda (\operatorname{div}u)^2,\quad  Lu=-\mu \Delta u-(\lambda+\mu)\nabla\operatorname{div}u.$$
\\
The system \eqref{pb1} is of interest since in astrophysical literature it is a reasonable model to describe the evolution of stars \cite{Cha1}, \cite{Cha2}, \cite{Kipp}, \cite{Led}.
Hence we denote by $\rho=\rho(x,t), \theta=\theta(x,t),  u=u(x,t)$ and $ Z=Z(x,t)$ the unknown density,  temperature,  velocity and mass fraction of the reactant.  It is well known that in the case of a star,  a lot of nuclear reactions take place giving rise to "burning" and combustion phenomena.
\\
In order to describe the gravitational phenomena and the combustion reactions we deal with the following profiles for the external force and heat source
$$f=-\nabla \Phi,\quad  h=qk\phi(\theta)Z, $$
where $\Phi$ is the gravitational potential which solves the following elliptic partial differential equation
$$ \Delta \Phi=4\pi G  \rho,$$
with the following representation formula 
\begin{equation*} \label{RepPoisson}
\Phi(x,t)=-G\int_{\Omega} \dfrac{\rho(z,t)}{|x-z|}dz.
\end{equation*}
\\
Concerning the heat flow $h$, for the Arrhenius law $\phi(\theta)$, the most used function is the following 

\begin{equation*} 
\begin{split}
\phi(\theta)= \begin{cases} e^{-\frac{E}{\theta}} \quad \quad \quad & \theta >0 \\ 0  \quad \quad \quad \quad & \theta \le 0,
\end{cases}
\end{split}
\end{equation*}
where $E$ is a positive constant representing the activation energy.
\\
In this paper we consider the so called modified Arrhenius law 

\begin{equation*} 
\begin{split}
\phi(\theta)= \begin{cases} \theta^{\alpha} e^{-\frac{E}{\theta}} \quad \quad \quad & \theta >0 \\ 0  \quad \quad \quad \quad & \theta \le 0,
\end{cases}
\end{split}
\end{equation*}
which is more appropriate in order to describe astrophysical phenomena as discussed in \cite{Ducomet}.
Our choice for the rate constant is $\alpha=\frac{1}{2}$ which is appropriate to deal with combustion processes.
We point out that in this paper the regularity of the external force $f$ and heat source $h$ is not a priori assigned,  indeed they strongly depends on the solution itself.  This represents one of the main issues in the computations for the local existence and the blow up results and it is a significant difference with respect to previous results in the fluid dynamic literature. \\
Polytropic fluids are characterized by the following constitutive law
$$ p=R\rho\theta, $$
while the viscosity and thermal coefficients satisfy the following natural restrictions
\[
\mu>0,\quad 3\lambda+2\mu\geq 0, \quad c_v >0,\quad  R> 0,\quad q\geq 0,\quad  k\geq 0.
\]
In the absence of vacuum,  a local existence theory regarding local strong solutions for heat conductive fluids has been developed by Nash \cite{Nash} and Itaya \cite{Ita} while uniqueness is discussed by Serrin in \cite{Serrin}.  In our analysis the vacuum is allowed. The key difference is that energy and momentum equations loose their parabolicity in the presence of vacuum.  This lack of regularity needs to be compensated by a compatibility condition which was first introduced by Straskaba in \cite{Strask} using a fixed point argument. 
In the vacuum case,  some results has been obtained in a series of papers by Cho and collaborators \cite{cho2004},\cite{cho2006},\cite{Cho 2006 bis}. They proved a local existence results for strong solution and a blow up criteria for both isoentropic or polytropic flows by using an iteration argument in the framework of Sobolev spaces for bounded or unbounded domains. 
Later on a more refined blow up criteria were proved,  for isoentropic fluids Huang and Xin \cite{huang2010} proved a blow up criteria of the Beale Kato Majda type \cite{beale},  involving only $\nabla u$ and by assuming a weaker compatibility condition on the initial data than \cite{Cho 2006 bis}.
Moreover in \cite{Sun} Sun et al.  established a blow up criterion in terms of the upper bound of the density,  in the case of zero external forces.  This results keeps the analogy with the incompressible case,  where thanks to the dualism between pressure and velocity, high regularity on the pressure corresponds to the smoothness of the Leray weak solutions.  A blow up criteria involving $\operatorname{div}u$ was proved by Du. et al. in \cite{du} which is relevant in terms of the Leray-Hopf decomposition, while Fan et collaborators in \cite{fan} generalized the result of Huang in the case of heat conductive flows,  by adding a term depending on the temperature. 
For system (\ref{pb1}),  some results on the existence of global weak solutions has been obtained in a paper by Donatelli and Trivisa in \cite{Donatelli} in the case of zero external forces, 
They proved the existence of  global weak solutions for an initial boundary value problem with large initial data by using weak convergence methods, compactness and interpolation arguments. 

\subsection{Main results}
Our analysis is focussed on the existence of strong solutions with vacuum,  in the framework of Sobolev spaces.
Hence we assume the following initial conditions
\begin{equation}\label{ic1}
\rho_0\ge 0, \, \rho_0\in H^{1}\cap W^{1,q},  \, 0\le Z_0 \le 1,\,  (\theta_0,u_0,Z_0)\in H_{0}^1\cap H^2, \, q\in (3,6]
\end{equation}
boundary conditions,
\begin{equation}
(\theta, u,Z)=(0, 0,0) \quad \text{on} \quad (0, T)\times \partial\Omega
\end{equation}
and compatibility conditions,
\begin{equation}\label{cc1d}
-k\Delta \theta_0-Q(\nabla u_0)=\rho_{0}^\frac{1}{2}g_1 \quad \text{and} \quad Lu_0+\nabla p_0=\rho_{0}^\frac{1}{2}g _2\quad \text{in}\quad \Omega
\end{equation}
for some $(g_1,g_2)\in L^2,$  $p_0=R\rho_0\theta_0.$ 
\\
A stronger compatibility condition can be formally obtained by sending $t\rightarrow 0$ in the momentum and energy equation. Indeed by rewriting the momentum equation in the following form: 
$$ -\mu \Delta u-(\lambda+\mu) \nabla \operatorname{div}u+ \nabla p=\rho g$$ 
with $g=f-u_t-u\cdot \nabla u$
and by assuming $(\rho, u)$ smooth and by sending $t\rightarrow 0$ we get:
$$\mu \Delta u_0-(\lambda+\mu) \nabla \operatorname{div}u_0+ \nabla p_0=\rho_0 g_1$$ where $g_{1}=g_{|t=0}$. This condition turns out to be a stronger compatibility condition  with respect to \eqref{cc1d} and leads to some additional regularity on the solution as discussed in \cite{cho2006}. Despite this,  during the computations it turns out that (\ref{cc1d}) is the exact regularity needed because it represents the $L^2$ integrability of $\sqrt{\rho} u_t, \sqrt{\rho} \theta_t$ at $t=0.$ Moreover we point out that \eqref{cc1d} is also a necessary condition for the existence of strong solutions in the regularity class of Theorem \ref{Teo2.1}. Indeed by rewriting the momentum equation as
$$Lu(t)+\nabla {p(t)}=\rho(t)^\frac{1}{2} {G_1(t)}$$ with $G_1(t)=\rho(t)^\frac{1}{2} (f-u_t-u \cdot \nabla u)$ and since $G_1\in L^\infty(0, T_*; L^2)$ we have that there exists a sequence $ \{t_k\}, \, {t_k} \rightarrow 0$ such that $G_1(t_k)$ is a bounded in $L^2$ so  it is weakly convergent to a certain $g_1\in L^2.$
A similar argument holds for the energy equation. 
Note that whenever the initial density $\rho_0$ is bounded away from zero,  then condition \eqref{cc1d} is automatically satisfied for initial data with the regularity \eqref{ic1}. \\
Our first result, Theorem \ref{Teo2.1} extends the local existence result in \cite{Cho 2006 bis} by adding the time evolution equation for the mass fraction $Z.$ It gives us the
class of regularity for strong solutions and also uniqueness.  In order to prove this result,  we use an approximation technique considering first a linearized system where we were able to prove the well posedness of the problem and some useful bounds and then,  after defining properly the approximate solution,  we prove convergence to the local solution of the nonlinear original problem. 
We stress out that the dependence of the external force $f$ and heat source $h$ on the solution leads to a strongly coupled system in the iteration scheme arising a lot of new technical issues.

\begin{theorem}
\label{Teo2.1}
Assume that the data $(\rho_0, \theta_0, u_0, Z_0)$ satisfy the regularity condition $$\rho_0\ge 0,\quad \quad \rho_0\in H^{1}\cap W^{1,q}, \quad 0\le Z_0\le 1 \quad (\theta_0,u_0, Z_0)\in H_{0}^1\cap H^2,$$
and the compatibility condition
\begin{equation}\label{compat cond}
-k\Delta \theta_0-Q(\nabla u_0)=\rho_{0}^\frac{1}{2}g_1 \quad \text{and} \quad Lu_0+\nabla p_0=\rho_{0}^\frac{1}{2}g _2\quad in\quad \Omega,
\end{equation}
for some $(g_1,g_2)\in L^2,$ where $p_0=R\rho_0\theta_0.$
Then there exists a small time $T_*>0$ and a unique strong solution $(\rho, \theta, u, Z)$ to the initial value problem (\ref{pb1})-(\ref{cc1d}) such that
$$ \rho\in C([0,T_*];H^{1}\cap W^{1,q}),  \quad \rho_t\in C([0, T_*]; L^2\cap L^q), $$
$$(\theta,  u,  Z)\in C([0, T_*]; H_{0}^1\cap H^2)\cap L^2(0, T_*; W^{2,q}),$$
\begin{equation}\label{crf}
(\theta_t,  u_t,  Z_t)\in L^2(0, T_*; H_{0}^1) \ \text{and}\ (\sqrt{\rho}\theta_t,   \sqrt{\rho}u_t, \sqrt{\rho}Z_t) \in L^\infty (0, T_*; L^2).
\end{equation}
\end{theorem}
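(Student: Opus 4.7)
The plan is to follow a linearization-and-iteration scheme in the spirit of Cho--Kim \cite{Cho 2006 bis}, extended to handle the simultaneous coupling through the self-gravitating force $f = -\nabla\Phi[\rho]$, the reactive heat source $h = qk\phi(\theta)Z$, and the transport--diffusion of the mass fraction $Z$. Given an iterate $(\rho^{k-1},\theta^{k-1},u^{k-1},Z^{k-1})$ in the target regularity class, I would first obtain $\rho^k$ from the transport equation $\rho^k_t + \operatorname{div}(\rho^k u^{k-1}) = 0$ with initial datum $\rho_0$, then compute $\Phi^k$ by Newtonian convolution $\Phi^k(x,t) = -G\int_\Omega \rho^k(z,t)/|x-z|\,dz$, and finally solve the three parabolic equations for $(u^k,\theta^k,Z^k)$ obtained by freezing the convective coefficients at $(\rho^{k-1},u^{k-1})$, the pressure at $R\rho^k\theta^{k-1}$, the viscous heating $Q(\nabla u^{k-1})$, the reactive source $qk\phi(\theta^{k-1})\rho^{k-1}Z^{k-1}$, and the gravity contribution $-\rho^{k-1}\nabla\Phi^{k-1}$. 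Each subproblem is then linear and solvable in the scales required by \eqref{crf}: characteristics for $\rho^k$, Calder\'on--Zygmund for $\nabla^2\Phi^k$, and density-weighted energy methods for the parabolic block. The compatibility condition \eqref{compat cond} is used precisely to initialise the $L^2$ bound of $\sqrt{\rho_0}\,u^k_t(0)$ and $\sqrt{\rho_0}\,\theta^k_t(0)$.

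Second, I would introduce the iteration functional
\[
\mathcal{N}_k(t) = \sup_{0\le s\le t}\Bigl(\|\rho^k\|_{H^1\cap W^{1,q}} + \|(\theta^k,u^k,Z^k)\|_{H^2} + \|\sqrt{\rho^k}(\theta^k_t,u^k_t,Z^k_t)\|_{L^2}\Bigr) + \int_0^t \|(\theta^k,u^k,Z^k)\|_{W^{2,q}}^2\,ds,
\]
and prove by induction that $\mathcal{N}_k(t) \le C_0$ on $[0,T_*]$ for $T_*$ sufficiently small. This relies on transport estimates for $\rho^k$ (controlled through $\|\nabla u^{k-1}\|_{L^\infty}$), on testing the linearised equations against $u^k_t,\theta^k_t,Z^k_t$, on stationary elliptic estimates for $Lu^k$, $-k\Delta\theta^k$ and $-\operatorname{div}(D\nabla Z^k)$ to upgrade to $H^2$ and $W^{2,q}$, and on the Calder\'on--Zygmund bound $\|\nabla^2 \Phi^{k-1}\|_{L^p} \lesssim \|\rho^{k-1}\|_{L^p}$. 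A maximum principle applied to the $Z$ and $\theta$ equations yields $\theta^k\ge 0$ and $0\le Z^k\le 1$, which in turn ensures that $\phi(\theta^k)=(\theta^k)^{1/2}e^{-E/\theta^k}$ is bounded and smooth along the iteration. Subtracting consecutive equations and estimating the differences $(\rho^{k+1}-\rho^k,\theta^{k+1}-\theta^k,u^{k+1}-u^k,Z^{k+1}-Z^k)$ in a weaker norm then produces a Gronwall inequality of the form $\mathcal{E}_{k+1}(t) \le C\int_0^t \mathcal{E}_k(s)\,ds$, yielding a Cauchy sequence whose limit, by lower semicontinuity and the uniform bounds, inherits the regularity \eqref{crf} and solves \eqref{pb1}. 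Uniqueness follows from the same estimate applied to two hypothetical solutions.

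The main technical obstacle, beyond the standard Cho--Kim difficulties arising from the vacuum, is that $f$ and $h$ are not prescribed data of known regularity but depend on the unknowns themselves. Accordingly, the bounds on $(\rho,\theta,u,Z)$, on $\nabla\Phi$, and on the reactive source must be closed \emph{simultaneously} in a strongly coupled Gronwall argument, rather than by treating $f$ and $h$ as forcings of given size. Controlling the gravity through Calder\'on--Zygmund so that $\nabla\Phi$ lies in $W^{1,q}$ whenever $\rho\in W^{1,q}$, exploiting the boundedness and smoothness of the modified Arrhenius $\phi$ on $\{\theta\ge 0\}$, and using $0\le Z\le 1$ are what allow this coupling to be absorbed in the induction; this is, in my view, the decisive technical step of the proof.
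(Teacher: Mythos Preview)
Your proposal follows essentially the same strategy as the paper: a Cho--Kim linearization/iteration scheme, uniform bounds propagated inductively on a short time interval, and convergence via difference estimates in a weaker norm, with uniqueness by the same Gronwall argument. The paper organises the iteration slightly differently: it freezes \emph{only} the convective velocity $v=u^{k-1}$, while keeping $p^k=R\rho^k\theta^k$, $f^k=-\nabla\Phi^k[\rho^k]$ and $h^k=qk\phi(\theta^k)Z^k$ coupled at step $k$; the resulting step-$k$ system is therefore semilinear, and its solvability is obtained by first regularising $\rho_0^\delta=\rho_0+\delta$ to restore parabolicity, deriving $\delta$-independent estimates, and letting $\delta\to 0$. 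Your more aggressive lagging (pressure, gravity and reactive source all taken from the previous iterate) trades a simpler step-wise existence for more terms in the difference estimate; both routes close.

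One caution: your claim that a maximum principle gives $\theta^k\ge 0$ along the iteration is not justified once the pressure-work term is lagged as $R\rho^k\theta^{k-1}\operatorname{div}u^{k-1}$, since this becomes a signed inhomogeneity rather than a zeroth-order term in $\theta^k$. The paper never uses $\theta^k\ge 0$ during the iteration; it relies instead on the fact that the modified Arrhenius law $\phi$, extended by zero on $\{\theta\le 0\}$, is globally bounded together with the combinations $\phi'(\theta)$, $\theta^{-1}\phi(\theta)$, etc., that appear in the energy estimates. You should invoke this global boundedness of $\phi$ directly rather than positivity of the iterates.
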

After having established a local existence result for strong solution,  we investigate the possibility to extend it from a local to a global one and related blow up criteria. 
Theorem \ref{firstblowup} is a generalisation of the blow up criterion for strong solution for the compressible Navier Stokes equations obtained in \cite{cho2004} by Cho and collaborators.  It is a consequence of the regularity estimates obtained in Theorem \ref{Teo2.1} and a contradiction argument.
Our main contribution concerns the estimates needed in order to apply the contradiction argument used in \cite{cho2004} to the temperature $\theta$ and the mass fraction $Z$.
\begin{theorem}
\label{firstblowup}
Let $(\rho, \theta, u, Z)$ be the local strong solution of the initial boundary value problem (\ref{pb1})-(\ref{cc1d}) with regularity (\ref{crf}).  If $T_*$ is the maximal existence time and $T_*<T,$ then $$\limsup_{t \rightarrow T_*}( |\rho(t)|_{H^{1} \cap W^{1,q}}+|u(t)|_{H_{0}^1}+|\theta(t)|_{H_{0}^1}+|Z(t)|_{H_{0}^1})=\infty.$$
\end{theorem}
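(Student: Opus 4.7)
The plan is to argue by contradiction. Suppose the conclusion fails, so that there exists $M>0$ with
$$|\rho(t)|_{H^{1}\cap W^{1,q}}+|u(t)|_{H_{0}^1}+|\theta(t)|_{H_{0}^1}+|Z(t)|_{H_{0}^1}\le M,\qquad t\in[0,T_*).$$
The strategy is to propagate the full regularity class \eqref{crf} uniformly up to $t=T_*$, so that $(\rho,\theta,u,Z)(T_*)$ qualifies as an initial datum for Theorem \ref{Teo2.1}. Invoking that theorem at $T_*$ then produces a strong continuation on some $[T_*,T_*+\delta]$, which contradicts the maximality of $T_*$.

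Two preliminary controls are needed to handle the self-consistent source terms. Elliptic regularity for $\Delta\Phi=4\pi G\rho$ combined with $\|\rho\|_{W^{1,q}}\le M$ gives $\|\nabla\Phi\|_{L^\infty}+\|D^2\Phi\|_{L^q}\lesssim M$, so that $f=-\nabla\Phi$ is bounded in $L^\infty_tL^\infty_x$ and $f_t$ is controlled via the continuity equation $\rho_t=-\operatorname{div}(\rho u)$. For $h=qk\phi(\theta)Z$, the smoothness of $\phi$ on $[0,\infty)$ together with Sobolev embeddings turns the $H^1_0$-control of $(\theta,Z)$ into Lebesgue control of $h$. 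One then follows the scheme of \cite{cho2004,Cho 2006 bis}: differentiate the momentum, energy and mass-fraction equations in $t$, test against $u_t,\theta_t,Z_t$ respectively, and absorb the principal dissipations $\mu|\nabla u_t|_{L^2}^2$, $k|\nabla\theta_t|_{L^2}^2$, $D|\nabla Z_t|_{L^2}^2$ into the left-hand side. All remaining terms are then bounded by $M$, by the $L^\infty_t H^2_x$-norms of $(u,\theta,Z)$, and by
$$\Psi(t):=|\sqrt\rho\,u_t(t)|_{L^2}^2+|\sqrt\rho\,\theta_t(t)|_{L^2}^2+|\sqrt\rho\,Z_t(t)|_{L^2}^2.$$
The compatibility condition \eqref{compat cond} bounds $\Psi(0)$, and Gronwall's lemma yields $\Psi\in L^\infty(0,T_*)$ together with $(\nabla u_t,\nabla\theta_t,\nabla Z_t)\in L^2(0,T_*;L^2)$.

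Viewing the momentum equation as the elliptic Lam\'e system $Lu=-\nabla p+\rho(f-u_t-u\cdot\nabla u)$, the energy equation as $-k\Delta\theta=Q(\nabla u)+\rho h-c_v\rho(\theta_t+u\cdot\nabla\theta)-p\operatorname{div}u$, and analogously for the $Z$-equation, elliptic regularity with Dirichlet data upgrades $(u,\theta,Z)$ to $L^\infty(0,T_*;H^2)\cap L^2(0,T_*;W^{2,q})$. At $t=T_*$ the limit values inherit the regularity required by Theorem \ref{Teo2.1}, and the compatibility identities at $T_*$ are obtained by extracting weak $L^2$-limits, along a subsequence $t_k\uparrow T_*$, of $\rho(t_k)^{-1/2}$ times the right-hand sides of the spatial equations, uniformly $L^2$-bounded thanks to $\sqrt\rho u_t,\sqrt\rho\theta_t\in L^\infty_tL^2_x$. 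Applying Theorem \ref{Teo2.1} at $T_*$ then yields the desired extension. The main obstacle is closing the Gronwall inequality for $\Psi$: the self-consistent character of $f$ and $h$ couples all three time-differentiated equations, and the term $h_t=qk(\phi'(\theta)\theta_tZ+\phi(\theta)Z_t)$ in particular forces a simultaneous bootstrap of the $H^2$-bound on $\theta$ together with the $\Psi$-estimate, since an $L^\infty_x$-control of $\theta$ (through $H^2\hookrightarrow L^\infty$ in three dimensions) is needed in order to absorb these nonlinear interactions into the parabolic dissipation on the left-hand side.
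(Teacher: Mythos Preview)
Your proposal is correct and follows essentially the same contradiction scheme as the paper: assume the lower-order quantity stays bounded, propagate the full regularity class \eqref{crf} up to $T_*$ via time-differentiation, testing, Gronwall on $\Psi$, and elliptic upgrades, then relaunch Theorem~\ref{Teo2.1}. The paper packages this slightly differently, introducing $J(t)$ (the full regularity norm) and $\phi(t)$ (the lower-order quantity in the statement) and deriving a functional inequality $J(t)\le cJ(\tau)\,H(\phi)\exp(cT_*\sup H(\phi))$; the contradiction is then read off from the fact that $J$ must blow up at a maximal time, forcing $\phi$ to do so as well---but the analytic content (including the handling of the self-consistent $f,h$ and the simultaneous bootstrap of $H^2$ with $\Psi$ that you flag) is the same.
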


Finally, we prove a more refined blow up criterion of Beale Kato Majda type, generalizing \cite{fan}. Indeed it turns out that $\nabla u$ is one of the responsible of the breakdown of smooth solutions. 
This result shows a relevant difference concerning the blow up analysis for compressible and incompressible fluids. While in the incompressible case the smoothness of classical solutions is controlled by $\omega=\operatorname{curl}u,$ the antisymmetric part of $\nabla u$, in the compressible case we must take into account also the contribution due to its symmetric part. 
In order to apply the contradiction argument used in \cite{fan}, we modify their blow up result by adding $\| \nabla Z\|_{L^1(0, T; L^\infty)}$ which is needed in order to get more regularity on the mass fraction $Z$  and allow us to extend the solution up to the maximal existence time $T_*$ and to get our contradiction, see Lemma \ref{Lemma 2.8 cap4},  Lemma \ref{Lemma 2.9 cap4}.
For this last result we will consider the framework of \cite{fan} where they required the following assumption on the shear and bulk viscosity coefficients: 
\begin{equation}\label{visk2}
\mu>\frac{1}{7}\lambda
\end{equation}
which is needed in order to have an higher order estimate on the velocity $u$,  see Lemma \ref{Lemma 2.3 cap4}.
\begin{theorem}
\label{scndblowup}
Let $(\rho, \theta, u, Z)$ be the local strong solution of the initial boundary value problem (\ref{pb1})-(\ref{cc1d}) with regularity (\ref{crf}).  If $T_*$ is the maximal existence time and $T_*<T,$ then 
\begin{equation}
\lim_{T \rightarrow T_*} (\|\theta\|_{L^\infty(0, T, L^\infty)}+\| \nabla u\|_{L^1(0, T; L^\infty)}+\| \nabla Z\|_{L^1(0, T; L^\infty)} =\infty 
\end{equation}
provided that (\ref{visk2}) is satisfied.
\end{theorem}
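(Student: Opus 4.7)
The strategy is a contradiction argument in the spirit of \cite{fan}, adapted to incorporate the mass-fraction equation and the self-gravitating coupling. Assume for contradiction that
$$M_0 := \|\theta\|_{L^\infty(0,T_*;L^\infty)} + \|\nabla u\|_{L^1(0,T_*;L^\infty)} + \|\nabla Z\|_{L^1(0,T_*;L^\infty)} < \infty.$$
By Theorem \ref{firstblowup} it suffices to produce a uniform bound on $|\rho(t)|_{H^1\cap W^{1,q}}+|u(t)|_{H_0^1}+|\theta(t)|_{H_0^1}+|Z(t)|_{H_0^1}$ as $t\to T_*^-$: once this is done, Theorem \ref{Teo2.1} can be invoked with initial data at a time close to $T_*$ to extend the strong solution past $T_*$, contradicting the maximality.

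First I would extract an $L^\infty$ bound on $\rho$ from the continuity equation along characteristics, using only $\|\operatorname{div} u\|_{L^1(0,t;L^\infty)}\le M_0$. The $L^\infty$ bound $0\le Z\le 1$ is then preserved by the reaction--diffusion equation for $Z$, while elliptic regularity for $\Delta\Phi=4\pi G\rho$ controls $\nabla\Phi$ in terms of the already-bounded $\rho$, rendering the external force $f=-\nabla\Phi$ a harmless lower-order term. The $H^1\cap W^{1,q}$ regularity of $\rho$ is then propagated by differentiating the continuity equation and running a standard Gronwall argument driven by $\|\nabla u\|_{L^\infty}\in L^1_t$.

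The core of the argument is the higher-order estimates encapsulated in Lemma \ref{Lemma 2.3 cap4} for the velocity and Lemma \ref{Lemma 2.8 cap4}, Lemma \ref{Lemma 2.9 cap4} for the temperature and mass fraction. These are obtained by differentiating the momentum, energy and $Z$-equations in time, testing with $u_t$, $\theta_t$, $Z_t$, and invoking elliptic regularity for the Lamé system $Lu+\nabla p=\rho(f-u_t-u\cdot\nabla u)$ and for the Laplacian acting on $\theta$ and $Z$. The hypothesis $\mu>\lambda/7$ enters precisely at the level of the velocity estimate, as it is the threshold that allows the pressure term $\nabla p=R\nabla(\rho\theta)$ and the convective term $\rho u\cdot\nabla u$ to be absorbed into the coercive part of the Lamé operator, closing a Gronwall inequality for $\|\sqrt{\rho}\,u_t\|_{L^2}^2+\|\nabla u\|_{H^1}^2$. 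The reactive source $\rho h=qk\rho\phi(\theta)Z$ and the coupling $k\rho\phi(\theta)Z$ in the $Z$-equation are tame once $\theta\in L^\infty$ is known, because $\phi(\theta)=\theta^{1/2}e^{-E/\theta}$ and its derivatives are uniformly bounded on the controlled range of $\theta$ (and vanish smoothly as $\theta\to 0^+$).

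I expect the main obstacle to be twofold. First, the coupled estimate on $\nabla Z$ and the temperature through $\phi(\theta)Z$: this is exactly the reason why the term $\|\nabla Z\|_{L^1(0,T;L^\infty)}$ has been added to the blow-up norm compared to \cite{fan}, as without it one cannot transfer regularity between the $Z$- and $\theta$-equations in the higher-order step. Second, the self-gravitational contribution $-\rho\nabla\Phi$ must be controlled entirely through the density via the representation formula for $\Phi$, which forces a delicate use of the $L^\infty$-bound on $\rho$ inside the higher-order estimate for $u$. Combining all the estimates and applying a final Gronwall inequality yields the uniform $H^1\cap W^{1,q}\times H_0^1\times H_0^1\times H_0^1$ bound, contradicting Theorem \ref{firstblowup} and completing the proof.
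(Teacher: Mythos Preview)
Your overall architecture---assume the quantity is bounded, derive uniform a~priori estimates in the regularity class of Theorem~\ref{Teo2.1}, and contradict the maximality of $T_*$---is exactly the route the paper takes, and your reduction via Theorem~\ref{firstblowup} is legitimate. However, several of the mechanisms you describe are mischaracterized in ways that would make the proof fail if carried out literally.

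First, the condition $7\mu>\lambda$ does \emph{not} enter by allowing the pressure and convective terms to be absorbed into the coercive part of the Lam\'e operator. It enters through Hoff's argument (Lemma~\ref{Lemma 2.3 cap4}): one tests the momentum equation with $|u|^{1+\delta}u$ for small $\delta>0$, and the threshold $7\mu>\lambda$ is precisely what permits the pressure term $\int p\,\operatorname{div}(|u|^{1+\delta}u)$ to be controlled, yielding $\sup_t\int\rho|u|^{3+\delta}\,dx\le C$. This moment bound is then the input for the $H^1$ estimate on $\rho$ and $u$ in Lemma~\ref{Lemma 2.4 cap4}. Second, your claim that the $W^{1,q}$ regularity of $\rho$ follows from a Gronwall argument driven by $\|\nabla u\|_{L^\infty}\in L^1_t$ alone is incorrect: differentiating the continuity equation produces a term $\rho\,\nabla\operatorname{div}u$, so one needs $\|u\|_{W^{2,q}}\in L^2_t$, which only becomes available late in the chain (Lemma~\ref{Lemma 2.7 cap4}) after the higher-order $u$ and $\theta$ estimates. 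Third, you skip the preliminary entropy-type step (Lemma~\ref{Lemma 2.1 cap4} together with Lemma~\ref{Lemma 2.2 cap4}): one first shows $\theta>0$ via a maximum-principle argument, then integrates the equation for $\log\theta$ to obtain the basic dissipation bound $\int_0^T\!\!\int_\Omega(|\nabla u|^2+|\nabla\theta|^2)\,dx\,dt\le C$, without which the subsequent bootstrapping cannot start. Finally, Lemmas~\ref{Lemma 2.8 cap4} and~\ref{Lemma 2.9 cap4} concern only $Z$; the temperature estimates are in Lemmas~\ref{Lemma:2.5_cap4} and~\ref{Lemma 2.6 cap4}.
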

The outline of this paper is as follows.  In Section 2 we prove the well posedness of the linearized system of \eqref{pb1}.  Section 3 is dedicated to the proof of Theorem \ref{Teo2.1},  we define an inductive scheme and we construct an  approximating solution and prove the convergence to the solution of the nonlinear problem.  In Section 4 we perform the blow up analysis of the local strong solution.

\section{A priori estimates for the linearized system}
This section is devoted to the study of the local well posedness for the linearized system of \eqref{pb1}.   We first consider the case where the initial density is away from the vacuum,  then we prove the existence of strong solutions and we deduce some a priori estimates which are independent on the lower bound of the density $\delta.$ Finally we perform the limit $\delta\rightarrow 0 $ and we get the existence of the solution of the linearized problem in the vacuum case and some useful a priori estimates.  Uniqueness is also proved by a standard contradiction argument.

\subsection{Linearized system}
We consider the following linearized system of \eqref{pb1} in $ (0, T)\times \Omega $

\begin{equation}\label{densityl}
\rho_t+ \operatorname{div}(\rho v)=0
\end{equation}
\begin{equation}\label{momentuml}
(\rho u)_{t}+ \operatorname{div} (\rho v \otimes u)+ Lu+ \nabla p= \rho f
\end{equation}
\begin{equation}\label{temperaturel}
c_v((\rho \theta)_t+\operatorname{div}(\rho \theta v))-k\Delta \theta+ p\operatorname{div}v=Q(\nabla v)+ \rho h 
\end{equation}
\begin{equation}\label{Zl}
(\rho Z)_{t}+\operatorname{div}(\rho vZ))=-k \phi (\theta) \rho Z +\operatorname{div}(D \nabla Z)
\end{equation}
\begin{equation} \label{poissonl}
\Delta \Phi=4\pi G  \rho
\end{equation}
with initial data 
\begin{equation}\label{ic1l}
(\rho, \theta, u, Z)_{|{t=0}}=(\rho_0, \theta_0, u_0, Z_0) \quad in \quad \Omega
\end{equation}
and boundary conditions
\begin{equation}\label{bcl}
(\theta, u,Z)=(0,0,0) \quad on\quad (0, T)\times \partial\Omega
\end{equation}
\\
where $t\geq 0$, $x\in \Omega\subset\mathbb{R}^{3}$ and $v$ is a known vector field whose regularity will be defined later on. 
We start by solving the linear transport equation \eqref{densityl} by the characteristic method.  

\begin{lemma}
\label{lemma6}
Assume that $ \rho_0 \ge 0, \quad \rho_0 \in C_0 \quad \text{and}\quad  v\in L^\infty(0,T;H_0 ^1 \cap H^2) \cap L^2 (0,T;W^{2,q})$ with $q\in (3,6].$
Then there exists a unique weak solution $\rho \in  C ([0, T ]; C_0 ) $ to the linear hyperbolic problem \eqref{densityl},  \eqref{ic1l}.  Moreover the solution $\rho$  can be represented by
\begin{equation}\label{represden}
\rho(t, x)=\rho_0(U(0, t, x))\exp{ [-\int_{0}^{t} \operatorname{div}v(s, U(s, t, x))ds]}
\end{equation}
where $U\in  C([0, T ]\times  [0, T ]\times \Omega)$ is the solution to the initial value problem
\begin{equation}
\begin{cases}
U_t( s, t, x)= v(t, U(s, t, x)), \quad  0\le t\le T \\
U( s, s, x)= x, \quad 0\le s\le T
\end{cases}
\end{equation}
Assume in addition that $\rho_0 \in W^{1,r}$ for
some $r$ with $2\le r\le q.$ Then we also have $
\rho \in C([0,T];W^{1,r}) $ and  $\rho_t \in L^\infty( 0,T;L^r).$
\end{lemma}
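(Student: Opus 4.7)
The plan is to construct $\rho$ via the method of characteristics and then read off the stated regularity directly from the representation formula \eqref{represden}.

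First I would build the flow $U(s,t,x)$. Since $q>3$, Morrey's embedding gives $W^{1,q}\hookrightarrow L^\infty$, hence $\nabla v\in L^2(0,T;L^\infty)$, so $v(t,\cdot)$ is Lipschitz for almost every $t$ with Lipschitz constant in $L^2(0,T)$. A Carath\'eodory variant of Picard--Lindel\"of (Gronwall with integrable Lipschitz constant) yields a unique flow $U\in C([0,T]\times[0,T]\times\overline\Omega)$. Because $v|_{\partial\Omega}=0$ (using $v(t)\in H^2\cap H_0^1\hookrightarrow C^{0,\alpha}$ to make the trace classical) the boundary is invariant under the flow, so each $U(s,t,\cdot)$ maps $\overline\Omega$ to itself homeomorphically.

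Next I define $\rho$ by \eqref{represden}. Differentiation along trajectories gives $\rho_t+v\cdot\nabla\rho+\rho\operatorname{div}v=0$ in the classical sense when the data are smooth; for general data this passes to the distributional level by a standard mollification of $v$ and $\rho_0$. The joint continuity of $U$ together with the fact that $\operatorname{div}v$, after integration in time, yields a continuous function of $(t,x)$, gives $\rho\in C([0,T];C_0(\overline\Omega))$; vanishing at the boundary follows because $x\to\partial\Omega$ forces $U(0,t,x)\to\partial\Omega$ by continuity and invariance, and $\rho_0$ vanishes there. For uniqueness, if $w=\rho_1-\rho_2$ solves the transport equation with zero initial data, the energy estimate $\tfrac{d}{dt}\|w\|_{L^2}^2\le\|\operatorname{div}v\|_{L^\infty}\|w\|_{L^2}^2$ (legitimate since $\nabla v\in L^2(0,T;L^\infty)$, so the DiPerna--Lions renormalization is trivial) followed by Gronwall forces $w\equiv 0$.

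For the additional $W^{1,r}$ regularity I would differentiate \eqref{represden} in $x$. The resulting expression involves the spatial Jacobian $\nabla_x U$, which by Gronwall applied to the variational ODE is bounded pointwise by $\exp\!\bigl(\int_0^T\|\nabla v(\tau)\|_{L^\infty}d\tau\bigr)$. Combining this with the pointwise bound on $\rho$, the fact that $y=U(s,t,\cdot)$ is an admissible change of variables (Jacobian bounded above and below), and that $\nabla^2 v\in L^2(0,T;L^q)\subset L^2(0,T;L^r)$ since $r\le q$, I obtain
\begin{equation*}
\|\nabla\rho(t)\|_{L^r}\lesssim \|\nabla\rho_0\|_{L^r}+\|\rho\|_{L^\infty_{t,x}}\int_0^t\|\nabla^2 v(s)\|_{L^r}\,ds,
\end{equation*}
uniformly in $t\in[0,T]$, with continuity in $t$ following by dominated convergence term by term in the formula. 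Finally, $\rho_t\in L^\infty(0,T;L^r)$ is read off the equation $\rho_t=-v\cdot\nabla\rho-\rho\operatorname{div}v$, using $v\in L^\infty(0,T;H^2)\hookrightarrow L^\infty(0,T;L^\infty\cap W^{1,6})$ to control both factors.

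The main obstacle is bookkeeping: the structural content (characteristics, representation formula, differentiation in $x$) is classical, but one must verify at each step that the regularity $v\in L^\infty(0,T;H^2)\cap L^2(0,T;W^{2,q})$ with $q>3$ suffices to (i) obtain a genuine Lipschitz flow preserving $\overline\Omega$, and (ii) differentiate \eqref{represden} under the integral while retaining $L^r$-integrability of the result. The key observation that makes this work is that $W^{2,q}\hookrightarrow C^{1,\alpha}$ for $q>3$, giving pointwise $L^\infty$ Lipschitz bounds on $v$ with $L^2$-in-time control—exactly the threshold at which the characteristic method is rigorous.
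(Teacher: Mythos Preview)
Your proposal is correct and follows the same characteristic-method approach that the paper invokes: the paper does not supply an independent argument but simply refers to \cite{Cho 2006 bis}, where the lemma is proved precisely by constructing the flow $U$, verifying the representation formula \eqref{represden}, and then differentiating it to obtain the $W^{1,r}$ bound. Your sketch matches that argument in structure and in the key technical point---that $v\in L^2(0,T;W^{2,q})$ with $q>3$ gives an $L^2$-in-time Lipschitz constant for $v$, which is exactly what is needed for the Carath\'eodory ODE and for the Gronwall control of $\nabla_x U$.
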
 
A detailed proof of  Lemma \ref{lemma6} can be found in \cite{Cho 2006 bis}.
\\
From now on we assume the following regularity:
\begin{equation}\label{reglin}
\begin{split}
 \rho_0 \ge 0, \rho_0 \in W^{1,r} \cap W^{1,q},  \quad 0\le Z_0\le 1 \quad (\theta_0,u_0, Z_0)\in H_0^1 \cap H^2,  
  \\ v \in C([0,T];H_0^1 \cap H^2)\cap L^2(0,T;W^{2,q}), v_t \in L^2(0,T;H_0^1) 
\end{split}
\end{equation}
for some constants $q$ and $r$ such that $ 2 \le r \le 3 < q \le 6. $
Then the global existence of a unique strong solution $(\rho , \theta, u, Z)$ to the linearized problem \eqref{densityl}-\eqref{bcl} can be proved by standard methods at least for the case that $\rho_0$ is bounded away from zero.
Hence,  in order to prove the existence of the solution for the linearized system,  we will first consider the case $\rho_0\ge \delta>0.$ Then we recover some estimates of the solution $(\rho^\delta,  \theta^\delta, u^\delta, Z^\delta)$ which are independent from the size of the domain $\Omega$ and from $\delta.$ 
We will get the existence of the solutions of the original linearized problem by the limit $\delta \rightarrow {0}.$
\\
The next Lemma guarantees the existence of solution of the linearized system in the case of a positive lower bound for the initial density.

\begin{lemma}
\label{lemma7}

Assume in addition to (\ref{reglin}) that $\rho_0\ge \delta$ in $\Omega$ for some constant $\delta > 0. $ Then there exists a unique strong solution $ (\rho , \theta,  u, Z)$ to the initial boundary value problem \eqref{densityl}–\eqref{bcl} such that:
\begin{equation}
\begin{split}
& \rho \in C([0,T];W^{1,r}\cap W^{1,q}) , \rho_t \in C([0,T];L^r \cap L^q),  \\ & (\theta, u,Z)\in C([0,T];H_0^1 \cap H^2)\cap L^2(0,T;W^{2,q}), \\ & (\theta_t,u_t, Z_t) \in C([0,T];L^2)\cap L^2(0,T;H_0^1),  \\ & (e_{tt},u_{tt}, Z_{tt})\in L^2(0,T;H^{−1}),  \\ & \rho \ge \delta\quad  [0,T] \times \Omega \quad  \delta >0, \quad 0 \leq Z \leq 1.
\end{split}
\end{equation}
\end{lemma}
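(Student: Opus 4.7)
Since the system \eqref{densityl}--\eqref{poissonl} is already linear in the unknowns once the field $v$ is frozen, the plan is to solve the equations in cascade and handle the residual coupling between $(u,\theta,Z)$ by a short-time fixed-point argument. First, I would apply Lemma \ref{lemma6} to the transport equation \eqref{densityl} to obtain $\rho \in C([0,T]; W^{1,r}\cap W^{1,q})$ with $\rho_t\in C([0,T]; L^r\cap L^q)$. The representation formula \eqref{represden} together with $\rho_0\geq\delta$ gives the uniform lower bound
\[
\rho(t,x)\geq \delta\exp\bigl(-T^{1/2}\|\operatorname{div} v\|_{L^2(0,T;L^\infty)}\bigr)>0
\]
on $[0,T]\times\Omega$, so every subsequent parabolic problem will be uniformly parabolic. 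Next, solve the Poisson equation \eqref{poissonl} by elliptic $L^p$-theory to recover $\Phi$ and hence $f=-\nabla\Phi$, with regularity inherited from $\rho$.

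The structural observation for the remaining unknowns is that the momentum equation couples to $\theta$ only through $\nabla p = R\nabla(\rho\theta)$, while the temperature and mass-fraction equations do not involve $u$ at all, since the convection is by the known $v$. Thus the problem reduces to solving the coupled subsystem \eqref{temperaturel}--\eqref{Zl} for $(\theta,Z)$, then solving \eqref{momentuml} for $u$ with $\theta$ plugged into the pressure. For $(\theta,Z)$ I would set up a Banach contraction on a small interval $[0,T']$ in the space $X=(C([0,T'];H^1_0)\cap L^2(0,T';H^2))^2$: given iterates $(\tilde\theta,\tilde Z)$, freeze the Arrhenius sources $\pm qk\rho\phi(\tilde\theta)\tilde Z$ and solve the two uncoupled linear parabolic equations by a Galerkin scheme in the Dirichlet eigenbasis of $-\Delta$. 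Contraction on small time follows from the fact that the modified Arrhenius function $\phi(s)=s^{1/2}e^{-E/s}\chi_{\{s>0\}}$ is globally Lipschitz, so that products of the form $\phi(\tilde\theta^1)\tilde Z^1-\phi(\tilde\theta^2)\tilde Z^2$ are controlled in $L^2$ by the $X$-norm of the differences. A standard continuation argument, using the linear a priori estimates which are global in time, extends the solution to the full interval $[0,T]$. Once $(\theta,Z)$ is in hand, \eqref{momentuml} is a uniformly parabolic linear system for $u$ with right-hand side $\rho f-\nabla(R\rho\theta)\in L^2(0,T;L^2)$, solved again by Galerkin.

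For the claimed regularity, since $\rho_0\geq \delta>0$ the compatibility condition is automatically satisfied, so the time derivatives $u_t,\theta_t,Z_t$ at $t=0$ can be read off from the equations and belong to $L^2$. Differentiating each equation in time and testing appropriately yields the $C([0,T];L^2)\cap L^2(0,T;H^1_0)$ bounds on $(u_t,\theta_t,Z_t)$ and the $L^2(0,T;H^{-1})$ bounds on the second time derivatives. The $L^2(0,T;W^{2,q})$ spatial regularity is recovered by interpreting $Lu$, $-k\Delta\theta$, $-D\Delta Z$ as elliptic source equations at almost every fixed time and invoking $W^{2,q}$-elliptic regularity, using $\rho\in W^{1,q}$ and the bounds on $\rho_t$. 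The two-sided bound $0\leq Z\leq 1$ is obtained by testing \eqref{Zl} against the truncations $-Z^{-}$ and $(Z-1)^+$, exploiting $\phi(\theta)\geq 0$, $\rho\geq\delta'>0$ and Gronwall. Uniqueness is a standard energy estimate on the difference of two solutions. The main obstacle I anticipate is coordinating the various regularity indices when bootstrapping from the natural $L^\infty H^1\cap L^2 H^2$ bounds up to the anisotropic $L^2 W^{2,q}$ spatial class: this requires checking that each nonlinear product — notably $\rho\theta$ in the pressure and $\phi(\theta)Z$ in the Arrhenius term — lies in $L^2(0,T;W^{1,q})$, which follows from the Sobolev embedding $H^2\hookrightarrow L^\infty$ in three dimensions together with $\rho\in L^\infty(0,T;W^{1,q})$.
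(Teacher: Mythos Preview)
Your proposal is correct and follows the same skeleton as the paper: solve \eqref{densityl} by Lemma~\ref{lemma6}, use the representation formula to propagate the lower bound $\rho\geq\delta'>0$, then divide the remaining equations by $\rho$ to reduce to uniformly parabolic problems solved by Galerkin. The paper's proof is in fact terser than yours: it simply declares the resulting equations for $\theta$, $u$, $Z$ to be ``linear parabolic'' and invokes standard theory, without explicitly addressing the semilinear coupling of $(\theta,Z)$ through the Arrhenius terms $h=qk\phi(\theta)Z$ and $-k\phi(\theta)\rho Z$. Your Banach fixed-point treatment of this coupling is a legitimate and more transparent way to close the argument; since $\phi$ is globally Lipschitz (as you note), the contraction and continuation work exactly as you outline. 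The paper presumably intends either a joint Galerkin scheme for the semilinear $(\theta,Z)$ system or the same fixed-point idea, but leaves this implicit. Your additional remarks on the $W^{2,q}$ bootstrap, the truncation argument for $0\le Z\le 1$, and the energy uniqueness are all sound and go beyond what the paper spells out at this stage.
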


\begin{proof}

The existence of a solution $\rho$ for the linear transport equation \eqref{densityl} is guaranteed by Lemma \ref{lemma6}.  Moreover,  by recalling the representation formula \eqref{represden} we have the following inequality:
\begin{equation}
\rho(t, x)=\inf \limits_{\Omega} \rho_0\exp{ [- C \int_{0}^{t} \|\nabla v\|_{W^{1,r}} ds]} \ge \delta >0,
\end{equation}
for $(t , x )\in  [0, T ] \times \overline{\Omega}$ from which we deduce that if we start with a positive initial density,  then $\rho$ remains positive for all times $t > 0 .$
This allows us to divide the equations \eqref{momentuml}-\eqref{temperaturel}-\eqref{Zl} by $\rho$ and we can rewrite them in the following way:

$$ \theta_t +v \cdot \nabla \theta +R\theta \operatorname{div}v−k\rho^{−1} \Delta \theta=\rho^{−1} Q(\nabla v)+h,$$

$$u_t +v\cdot \nabla u+\rho^{−1} Lu=f −R\rho^{−1} \nabla (\rho \theta),$$

$$Z_t+ v \cdot \nabla Z=-K \phi(\theta)Z+ \rho^{-1}D\Delta Z,$$
where the first and third equations are two linear parabolic equations,  while the second equation is a linear parabolic system for the velocity vector field $u.$
The proof of the existence of the solution for this problem can be proved by using a standard Galerkin approximation technique or with the method of continuity.
\end{proof}
Now we compute some a priori estimates independent of $\delta$ for the solutions of the linearized system.  
\\
We assume that the initial data satisfy \eqref{reglin} and we chose any fixed $c_0$ so that,
\begin{equation}\label{c0}
 c_0 \ge 1+ \| \rho_0\|_{H^1 \cap W^{1, q}}+ \| (\theta_0,  u_0, Z_0)\|_{H^1_{0} \cap H^2}+ \|( g_1, g_2)\|^2_{L^2},
 \end{equation}
where $3 < q \le 6,$ $g_1= {\rho_0}^{-\frac{1}{2}}(-k \Delta \theta_0- Q(\nabla{u_0}),$ $g_2= {\rho_0}^{-\frac{1}{2}}(L{u_0}+ \nabla {p_0}),$ 
$p_0=R\rho_0\theta_0.$
\\
Regarding the vector field $v$ we assume that $v(0)=u_0$ and
\begin{equation}\label{regularity v}
v \in C([0,T];H_0^1 \cap H^2)\cap L^2(0,T;W^{2,q}), v_t \in L^2(0,T;H_0^1).
\end{equation}
\subsection{Estimates for $\rho$}
We recall from \cite{Cho 2006 bis} that
\begin{equation}\label{repc0}
\|\rho(t)\|_{W^{1,r}\cap W^{1,q}} \le Cc_0\exp{ [C \int_{0}^{t} \|\nabla v\|_{H^1 \cap {W^{1,q}}} ds}]
\end{equation}
for $0\le t \le T_*.$
By using H\"older inequality we get:
$$\int_{0}^{t} \|\nabla v\|_{H^1 \cap {W^{1,q}}} ds \le t^\frac{1}{2}[\int_{0}^{t} {\|\nabla v\|}^2_{H^1 \cap {W^{1,q}}} ds]^{\frac{1}{2}} \le C(c_2t+(Cc_2)^\frac{1}{2}).$$
By \eqref{repc0} and \eqref{densityl} we end up with the following estimates for the density:
\begin{equation}\label{RHO W1Q}
\|\rho(t)\|_{W^{1,r}\cap W^{1, q}} \le Cc_0,
\end{equation}
\begin{equation}\label{RHO L3}
\| \rho_t(t)\|_{L^r\cap L^q}\le Cc^2_2
\end{equation}
for $ 0\le t \le \min( T_*, T_1),$ where $T_1=c^{-1}_{2}<1.$ Thanks to these estimates the density $\rho$ satisfies the following bounds:
\begin{equation}\label{limitrho}
C^{-1}\delta\le \rho(t,x)\le Cc_0,
\end{equation}
$ 0\le t \le \min( T_*, T_1),$ $x \in \overline{\Omega}.$
\subsection{Estimates for  Z}
Since $v$ is fixed,  in the equation \eqref{Zl} of the mass fraction $Z$ we apply the  parabolic regularity.  Since $0 \le Z_0 \le 1$ then, for all times $t$,  $Z_0 \in L^\infty$ and $$Z\in L^\infty ((0, T)\times \Omega).$$
We multiply  \eqref{Zl} by $Z$ and integrate over $\Omega$ and we get
\begin{equation}\label{Z1}
\dfrac{d}{dt} \int_{\Omega} \dfrac{1}{2}\rho Z^2 dx+ D \int_{\Omega} |\nabla Z|^2 dx=-k \int_{\Omega} \rho \phi(\theta) Z^2.
\end{equation}
Since the right hand side of \eqref{Z1} is non positive,  we get 

$$\dfrac{1}{2} \dfrac{d}{dt}{{\| \sqrt{\rho} Z \|}^2}_{L^2}+ D \int_{\Omega} {\| \nabla Z \|}^2 dx \le 0$$ 
from which we get 
\begin{equation}\sqrt{\rho}Z \in L^p(0, T; L^2(\Omega))  \ \ \ \text{ for any} \ p \ge 1 ,
\end{equation}
  \\ or equivalentely 
\begin{equation}\label{regularity for Z L2}
Z\in L^p(0, T; L^2(\Omega))  \ \ \ \text{ for any} \ p \ge 1 .
\end{equation}
 \\
Integrating \eqref{Z1} with respect to time we get:

\begin{equation}
Z\in L^2(0, T; H^1_0(\Omega)).
\end{equation}
\\
Then if we take the equation \eqref{Zl} and multiply by $Z_t$ we get:

\begin{equation}\label{Z2}
\begin{split}
& \dfrac{d}{dt}\int_{\Omega} \dfrac{D}{2}|\nabla Z|^2+\int_{\Omega} \rho |Z_t|^2  \\  & \le \int_{\Omega} |Z_t|\rho |v| |\nabla Z|+ \int_{\Omega}|Z_t| k \rho \phi(\theta)Z
\end{split}
\end{equation}
The first integral in the right hand side can be estimated in the following way,  
\begin{equation}\label{Z uno}
\begin{split}
& \int_{\Omega} |Z_t|\rho v |\nabla Z| \le C_1{\|\sqrt{\rho}Z_t\|}^2_{L^2}+C_2\| \nabla Z\|^2_{L^2}.
\end{split}
\end{equation}
In the second integral in the right hand side of  \eqref{Z2} we have:
\begin{equation}\label{Z due}
\int_{\Omega}|Z_t| k \rho \phi(\theta)Z \le C_1 \| \sqrt{\rho}Z_t\|^2_{L^2}+ C_2 \| \nabla Z \|^2_{L^2}.
\end{equation}
Summing up \eqref{Z uno} and \eqref{Z due} and by using Poincarè inequality we get
\begin{equation}\label{intZ2}
\dfrac{D}{2}\dfrac{d}{dt}\| \nabla Z\|^2_{L^2}+C_1\|\sqrt{\rho}Z_t\|^2_{L^2} \le C_2 \| \nabla Z\|^2_{L^2},
\end{equation}
  with $C_1,C_2$ positive constants.
By using Gronwall's lemma we have 
\begin{equation}\label{rgZ}
\nabla Z \in L^p (0, T; L^2)  \ \ \ \text{ for any} \ p \ge 1 .
\end{equation}
Moreover, if we integrate \eqref{intZ2} with respect to time we have: $$\dfrac{D}{2}\| \nabla Z(T)\|^2_{L^2}+C_1 \int_{0}^{T} \|\sqrt{\rho}Z_t\|^2_{L^2} \le C_2 \int_{0}^{T}  \| \nabla Z\|^2_{L^2}+\| \nabla Z_0\|^2_{L^2}$$ from which $$\sqrt{\rho}Z_t \in L^2(0, T); L^2),$$ or equivalentely by \eqref{limitrho} 
\begin{equation}\label{rZt}
Z_t \in L^2((0, T); L^2).
\end{equation}
Then, if we take the equation \eqref{Zl} and we integrate over $\Omega$ we have 
\begin{equation}\label{Z3}
\dfrac{d}{dt}\int_{\Omega} \rho Z=-k\int_{\Omega} \phi(\theta)\rho Z \le 0
\end{equation}
hence 
\begin{equation}\label{rZ}
Z\in L^{p}(0, T; L^1) \ \ \ \text{ for any} \ p \ge 1 ,
\end{equation}
\\
Now,  similarly to \cite{Donatelli}  we look for an estimate for $\Delta Z.$  
From equation \eqref{Zl}  it follows:
 $$\int_{\Omega}\Delta Z\le C\int_{\Omega} Z_t+ \nabla Z+ Z.$$ 
 and by considering \eqref{rgZ},\eqref{rZt} and \eqref{rZ} we get 
 $$\Delta Z \in L^2((0, T); L^2),$$ and
 \begin{equation}\label{morl}
 Z \in L^2(0, T); H^2).
 \end{equation}
 
 \subsection{Estimates for $\theta$ and $p$}
We start by deriving some estimates for the temperature $\theta,$ then the estimates for the pressure $p$ follow because of the constitutive law $p=R\rho\theta.$
\\
We apply$\dfrac{\partial{}}{\partial{t}}$ to the equation \eqref{temperaturel},  we multiply by $\theta_t$ and integrate in $\Omega,$
and we have 
\begin{equation}\label{45}
\begin{split}
& \dfrac{1}{2} \dfrac{d}{dt} \int_{\Omega} \rho |\theta_t|^2 dx+ k\int_{\Omega}  | \nabla \theta_t|^2dx  \\ & \le C \int_{\Omega}(| \rho_t | | v | |\nabla \theta| | \theta_t| + \rho |v_t | | \nabla {\theta}| | \theta_t|+ \rho |v| | \nabla {\theta_t}| | \theta_t|  + |p_t| | \nabla v| |\theta_t|  \\ & + \rho |\theta| | \nabla {v_t}| |\theta_t|+ | \nabla v| |\nabla {v_t}| |\theta_t|+| \rho_t| |h| |\theta_t|) dx + \left \langle h_t,\rho \theta_t \right  \rangle =\sum_{j=1}^8 I_j.
\end{split}
\end{equation}
We estimate each term on the right hand side of \eqref{45}.
The estimates of $I_j, j=1,,,,6 $ are similar to \cite{Cho 2006 bis},  for completeness we recall them here.
\begin{equation}
\begin{split}
 I_1 & \le C \| \rho_t\|_{L^3}\|v\|_{L^\infty} \| \nabla \theta \|_{L^2}\| \nabla {\theta_t} \|_{L^2}  \le Cc^6_{2}\| \nabla \theta \|_{L^2}+\dfrac{k}{14}| \nabla {\theta_t} \|_{L^2},
\end{split}
\end{equation}
\begin{equation*}
\begin{split}
 I_2,I_5 & \le C \| \rho \|^{\frac{1}{2}}_{L^\infty}\| v_t \|_{L^6}\| \nabla \theta \|_{L^2}\| \sqrt{\rho}\theta_t\|_{L^3} \\ &  \le C c^3_{0} \| \nabla \theta \|^4|_{L^2}\| \sqrt{\rho}\theta_t\|^2_{L^2}+\dfrac{k}{14} | \nabla {\theta_t} \|_{L^2}+ \| \nabla {v_t}\|^2_{L^2},
\end{split}
\end{equation*}
$$I_3 \le C \| \rho\|^\frac{1}{2}_{L^\infty}\|v\|_{L^\infty}\| \nabla {\theta_t} \|_{L^2}| \sqrt{\rho}\theta_t\|_{L^2} \le Cc^3_{2}| \sqrt{\rho}\theta_t\|^2_{L^2}+\dfrac{k}{14} | \nabla {\theta_t} \|^2_{L^2},$$

\begin{equation*}
\begin{split}
I_4 & \le \|p_t\|_{L^2} \| \nabla v\|_{L^3}\| \theta_t\|_{L^6}
\le Cc^3_{2}\| \sqrt{\rho}\theta_t\|^2_{L^2}+Cc^6_{2}\| \nabla \theta \|^2_{L^2}+\dfrac{k}{14} | \nabla {\theta_t} \|_{L^2},
\end{split}
\end{equation*}

\begin{equation*}
\begin{split}
I_6 \le C \| \nabla v \|^{\frac{1}{2}}_{L^2}\| \nabla {v}\|^{\frac{1}{2}}_{H^1}\| \nabla{v_t} \|_{L^2} \| \nabla {\theta_t} \|_{L^2} \le Cc_1c_2 \| \nabla {v_t} \|^2_{L^2}+\dfrac{k}{14} | \nabla {\theta_t} \|_{L^2},
\end{split}
\end{equation*}
where the constant $c_2$ is chosen as in \eqref{RHO L3}.
Now we estimate $I_7$ and $I_8$ where we have to consider the Arrhenius law
\begin{equation*}
I_7 = \int_{\Omega} | \rho_t |qk | \phi(\theta)| Z | \theta_t|   \le  c\| \rho_t\|_{L^2} \| \sqrt{\theta}\|_{L^4}\| \theta_t\|_{L^4} 
\le \tilde{c} \| \theta \|^{\frac{1}{2}}_{L^2} \| \nabla{\theta_t}\|_{L^2},
\end{equation*}

\begin{equation*}
\begin{split}
I_8= \int_{\Omega} \rho |h_t| |\theta_t|= \int_{\Omega} \rho | \phi_t(\theta)|Z | \theta_t | + c \int_{\Omega} \rho | \phi(\theta) | |Z_t | | \theta_t |= I_{8,1}+ I_{8,2},
\end{split}
\end{equation*}
\begin{equation*}
\begin{split}
 I_{8,1} \le c \int_{\Omega} | \theta_t|^2 e^\frac{-1}{\theta} \bigg[ \frac{1}{2\sqrt{\theta}}+\frac{1}{\sqrt{\theta^3}}\bigg] \le c \| \theta_t\|^2_{L^2},
 \end{split}
 \end{equation*}
 
 \begin{equation*}
 \begin{split}
 I_{8,2} & \le c \int_{\Omega} | \sqrt{\theta}| |Z_t | | \theta_t| \le || \sqrt{\theta}\|_{L^4} \| \sqrt{\rho}Z_t \|_{L^2} \| \theta_t \|_{L^4} \\ & \le \tilde{c} \| \theta \|^{\frac{1}{2}}_{L^2} \| \sqrt{\rho}Z_t \|_{L^2} \| \nabla{\theta_t} \|_{L^2}  \le C_1 \| \theta \|_{L^2} \| \sqrt{\rho}Z_t\|^2_{L^2}+ C_2 \| \nabla{\theta_t}\|^2_{L^2}  \\ & \le \tilde{C_1} \| \sqrt{\rho}Z_t\|^2_{L^2}+ \tilde{C_2}\| \nabla{\theta_t}\|^2_{L^2}.
 \end{split}
 \end{equation*}

By summing up all the estimates for $I_i,$ $ i=1,....,6$ and going back to \eqref{45} we get,
\begin{equation}\label{gronwall per theta}
\begin{split}
& \dfrac{d}{dt} \| \sqrt{\rho}\theta_t\|^2_{L^2}+k \|\nabla \theta_t\|^2 \le Cc^6_{2}(1+\| \nabla \theta \|^4_{L^2})(1+\| \sqrt{\rho}\theta_t\|^2_{L^2}) \\ & +C(c_1c_2\| \nabla {v_t} \|^2_{L^2}+ c^2_0\|Z_t\|^2_{H^{-1}}+c^4_{2}\|Z\|^2_{L^2}).
\end{split}
\end{equation}
\\
By integrating \eqref{gronwall per theta} in $(\tau, t)$ we obtain:
\begin{equation}\label{46}
\begin{split}
& \| \sqrt{\rho}\theta_t (t) \|^2_{L^2}+k\int_{\tau}^{t} \|\nabla \theta_t\|^2_{L^2}ds \le \| \sqrt{\rho}\theta_t (\tau) \|^2_{L^2}+Cc^3_{1}c_2+Cc^4_{2}t \\ & +Cc^6_{2}\int_{\tau}^{t} ( 1+ \| \nabla \theta \|^4_{L^2})(1+\| \sqrt{\rho}\theta_t (t) \|^2_{L^2})ds,
\end{split}
\end{equation}
for $ 0< \tau <t\le \min(T_*,T_1).$
Now,  in order to estimate $\limsup \limits_{ \tau \rightarrow{0}} \| \sqrt{\rho}\theta_t (\tau) \|^2_{L^2}$ we observe that from \eqref{temperaturel} we have: 
$$ \int_{}^{} \rho \theta_t dx \le C \int_{}^{} (\rho |h|^2+ \rho |v|^2| \nabla \theta |^2 + \rho |\theta|^2 | \nabla v|^2 + \rho^{-1} |k \Delta \theta +Q( \nabla v)|^2)dx$$
and so,  using the condition \eqref{c0} we get:
\begin{equation*}
\begin{split}
\limsup \limits_{ \tau \rightarrow{0}} \| \sqrt{\rho}\theta_t (\tau) \|^2_{L^2} & \le C( \| \rho_0\|_{L^\infty}\|h(0)\|^2_{L^2} \\ & +\| \rho_0\|_{L^\infty}\| \nabla {u_0}\|^2_{H^1}\| \nabla {u_0}\|^2_{L^2}+\|g_1\|^2_{L^2})\le Cc^5_{0}
\end{split}
\end{equation*}
and sending $\tau \rightarrow{0}$ in \eqref{46},  we have that:
\begin{equation}\label{3.30}
\begin{split}
& \| \sqrt{\rho}\theta_t (t) \|^2_{L^2}+k\int_{0}^{t} \|\nabla \theta_t\|^2_{L^2}ds \\ & \le Cc^4_{1}c_2+Cc^6_{2}\int_{0}^{t} ( 1+ \| \nabla \theta \|^4_{L^2})(1+\| \sqrt{\rho}\theta_t (t) \|^2_{L^2})ds,
\end{split}
\end{equation}
for $0\le t \le \min(T_*,T_2), $ where $T_2=c^{-4}_2< T_1.$
Moreover we observe that 
\begin{equation}\label{numero}
 \| \nabla {\theta(t)}\|^2_{L^2} \le C\| \nabla {\theta_0} \|^2_{L^2}+C \int_{0}^{t} \| \nabla {\theta_t} \|^2_{L^2}ds ,
 \end{equation}
  for $0 \le t \le \min(T_*, T_2).$
\\
By summing up \eqref{3.30} and \eqref{numero} it follows
\begin{equation}\label{48}
\begin{split}
& \| \nabla {\theta(t)}\|^2_{L^2}+\| \sqrt{\rho}\theta_t\|^2_{L^2}+\int_{0}^{t} \| \nabla {\theta_t} \|^2_{L^2}ds \\ &  \le Cc^4_{1}c_2+Cc^6_{2} \int_{0}^{t} ( 1+ \| \nabla \theta \|^4_{L^2})(1+\| \sqrt{\rho}\theta_t (t) \|^2_{L^2})ds,
 \end{split}
\end{equation}
for $ 0 \le t\le \min(T_*,T_2).$ Now we define $\Gamma(t)$ as $$\Gamma(t)=1+\| \theta (t)\|^2_{H^1_{0}}+\| \sqrt{\rho}\theta_t (t) \|^2_{L^2}).$$
Then we can rewrite \eqref{48} in the following form,
$$\Gamma(t) \le Cc^4_{1}c_2+Cc^6_{2}\int_{0}^{t} \Gamma(s)^3 ds,$$
for $ 0 \le t\le \min(T_*,T_2).$
It is an integral inequality that can be solved.  Indeed we have,
\begin{equation}
\begin{cases}
\Gamma'(t) \le Cc^6_{2}\Gamma(s)^3 \\
\Gamma(0) \le Cc^4_{1}c_2 
\end{cases}
\end{equation} so that by separation of variables we get,
 $$\Gamma(t) \le Cc^4_{1}c_2(1-Cc^{16}_{2}t)^{-\frac{1}{2}},$$ for small $t\ge 0.$
By choosing $T_3=(2Cc^{16}_{2})^{-1} $ with $C>1 $ we have the following inequality,
 \begin{equation}\label{49}
 \| \theta(t) \|^2_{H^1_{0}}+\| \sqrt{\rho}\theta_t\|^2_{L^2}+\int_{0}^{t}(|\theta_t(s)|^2_{H^1_{0}})ds \le Cc^4_{1}c_{2}
 \end{equation}
for $0 \le t \le \min(T_*, T_3).$
\\
Regarding the integrals involving the heat source $h$
we recall that  $$h=qk\phi(\theta)Z$$
hence,  using the regularity \eqref{regularity for Z L2} together with \eqref{49} we end up with the regularity assigned in \cite{Cho 2006 bis} that is 
$$ h\in C([0, T]; L^2)\cap L^2(0,T; L^q)\quad \text{and} \quad h_t \in L^2(0, T; H^{-1}).$$
This regularity for the heat source $h$ leads to the following further estimates for the temperature $\theta$
\begin{equation*}
\begin{split}
\| \nabla \theta \|_{H^1}
\le Cc^2_{2}( 1+ \| \sqrt{\rho}\theta_t \|_{L^2}+ \| \nabla \theta \|_{L^2}),
\end{split}
\end{equation*}
\begin{equation}\label{3.35}
\| \theta(t) \|_{H^2} \le Cc^5_{2}, \quad \int_{0}^{t} \| \theta(s)\|^2_{W^{2,q}}ds \le Cc^7_{2},
\end{equation}
for $0 \le t \le \min(T_*, T_3).$ We refer to Section $2.2$ in \cite{Cho 2006 bis} for further details.
Finally,  regarding the pressure $p$ we recall the constitutive law $p=R\rho \theta$ from which we obtain
\begin{equation}\label{REG PRESS}
\| \nabla {p(t)} \|_{L^2} \| \le Cc^3_{1}c^{\frac{1}{2}}_{2}, \quad \| \nabla {p(t)} \|_{L^q} \| \le Cc^6_{2}. \quad \| p_t(t)\|_{L^2} \le Cc^5_{2}
\end{equation}
for $0 \le t \le \min(T_*, T_3).$
 \subsection{Estimates for u}
 Regarding the regularity estimates for the velocity $u$ recall that the external force $f$ is of the gravitational type i.e.
 $$ f=-\nabla\Phi,$$ 
 where the potential $\Phi$ is given by the following Poisson equation $$\Delta\Phi=4\pi G \rho.$$
 Now,  because of \eqref{limitrho} we get from the elliptic theory that our external force $f$ has at least  the regularity assumed in  \cite{Cho 2006 bis} for the external force that is 
\begin{equation}\label{reg f}
f \in C([0, T]; L^2)\cap L^2(0,T; L^q), \quad  f_t \in L^2(0, T; H^{-1}),
\end{equation}
 for a detailed discussion see \cite{Meyers}.
 \\
With the same lines of arguments as in \cite{Cho 2006 bis}, Section 2.3,  we obtain the following estimate for the velocity $u$
 \begin{equation}\label{reg U}
 \begin{split}
 \| u(t) \|^2_{H^1_{0}}+c^7_{0}c^{-6}_1c^{-\frac{1}{2}} \| u(t)\|_{H^2}+\| \sqrt{\rho}u_t\|_{L^2} \\ +\int_{0}^{t}(|u_t(t)|^2_{H^1_{0}}+|u(t)|^2_{W^{2,q}})ds \le Cc^7_{0}
 \end{split}
 \end{equation}
for $0 \le t \le \min(T_*, T_3),$
 \subsection{High order estimates for Z}
We conclude the estimates for the linearized system by getting an higher order regularity estimate for the mass fraction $Z$.  We recover this estimate at the end of the section because it strongly depends on the regularity of the time derivative of the temperature. 
 \\
We apply the time derivative on the equation \eqref{Zl},  we multiply it by $Z_t$ and we integrate over $\Omega.$
By using the continuity equation and the boundary condition we have 
\begin{equation}\label{ineqZ5}
\begin{split}
& \dfrac{1}{2}\dfrac{d}{dt}\int_{\Omega}\ \rho Z^2_t+ D\int_{\Omega} |\nabla Z_t|^2 \\ & \le \int_{\Omega} |\rho_t| |v| \cdot |\nabla Z| |Z_t|+\int_{\Omega} \rho |v_t|  \cdot |\nabla Z| |Z_t|+ k\int_{\Omega} |\phi_t(\theta)| \rho Z |Z_t| \\ & +k\int_{\Omega}|\phi(\theta)||\rho_t| Z |Z_t|+k\int_{\Omega}|\phi(\theta)|\rho Z^2_t=\sum_{j=1}^5 I_j.
\end{split}
\end{equation}
As before we analyse each one of the terms $I_j,$ $ j=1,...,5.$ Hence we get 
\begin{equation}\label{I1hz}
\begin{split}
I_1 & \le \|v\|_{L^\infty} \int_{\Omega} |\rho_t| \cdot |\nabla Z| |Z_t|\   \le c\| \rho_t\|_{L^3} \| \nabla Z\|_{L^2} \| \nabla Z_t\|_{L^2} \\ & \le \tilde{c} {\| \nabla Z\|}^2_{L^2}+ {\| \nabla Z_t\|}^2_{L^2},
\end{split}
\end{equation}

\begin{equation}\label{I2hz}
\begin{split}
I_2 & \le c \| \nabla v_t\|_{L^2} \|\nabla Z\|_{L^2} \| \nabla Z_t\|_{L^2} \\ & \le \tilde{c} ( {\|\nabla Z\|}^2_{L^2} + {\| \nabla Z_t\|}^2_{L^2}),
\end{split}
\end{equation}
\begin{equation}\label{I3hz}
\begin{split}
I_3 & \le c \int_{\Omega} | \phi_t(\theta)| \rho Z |Z_t|  \le c \int_{\Omega}|\theta_t| e^\frac{-1}{\theta} \bigg[\dfrac{1}{2\sqrt{\theta}}+\dfrac{1}{\sqrt{\theta}^3} \bigg]\rho Z |Z_t|\\ & \le c\| \sqrt{\rho}Z_t\|_{L^2}c\| \sqrt{\rho}\theta_t\|_{L^2} \le C_1| |\sqrt{\rho}Z_t\|^2_{L^2}+ C_2\| \sqrt{\rho}\theta_t\|^2_{L^2},  
\end{split} 
\end{equation}

\begin{equation}\label{I4hz}
\begin{split}
I_4 & = k \int_{}^{} | \phi(\theta)| | \rho_t| |Z| |Z_t| \le k \| \rho_t\|_{L^2} \| \sqrt{\theta}\|_{L^4}\| Z_t\|_{L^4} \\ & \le  c( \| \theta \|^{\frac{1}{2}}_{L^2} \| \nabla {Z_t}\|_{L^2})  \le \tilde{c} \| \nabla {Z_t}\|_{L^2},
\end{split}
\end{equation}

\begin{equation}\label{I5hz}
\begin{split}
I_5 & = \int_{\Omega} | \phi(\theta)| \rho Z^2_t=\int_{\Omega}\sqrt{\theta}e^{-\frac{1}{\theta}}\rho Z^2_t  \le \| \sqrt{\rho}Z_t\| \|Z_t\|_{L^4}\| \sqrt{\theta}\|_{L^4} \\ & \le \| \sqrt{\rho}Z_t\| \| \nabla {Z_t} \|_{L^2}\| \theta \|^{\frac{1}{2}}_{L^2}  \le C_1 \| \sqrt{\rho}Z_t \|^2_{L^2}+ C_2 \| \nabla {Z_t}\|^2_{L^2}.
\end{split}
\end{equation}
Summing up \eqref{I1hz}-\eqref{I5hz} and by choosing properly the constants of the Young inequality we get the following estimates
\begin{equation}\label{GronwdtZ}
\begin{split}
& \dfrac{1}{2} \dfrac{d}{dt} \| \sqrt{\rho}Z_t\|^2_{L^2} + C_1 \| \nabla Z_t\|^2_{L^2}  \\ & \le  \tilde{c}\| \nabla Z \|^2_{L^2} \| + \tilde{c_1} { \| \sqrt{\rho} \theta_{t} \|}^2_{L^2} + C_2 \| \sqrt{\rho} Z_t \|^2_{L^2}.
\end{split}
\end{equation}
Now,  by applying Gronwall inequality we have:
\begin{equation}\label{RZt}
\sqrt{\rho}Z_t \in L^\infty ((0, T; L^2).
\end{equation}
Moreover,  integrating \eqref{GronwdtZ} in $(0, T)$ we get:

\begin{equation}
\begin{split}
\| \sqrt{\rho}Z_t (T) \|^2_{L^2} + \int_{0}^{T} C_1 \| \nabla Z_t\|^2_{L^2} & \le  \tilde{c} \int_{0}^{T}  \| \nabla Z \|^2_{L^2} \| + \tilde{c_1} \int_{0}^{T}  { \| \sqrt{\rho} \theta_{t} \|}^2_{L^2} \\ & +C_2  \int_{0}^{T} \| \sqrt{\rho} Z_t \|^2_{L^2} + \| \sqrt{\rho}Z_t (0) \|^2_{L^2},
\end{split}
\end{equation}
and by using \eqref{RZt} we end up with  
\begin{equation}\label{H10}
Z_t \in L^\infty (0, T; H^1_0).
\end{equation}
Now,  combining \eqref{morl} with \eqref{H10} we obtain 
\begin{equation}\label{Z H1}
Z \in H^1(0, T; H^1).
\end{equation}
which by Morrey inequality guarantees the continuity in time for $Z.$
\subsection{The limit $\delta \rightarrow 0$}
We can summarise all the estimates of the previous section as follows
\begin{equation}\label{59}
\begin{split}
\sup \limits_{0\le t\le T_*} (| \rho (t)|_{H^1 \cap W^{1, q}} +| \rho_t(t)|_{L^2 \cap L^q}+ |\theta(t)|_{H^1_{0} \cap H^2}) \le Cc^7_{2}, \\  ess \sup \limits_{0\le t\le T_*} (\sqrt{\rho}\theta_t, \sqrt{\rho}u_t,  \sqrt{\rho}Z_t|_{L^2} + \int_{0}^{T_*} (|\theta_t(t)|^2_{H^1_{0}}+|\theta(t)|^2_{W^{2,q}})dt \le Cc^7_{2}, \\
\sup \limits_{0\le t\le T_*} (|u(t)|_{H^1_{0}}+ \beta^{-1} |u(t)|_{H^2})+ \int_{0}^{T_*}(|u_t(t)|^2_{H^1_{0}}+|u(t)|^2_{W^{2,q}})dt \le c_1, \\
\sup \limits_{0\le t\le T_*} (|Z(t)|_{H^1_{0}}+ |Z(t)|_{H^2})+ \int_{0}^{T_*}(|Z_t(t)|^2_{H^1_{0}}+|Z(t)|^2_{W^{2,q}})dt \le c_3.
\end{split}
\end{equation}
We point out that all the constants are independent on the lower bounds of the initial density $\rho_0.$
In the next lemma,  which is a generalisation of Lemma 9 in \cite{Cho 2006 bis} we get the existence of solutions to the linearized problem without assuming a positive lower bound for the initial density.

\begin{lemma}
\label{lemma9}
Assume in addition to \eqref{reglin} that the initial data $(\rho_0,\theta_0,u_0, Z_0)$ satisfy the compatibility condition \eqref{compat cond} and the vector field $v$ has the regularity \eqref{regularity v}
and 

$$c_0= 2+ \|\rho_0\|_{H^1 \cap W^{1, q}} + \|(\theta_0, u_0, Z_0)\|_{H^1_{0} \cap H^2}+ \|(g_1, g_2)\|^2_{L^2}.$$
Then there exists a unique strong solution $(\rho, \theta, u, Z)$ to the linearized problem \eqref{densityl}-\eqref{bcl} in $[0, T_*]$ satisfying the estimates \eqref{59} as well as the regularity $$ \rho \in L^{\infty}([0,T_*];H^{1} \cap W^{1,q}),  \quad \rho_t\in L^{\infty}([0, T_*]; L^2 \cap L^q), $$
$$(\theta,  u,  Z) \in L^\infty([0, T_*]; H^1_{0} \cap H^2) \cap L^2(0, T_*; W^{2,q}),$$
\begin{equation}
(\theta_t,  u_t,  Z_t)\in L^2(0, T_*; H_{0}^1), \quad (\sqrt{\rho}\theta_t,   \sqrt{\rho}u_t, \sqrt{\rho}Z_t) \in L^\infty (0, T_*; L^2).
\end{equation}

\end{lemma}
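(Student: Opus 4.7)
The plan is to construct the solution by a vanishing density argument based on Lemma \ref{lemma7}, exploiting that every estimate gathered in \eqref{59} is independent of the lower bound $\delta$ of the initial density. First I would regularize the initial density by setting $\rho_0^{\delta} = \rho_0 + \delta$ for $\delta \in (0,1)$, which belongs to $H^{1}\cap W^{1,q}$ and satisfies $\rho_0^{\delta} \ge \delta > 0$. To preserve the compatibility condition \eqref{compat cond}, I would re-define
\[
g_1^{\delta} = (\rho_0^{\delta})^{-1/2}\bigl(-k\Delta\theta_0 - Q(\nabla u_0)\bigr), \qquad g_2^{\delta} = (\rho_0^{\delta})^{-1/2}\bigl(L u_0 + \nabla p_0^{\delta}\bigr),
\]
with $p_0^{\delta} = R \rho_0^{\delta} \theta_0$. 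Using $\rho_0^{1/2} g_{1,2} \in L^2$ and $\rho_0^{\delta} \ge \rho_0$, one checks that $\|(g_1^{\delta}, g_2^{\delta})\|_{L^2}$ stays uniformly bounded as $\delta \to 0$, so the structural constant $c_0$ can be taken independent of $\delta$.

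Next I would invoke Lemma \ref{lemma7} to obtain, for each $\delta > 0$, a unique strong solution $(\rho^{\delta}, \theta^{\delta}, u^{\delta}, Z^{\delta})$ to the linearized problem \eqref{densityl}--\eqref{bcl} with initial density $\rho_0^{\delta}$. Because all a priori estimates of Sections 2.2--2.6 only used $c_0$ and the parameters $T_1, T_2, T_3$ derived from $c_2$, the bounds \eqref{59} hold uniformly in $\delta$, as does the maximum principle bound $0 \le Z^{\delta} \le 1$.

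The next step is to pass to the limit $\delta \to 0$. The uniform bounds \eqref{59} yield weak-$\ast$ limits
\[
\rho^{\delta} \rightharpoonup^{\ast} \rho, \quad (\theta^{\delta}, u^{\delta}, Z^{\delta}) \rightharpoonup^{\ast} (\theta, u, Z), \quad (\sqrt{\rho^{\delta}}\theta_t^{\delta}, \sqrt{\rho^{\delta}} u_t^{\delta}, \sqrt{\rho^{\delta}} Z_t^{\delta}) \rightharpoonup^{\ast} (\Theta, U, \zeta)
\]
in the natural spaces from \eqref{59}, together with weak convergence of the time derivatives in $L^2(0,T_*;H_0^1)$. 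Aubin--Lions compactness combined with the bounds on $\rho_t^{\delta}$ gives strong convergence of $\rho^{\delta}$ in $C([0,T_*];L^p)$ for any $p < \infty$, and analogously strong convergence of $(\theta^{\delta}, u^{\delta}, Z^{\delta})$ in $C([0,T_*];H^1)$ (after interpolation between $H_0^1\cap H^2$ and the $L^2(H_0^1)$ control on the time derivatives). Since the system \eqref{densityl}--\eqref{poissonl} is linear in the unknown quadruple with $v$ fixed, every product in the equations converges in the distributional sense, so $(\rho, \theta, u, Z)$ solves the linearized system. The identification $\Theta = \sqrt{\rho}\theta_t$ (and analogously for $u, Z$) follows from strong convergence of $\sqrt{\rho^{\delta}}$ in $C([0,T_*];L^p)$ combined with weak convergence of $\theta_t^{\delta}$ in $L^2(0,T_*;H_0^1)$. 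Weak-$\ast$ lower semicontinuity then transfers \eqref{59} to the limit.

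For uniqueness, given two solutions $(\rho_i, \theta_i, u_i, Z_i)$, $i=1,2$, with the regularity above sharing the same $v$ and initial data, the differences satisfy a linear system with zero initial data. A standard energy estimate multiplying each equation by the corresponding difference, using the regularity $\rho \in L^\infty(H^1\cap W^{1,q})$, $\sqrt{\rho}\, (\cdot)_t \in L^\infty(L^2)$ and the parabolic terms, together with Gronwall's inequality, forces the differences to vanish. The main technical obstacle in the whole argument is the limit passage in the term $\sqrt{\rho^{\delta}}\theta_t^{\delta}$ (and its analogues), since one must identify its weak limit with $\sqrt{\rho}\theta_t$ despite the potential vanishing of $\rho$; this is handled by combining strong $L^p$ convergence of $\sqrt{\rho^{\delta}}$ with weak convergence of $\theta_t^{\delta}$ in $L^2(H_0^1)$, which is precisely where the uniform character of estimates \eqref{59} becomes essential.
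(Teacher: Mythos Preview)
Your approach is essentially the same as the paper's: regularize via $\rho_0^\delta=\rho_0+\delta$, check that the compatibility functions $g_1^\delta,g_2^\delta$ stay bounded in $L^2$ so that $c_0$ is uniform, apply Lemma~\ref{lemma7}, and pass to the limit using the $\delta$-independent bounds \eqref{59} together with weak lower semicontinuity. The only notable difference is in the uniqueness step: the paper observes that since both solutions share the \emph{same} prescribed vector field $v$, the difference $\bar\rho=\rho_1-\rho_2$ solves the homogeneous transport equation $\bar\rho_t+\operatorname{div}(\bar\rho v)=0$ with zero initial data, so $\bar\rho\equiv 0$ follows directly from Lemma~\ref{lemma6} without any energy estimate; only afterwards are energy arguments used on $\bar\theta,\bar u,\bar Z$ (now with a common density), which makes the Gronwall step cleaner.
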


\begin{proof}

Define $\rho^\delta _0= \rho_0 +\delta $ for each $\delta \in (0,1).$ Then from the compatibility condition \eqref{compat cond} we have $$-k\Delta \theta_0-Q(\nabla u_0)=(\rho^\delta_{0})^\frac{1}{2}g^{\delta}_1 \quad and \quad Lu_0+ R\nabla {\rho^\delta_0} \theta_0=(\rho^{\delta}_{0})^\frac{1}{2}g^{\delta}_2$$

where $$g^{\delta}_1= \left( \dfrac{\rho^{\delta}_0}{\rho_0}\right)^{\frac{1}{2}}g_1, \quad g^{\delta}_2= \left(\dfrac {\rho_0} {\rho^{\delta}_0}\right)^{\frac{1}{2}} g_2+ R \dfrac{\delta}{\left(\rho^{\delta}_{0}\right)^\frac{1}{2}}\nabla \theta_0.$$
Moreover we observe that for all small $\delta>0,$
$$c_0= 1+ \delta + |\rho^{\delta}_0- \delta |_{H^1 \cap W^{1, q}} + |(\theta_0, u_0, Z_0)|_{H^1_{0} \cap H^2}+ |(g^{\delta}_1, g^{\delta}_2)|^2_{L^2}.$$
So we have that for the corresponding initial data $(\rho^{\delta}_0, \theta_0, u_0, Z_0)$ with small $\delta >0$  there exists a unique strong solution $(\rho^{\delta}, \theta^{\delta}, u^{\delta}, Z^{\delta})$ of the linearized system \eqref{densityl}-\eqref{Zl} satisfying the local estimates \eqref{59}. 
Now,  as $\delta \rightarrow 0$ we have that $(\rho^{\delta}, \theta^{\delta}, u^{\delta}, Z^{\delta})$ converges in a weak or weak* sense to 
$(\rho, \theta,  u, Z)$.  Because of the independence of the estimates \eqref{59} on $\delta$ and due to the lower semicontinuity of the norms,  we have that $(\rho, \theta,  u, Z)$  satisfies \eqref{59}.  Moreover $(\rho, \theta,  u, Z)$ is a solution to the linearized problem with general initial data $(\rho_0, \theta_0,  u_0, Z_0)$ with nonnegative initial density and it satisfies the regularity:
$$ \rho \in L^{\infty}([0,T_*];H^{1} \cap W^{1,q}),  \quad \rho_t\in L^\infty([0, T_*]; L^2 \cap L^q), $$
$$(\theta,  u,  Z) \in L^\infty([0, T_*]; H^1_{0} \cap H^2) \cap L^2(0, T_*; W^{2,q}),$$
\begin{equation}\label{3.59}
(\theta_t,  u_t,  Z_t)\in L^2(0, T_*; H_{0}^1), \quad (\sqrt{\rho}\theta_t,   \sqrt{\rho}u_t, \sqrt{\rho}Z_t) \in L^\infty (0, T_*; L^2).
\end{equation}
\subsection{Uniqueness}
The last step is to prove the uniqueness of the solution.
We assume by contradiction that there exists two solutions $(\rho_1, \theta_1,  u_1,  Z_1) $ and $(\rho_2, \theta_2,  u_2,  Z_2) ,$ and we define: $$ \overline{\rho}=\rho_1-\rho_2,$$ $$\overline{\theta}=\theta_1-\theta_2,$$
$$ \overline{u}=u_1-u_2,$$ $$ \overline{Z}=Z_1-Z_2.$$
Since $\overline{\rho} \in L^{\infty}(0, T_*; L^q)$ is a solution to the linear transport equation $$\overline{\rho}_t+\operatorname{div}(\overline{\rho}v)=0 $$ then,  by the same argument of the proof of Lemma \ref{lemma6},  it follows that necessarily $\overline{\rho}=0$ i.e.  $\rho_1=\rho_2$ in $[0, T_*] \times {\Omega}.$ \\
Now we take the equation: 
\begin{equation}
\rho_1 \overline{\theta}_t +\rho_1 v \cdot \nabla \overline{\theta}+ R \rho_1 \overline{\theta}\operatorname{div}v=0
\end{equation} 
and multiply by $\overline{\theta}$ and integrate in $(0,t) \times \Omega,$ recalling that $({\rho_1})_t+\operatorname{div}(\rho_1 v)=0$ and $\overline{\theta}(0)=0$ in $\Omega$ we deduce that:
\begin{equation}\label{uniqueness}
\dfrac{1}{2} \int_{\Omega} \rho_1| \overline{\theta}|^2(t)dx+ k \int_{0}^{t} \int_{\Omega} | \nabla {\overline{\theta}}|^2 dxds=-R  \int_{0}^{t}\int_{\Omega} \rho_1|\overline{\theta}|^2 | \operatorname{div}v|dxds
\end{equation}
and applying Gronwall's inequality we have $\overline{\theta}=0$ in $(0,t) \times \Omega.$ \\
Note that,  since we are working on a bounded domain,  this estimate is rigorous and we don't need to use the expansion technique by means of the cut off functions $\psi \in C^\infty _c (B_1)$ as in \cite{Cho 2006 bis} that were used in order to justify the boundness of the left hand side in \eqref{uniqueness}. 
Following the same procedure,  one can also prove that $\overline{u}=0,\quad \overline{Z}=0$ in $[0, T_*] \times {\Omega}.$\\
Concerning the continuity in time of the solution $(\rho, \theta, u, Z)$ we recall for the density that,  $\rho \in L^{\infty}(0, T_*; L^q)$ hence the continuity in time follows from Lemma \ref{lemma6}.
Regarding $(\theta,  u, Z)$ we already have that $$(\theta,  u, Z) \in C([0, T_*]; H^1_0)\cap C([0, T_*]; H^2-weak)$$ and from equations \eqref{momentuml},  \eqref{temperaturel} it follows that $$((\rho \theta _t)_t,(\rho u _t)_t,(\rho Z_t)_t) \in L^2 (0, T_*; H^{-1}).$$
Combining this two estimates with $$(\rho \theta_t, \rho u_t,  \rho Z_t) \in L^2( 0, T_*; H^1_0)$$ we end up with
\begin{equation}
(\rho \theta_t,  \rho u_t,  \rho Z_t) \in C([0. T_*]; L^2).  
\end{equation}
hence  we deduce that
 \begin{equation}\label{**}
(\theta, u, Z) \in C([0,T_*]; H^2).
\end{equation}
\end{proof}

\section{ Proof of the Theorem 1.1}
This section is devoted to the proof of Theorem \ref{Teo2.1}. 
The proof is performed by using an iteration argument.
We properly define an iterative scheme for \eqref{pb1} and we prove the existence of an approximating solution a fixed step $k$ of the inductive process.  We estimate the difference between the solution at two different steps $k,k+1$ and finally we show the convergence of the approximating solution to the solution of the nonlinear problem \eqref{pb1}.
We denote 
\begin{equation}
c_0 = 2+ \| \rho_0\|_{H^1 \cap W^{1, q}}+ \| (\theta_0,  u_0, Z_0)\|_{H^1_{0} \cap H^2}+ \|( g_1, g_2)\|^2_{L^2},
\end{equation}
and,  as in the previous section,  we choose the positive constants $c_1,\beta, c_2$ and $T_{**}$ depending  on $c_0.$ \\
Now,  let $u^0 \in C([0, \infty ); H^1_0 \cap H^2) \cap L^2(0, \infty; H^2)$ be the solution of the heat equation 
\begin{equation}\label{heat}
\begin{cases}
\omega_t-\Delta \omega=0\\
\omega(0)=u_0 \, ,
\end{cases}
\end{equation}
If we take a small time $T_1 \in (0, T_{**}]$ we have $$ \sup \limits_{0 \le t \le T_*} {(|u^0(t)|_{H^1_0}+\beta^{-1}u^0(t)|_H^2)} + \int_{0}^{T_*} (|u^0_t(t)|^2_{H^1_0} + |u^0t)|^2_{W^{2,q}})dt \le c_1$$ 
By applying Lemma \ref{lemma9} there exists a unique strong solution $(\rho_1, \theta_1, u_1, Z_1)$ of the linearized system \eqref{densityl}-\eqref{bcl} with $v=u^0,$ which satisfies the regularity estimate \eqref{59} with $T_*$ replaced by $T_1.$\\
We define the following inductive scheme.  We take $u^0$ as in \eqref{heat}, we assume  that $u^{k−1}$ is defined for $k> 1, $ and we denote by $(\rho^k,\theta^k, u^k, Z^k)$  the unique strong solution to the following initial value problem

\begin{equation}\label{densitylk}
\rho^k_t+ \operatorname{div}(\rho^k u^{k-1})=0 
\end{equation}
\begin{equation}\label{momentumlk}
(\rho^k u^k)_{t}+ \operatorname{div} (\rho^k u^{k-1} \otimes u^k)+ Lu^k+ \nabla p^k= \rho^k f^k
\end{equation}
\begin{equation}\label{temperaturelk}
c_v((\rho^k \theta^k)_t+\operatorname{div}(\rho^k \theta^k u^{k-1}))-k\Delta \theta^k+ p^k\operatorname{div}u^{k-1}=Q(\nabla u^{k-1})+ \rho^k h^k 
\end{equation}
\begin{equation}\label{Zlk}
(\rho^k Z^k)_{t}+\operatorname{div}(\rho^k u^{k-1}Z^k))=-k \phi (\theta^k) \rho^k Z^k +\operatorname{div}(D \nabla Z^k),
\end{equation}
\begin{equation}\label{poisson lk}
\Delta \Phi^k=4\pi G\rho^k
\end{equation}
\begin{equation}\label{ic1lk}
(\rho,^k \theta^k, u^k, Z^k)_{|{t=0}}=(\rho_0, \theta_0, u_0, Z_0) \quad in \quad \Omega
\end{equation}
\begin{equation}\label{bclk}
(\theta^k, u^k,Z^k)=(0,0,0) \quad on\quad (0, T)\times \partial\Omega
\end{equation}
\\
where $t\geq 0$, $x\in \Omega\subset\mathbb{R}^{3}$  $$f^k=-\nabla \Phi^k,\quad \quad h^k=qk\phi(\theta^k)Z^k, \quad p^k=R\rho^k \theta^k, $$
$$Q(\nabla u^k)=\frac{\mu}{2}|\nabla u^k+\nabla {u^k}^T|^2+\lambda (\operatorname{div}u^k)^2, \quad  Lu^k=-\mu \Delta u^k-(\lambda+\mu)\nabla\operatorname{div}u^k.$$ 
Assuming by induction that $u^{k−1}$ is defined for $k> 1, $ and has the regularity \eqref{3.59} and \eqref{**} by applying Lemma \ref{lemma9} it follows that $(\rho^k, \theta^k,  u^k, Z^k)$ is a strong solution of the problem \eqref{densitylk}-\eqref{bclk} with the regularity 
\begin{equation}\label{65}
\begin{split}
& \sup \limits_{0\le t\le T_1} (| \rho^k (t)|_{H^1 \cap W^{1, q}} +| \rho^k_t(t)|_{L^2 \cap L^q}) + \sup\limits_{0\le t\le T_1}  |( \theta^k,  u^k,  Z^k)(t)|_{H^1_{0} \cap H^2}) \\ & + ess \sup \limits_{0\le t\le T_1} (\sqrt{\rho^k}\theta^k_t, \sqrt{\rho^k}u^k_t,  \sqrt{\rho^k}Z^k_t|)_{L^2} + \int_{0}^{T_1} |( \theta^k_t,  u^k_t, Z^k_t)(t)|^2_{H^1_{0}} \\ & + |( \theta^k_t,  u^k_t, Z^k_t)(t)|^2_{W^{2,q}}dt \le \tilde{C}. 
\end{split}
\end{equation}
indeed for each fixed $k$ we have that the system \eqref{densitylk}-\eqref{bclk} is a linearized system with $v$ repleaced by $u^{k-1}.$
Our goal is to show that the sequence $(\rho^k, \theta^k,  u^k, Z^k)$ converges to a solution of the nonlinear problem \eqref{pb1} in a strong sense.  Now we define,
$$\overline{\rho}^{k+1} =\rho^{k+1} −\rho^k,  \quad  \overline{\theta} ^{k+1}= \theta ^{k+1} −\theta ^k, $$ $$\overline{u}^{k+1}=u^{k+1} −u^k, \quad  \overline{Z}^{k+1}=Z^{k+1}-Z^k .$$
From the equations \eqref{densitylk}-\eqref{bclk} we get the following equations for the differences
\begin{equation}\label{DIFFERENCE DENSITY}
\overline{\rho}^{k+1}_t + \operatorname{div}(\overline{\rho}^{k+1}u^k)+\operatorname{div}(\rho^k \overline{u}^k)= 0,
\end{equation}
\begin{equation}\label{DIFFERENCE TEMP}
\begin{split}
& \rho^{k+1} \overline{\theta}^{k+1}_{t} +\rho^{k+1}u^k \cdot \nabla \overline{\theta}^{k+1}-k\Delta \overline{\theta}^{k+1}=Q(\nabla u^k)-Q(\nabla u^{k-1}) \\ & -\overline{\rho}^{k+1}\theta^k_t  + \overline{\rho}^{k+1}[qk\phi(\theta^k)Z^k-u^{k-1} \cdot \nabla \theta^k- R\theta^k\operatorname{div}u^{k-1}] \\ & -\rho^{k+1}[-qk\phi(\theta^{k+1})\overline{Z}^{k+1}-qk\phi(\theta^{k+1})Z^k+qk\phi(\theta^k)Z^k \\ & +\overline{u}^k \cdot \nabla \theta^k +R \overline{\theta}^{k+1} \operatorname{div}\overline{u}^k+R\theta^k\operatorname{div}\overline{u}^k],
\end{split}
\end{equation}
\begin{equation}\label{DIFFERENCE MOMENT}
\begin{split}
 \rho^{k+1} \overline{u}^{k+1}_{t} &+\rho^{k+1}u^k \cdot \nabla \overline{u}^{k+1}+L \overline{u}^{k+1}\\
 &= \overline{\rho}^{k+1}(f^{k+1}-u^{k}_t-u^{k-1} \cdot \nabla u^k)\\&-\rho^{k+1}\overline{u}^k \cdot \nabla u^k-R\nabla (\rho^{k+1}\overline{\theta}^{k+1}-\overline{\rho}^{k+1}\theta^k)+\rho^k\overline{f}^{k+1},
 \end{split}
 \end{equation}
\begin{equation}\label{DIFFERENCE Z}
\begin{split}
\rho^{k+1} \overline{Z}^{k+1}_{t} +\rho^{k+1}u^k \cdot \nabla \overline{Z}^{k+1}-D\Delta  \overline{Z}^{k+1}=-\overline{\rho}^{k+1}Z^k_t\\+\overline{\rho}^{k+1}[-u^{k-1} \cdot \nabla Z^k-k\phi(\theta^k)Z^k]\\-\rho^{k+1}[\overline{u}^k \cdot \nabla Z^k+ k\phi(\theta^{k+1})\overline{Z}^{k+1} + k\phi(\theta^{k+1})Z^k-k\phi(\theta^k)Z^k].
\end{split}
\end{equation}
We point out that differently from \cite{cho2004} and \cite{Cho 2006 bis},  in our case the external force $f$ and the heat source $h$ depends on the solution,  so the iteration scheme leads to more complicated equation for the differences to deal with.
We perform some a priori estimates for the new unknows $\overline{\rho}^{k+1}$, $\overline{\theta}^{k+1}$,  $\overline{u}^{k+1}$, $\overline{Z}^{k+1}$.
The idea is to deal with the $k+1$ terms by using the regularity of the approximating solution at the step $k$ which is given by the inductive hypothesis and then to construct a Gronwall lemma type structure.
We multiply \eqref{DIFFERENCE Z} by $\overline{Z}^{k+1},$  integrate in $\Omega$ and we use Lagrange Mean Value Theorem in order to deal with the unknown terms in $k+1$ in the right hand side of \eqref{DIFFERENCE Z}.  Hence we obtain the following inequality

\begin{equation}\label{Dis ZK}
\begin{split}
& \dfrac{1}{2}\dfrac{d}{dt} \int_{}^{} \rho^{k+1} {| \overline{Z}^{k+1}|}^2+ D \int_{}^{} {| \nabla {\overline{Z}^{k+1}|}}^2  \le \int_{}^{}| \overline{\rho}^{k+1}||Z^k_t| |\overline{Z}^{k+1}| \\ & +\int_{}^{} |\overline{\rho}^{k+1}|[|u^{k-1}|| \nabla Z^k|+k\phi(\theta^k)Z^k]|\overline{Z}^{k+1}| +\int_{}^{} \rho^{k+1}[|\overline{u}^k| | \nabla Z^k| \\ & +kc|\overline{Z}^{k+1}||\overline{\theta}^{k+1}|+ k\phi(\theta^{k})|\overline{Z}^{k+1}| +kcZ^k|\overline{\theta}^{k+1}|]|\overline{Z}^{k+1}| \\ &  =\int_{}^{} \overline{\rho}^{k+1}|Z^k_t| |\overline{Z}^{k+1}| +\sum_{j=1}^6 I_j.
\end{split}
\end{equation}
Now, by means of H\"{o}lder,  Young and Sobolev inequalities,  we estimate all the terms $I_j$ $j=1,,,,6$ in terms of the left hand side of \eqref{Dis ZK},  hence we have
\begin{equation*}
\begin{split}
I_1 & = \int_{}^{}| \overline{\rho}^{k+1}||u^{k-1}|| \nabla Z^k||\overline{Z}^{k+1}| \le \| u^{k-1}\|_{L^\infty}||\overline{Z}^{k+1}||_{L^\infty}  \| \overline{\rho}^{k+1} \|_{L^2} \| \nabla Z^k \|_{L^2} \\ & \le  C_1\| \overline{\rho}^{k+1} \|^2_{L^2}+ C_2\| \nabla Z^k \|^2_{L^2},
\end{split}
\end{equation*}
\begin{equation*}
\begin{split}
I_2 & =\int_{}^{} | \overline{\rho}^{k+1}k\sqrt{\theta^k}e^{-\frac{1}{\theta^k}}Z^k|\overline{Z}^{k+1} \le  c \| \overline{\rho}^{k+1} \|_{L^2} \| \overline{Z}^{k+1}\|_{L^4}\|\sqrt{\theta^k}\|_{L^4} \\ & \le c\| \overline{\rho}^{k+1} \|_{L^2}\| \overline{ \nabla Z}^{k+1} \|_{L^2}\|\theta^k\|^{\frac{1}{2}}_{L^2} \le C_1 {\| \overline{\rho}^{k+1} \|}^2_{L^2}+C_2{\|  { \nabla \overline{ Z}^{k+1}} \|}^2_{L^2},
\end{split}
\end{equation*}
\begin{equation*}
\begin{split}
I_3 & =\int_{}^{} \rho^{k+1}\overline{Z}^{k+1}|\overline{u}|^k| | \nabla {Z^k}|  \le  \| \sqrt{\rho^{k+1}}\|_{L^\infty} \| \sqrt{\rho^{k+1}}\overline{Z}^{k+1}\|_{L^2}\|\overline{u}^k\|_{L^4}\| \nabla {Z^k}\|_{L^4}\\ & \le c\| \sqrt{\rho^{k+1}}\overline{Z}^{k+1}\|_{L^2}\| \nabla {\overline{u}^k}\|_{L^2}\| \nabla {Z^k}\|_{L^4} \\ & \le  c { \| \nabla {\overline{u}^k} \|}^2_{L^2}+\tilde{c}{( \| \sqrt{\rho^{k+1}}\overline{Z}^{k+1}\|}^2_{L^2}{\| \nabla {Z^k}\|}^2_{L^4}),
\end{split}
\end{equation*}
\begin{equation*}
\begin{split}
I_4 & = \int_{}^{} \rho^{k+1}{\overline{Z}^{k+1}}^2 kc |\overline{\theta}^{k+1}|  \le C_1{\| \sqrt{\rho^{k+1}}\overline{Z}^{k+1}\|}^2_{L^2}+C_2{\| \sqrt{\rho^{k+1}}\overline{\theta}^{k+1}\|}^2_{L^2},
\end{split}
\end{equation*}
\begin{equation*}
\begin{split}
I_5 & = \int_{}^{} \rho^{k+1} |\overline{Z}^{k+1}|^2 k\phi(\theta^{k}) \le \| \sqrt{\rho^{k+1}}\|_{L^\infty} {\| \sqrt{\rho^{k+1}} \overline{Z}^{k+1}\|}_{L^2}\| \theta^k\|^{\frac{1}{2}}_{L^2} \| \nabla{\overline{Z}^	{k+1}} \|_{L^2} \\ & \le C_1{\| \sqrt{\rho^{k+1}} \overline{Z}^{k+1}\|}^2_{L^2}+ C_2 \| \nabla{\overline{Z}^{k+1} \|}^2_{L^2},
\end{split}
\end{equation*}
\begin{equation*}
\begin{split}
I_6 & = \int_{}^{} \rho^{k+1}\overline{Z}^{k+1}kcZ^k | \overline{\theta}^{k+1} | \le \| \sqrt{\rho^{k+1}}\|_{L^\infty} \| \sqrt{\rho^{k+1}} \overline{Z}^{k+1}\|_{L^2}\| \sqrt{\rho^{k+1}} \overline{\theta}^{k+1}\|_{L^2} \\ & \le C_1 \| \sqrt{\rho^{k+1}} \overline{Z}^{k+1}\|^2_{L^2}+C_2\| \sqrt{\rho^{k+1}} \overline{\theta}^{k+1}\|^2_{L^2}.
\end{split}
\end{equation*}
Now we set $$L^k(t)= \overline{c}(\| \nabla {Z^k}\|^2_{L^2} + \|\nabla{Z^k}\|^2_{L^4})$$ which has the following properties
 $$L^k(t) \in L^1(0, T_1),  \quad \int_{0}^{t} L^k(s)ds\le \tilde{c},  \quad 0\le t \le T_1.$$
Summing up all the estimates of the $I_j$, $j=1,...6$ we get 
\begin{equation}\label{INEQZK}
\begin{split}
& \dfrac{d}{dt} \| \sqrt{\rho^{k+1}}\overline{Z}^{k+1}\|^2_{L^2}+ D \| \nabla {\overline{Z}^{k+1}}\|^2_{L^2} \\ & \le L^k(t)( \| \overline{\rho}^{k+1}\|^2_{L^2}+\| \sqrt{\rho^{k+1}}\overline{Z}^{k+1}\|^2_{L^2})+ \tilde{c} \| \nabla {\overline{u}^k}\|^2_{L^2} + c\int_{}^{} | \overline{\rho}^{k+1}| |Z^k_t| | \overline{Z}^{k+1}| dx
\end{split}
\end{equation}
We multiply \eqref{DIFFERENCE TEMP} by $\overline{\theta}^{k+1}$
\begin{equation}
\begin{split}
& \dfrac{1}{2}\dfrac{d}{dt}\int_{}^{} \rho^{k+1} | \overline{\theta}^{k+1}|^2 dx+ k\int_{}^{} | \nabla {\overline{\theta}^{k+1}}|^2 dx \\ & \le c\int_{}^{} [( | \nabla {u}^k|+ | \nabla {u}^{k-1}|) | \nabla {\overline{u}^k} | | \overline{\theta}^{k+1}|+| \overline{\rho}^{k+1}| | \theta^k_t| | \overline{\theta}^{k+1}|  \\ &+  \overline{\rho}^{k+1}qk\phi(\theta^k)|Z^k|  |\overline{\theta}^{k+1}| +\rho^{k+1}qk\overline{Z}^{k+1}c|\overline{\theta}^{k+1}|^2 \\ & +\rho^{k+1}qk\phi(\theta^k)\overline{Z}^{k+1}|\overline{\theta}^{k+1}|+\rho^{k+1}qk\phi(\theta^k)Z^k|\overline{\theta}^{k+1}| \\ & +| \overline{\rho}^{k+1}| [ | u^{k-1} | | \nabla {\theta}^k|+ | \theta^k| | \operatorname{div}u^{k-1}|] |\overline{\theta}^{k+1}| \\ & + \rho^{k+1}( | \overline{u}^k| | \nabla {\theta}^k| + | \overline{\theta}^{k+1}| | \operatorname{div}u^k|+| \theta^k| |\operatorname{div}\overline{u}^k|)| \overline{\theta}^{k+1}| \\ & \le c\int_{}^{} [( | \nabla {u}^k|+ | \nabla {u}^{k-1}|) | \nabla {\overline{u}^k} | | \overline{\theta}^{k+1}|+| \overline{\rho}^{k+1}| | \theta^k_t| | \overline{\theta}^{k+1}| +\sum_{j=1}^9 I_j.
\end{split}
\end{equation}
\begin{equation*}
\begin{split}
I_1 & \le c \int_{}^{} \overline{\rho}^{k+1} \sqrt{\theta^k}Z^k| |\overline{\theta}^{k+1}| \le c \| \overline{\rho}^{k+1} \|_{L^2} \| \theta^k\|_{L^2}^{\frac{1}{2}} \| \overline{\theta}^{k+1} \|_{L^4} \\ &   \le C_1 \| \overline{\rho}^{k+1} \|^2_{L^2}+C_2\| \nabla{\overline{\theta}^{k+1}} \|^2_{L^2},
\end{split}
\end{equation*}
\begin{equation*}
\begin{split}
I_2 & \le c \int_{}^{} \rho^{k+1} \overline{Z}^{k+1} |\overline{\theta}^{k+1}|^2  \le \| \sqrt{\rho^{k+1}}\overline{\theta}^{k+1}\|^2_{L^2},
\end{split}
\end{equation*}
\begin{equation*}
\begin{split}
I_3 & \le c \int_{}^{} \rho^{k+1}|\overline{\theta}^{k+1}| \sqrt{\theta^k} | \overline{Z}^{k+1} |  \le  \| \sqrt{\rho^{k+1}} \overline{\theta}^{k+1} \|_{L^2} \| \theta^k \|^{\frac{1}{2}}_{L^2} \| \nabla{\overline{Z}^{k+1}}\|_{L^2} \\ &  \le  C_1 \| \sqrt{\rho^{k+1}} \overline{\theta}^{k+1} \|_{L^2}^2+C_2\| \nabla{\overline{Z}^{k+1}}\|_{L^2}^2,
\end{split}
\end{equation*}
\begin{equation*}
\begin{split}
I_4 & \le c \int_{}^{}\rho^{k+1}Z^k|\overline{\theta}^{k+1}|^2  \le C_1\| \sqrt{\rho^{k+1}} \overline{\theta}^{k+1}\|_{L^2}^2,
\end{split}
\end{equation*}
\begin{equation*}
\begin{split}
I_5=\int_{}^{}| \overline{\rho}^{k+1}|  | u^{k-1} | | \nabla {\theta}^k| | \overline{\theta}^{k+1}| \le C \| \overline{\rho}^{k+1} \|_{L^2} \| \nabla {u^{k-1}} \|_{H^1} \| \nabla {\theta}^k \|_{H^1} \| \overline{\theta}^{k+1} \|_{L^2}.
\end{split}
\end{equation*}
For $I_5$ we point out that we use Morrey inequality combined with H\"{o}lder inequality,  in order to get an estimate in terms of the $H^1$ norm of $ \nabla {u^{k-1}}$ and $ \nabla {\theta}^k $.
\begin{equation*}
\begin{split}
I_6=\int_{}^{} \overline{\rho}^{k+1}| \theta^k| | \operatorname{div}u^{k-1}| |\overline{\theta}^{k+1}| \le C \| \nabla u^k \|_{L^\infty} \| \sqrt{\rho^{k+1}}\overline{\theta}^{k+1} \|^2_{L^2},
\end{split}
\end{equation*}
\begin{equation*}
\begin{split}
 I_7+I_8+I_9 & =\int_{}^{}\rho^{k+1}( | \overline{u}^k| | \nabla {\theta}^k| + | \overline{\theta}^{k+1}| | \operatorname{div}u^k|+| \theta^k| |\operatorname{div}\overline{u}^k|)| \overline{\theta}^{k+1}| \\ & \le  C \| \sqrt{\rho}^{k+1} \|_{L^\infty} \| \nabla {\overline{u}^k} \|_{L^2} \| \nabla \theta^k\|_{H^1} \| \sqrt{\rho}^{k+1}\overline{\theta}^{k+1}\|_{L^2}.
\end{split}
\end{equation*}
Summing up the estimates for the $I_j$ and observing that 
\begin{equation*}
\begin{split}
\int_{}^{} [( | \nabla {u}^k|+ | \nabla {u}^{k-1}|) | \nabla {\overline{u}^k} | \overline{\theta}^{k+1}| \le C( \| \nabla{u}^k \|_{L^3} + \| \nabla {u}^{k-1} \|_{L^3})\| \nabla {\overline{u}^k} \|_{L^2}| \nabla {\overline{\theta}^k} \|_{L^2}
\end{split}
\end{equation*}
 we obtain the following inequality
\begin{equation}\label{INEQTEMPK}
\begin{split}
& \dfrac{1}{2}\dfrac{d}{dt} \int_{}^{} \rho^{k+1} | \overline{\theta}^{k+1}|^2 dx+ k\| \nabla {\overline{\theta}^{k+1}}\|^2_{L^2} \\ & \le B^k (t)( \| \overline{\rho}^{k+1}\|^2_{L^2}+\| \sqrt{\rho^{k+1}}\overline{\theta}^{k+1}\|^2_{L^2}) + \tilde{C} \| \nabla {\overline{u}^k}\|^2_{L^2} \\ &+ C\int_{}^{} | \overline{\rho}^{k+1}| |\theta^k_t| | \overline{\theta}^{k+1}| dx
\end{split}
\end{equation}
\\
for some $B^k(t) \in L^1(0, T_1)$ and $\int_{0}^{t} B^k(s)ds\le \tilde{C}$ for all $k\ge 1$ and $t \in [0, T_1].$
\\
We multiply \eqref{DIFFERENCE MOMENT} by $\overline{u}^{k+1}$  and we integrate over $\Omega$, 

\begin{equation}
\begin{split}
& \dfrac{1}{2}\dfrac{d}{dt}\int_{}^{} \rho^{k+1}| \overline{u}^{k+1} |^2 dx+ \mu \int_{}^{} | \nabla {\overline{u}^{k+1}}|^2 dx \\ & \le   C\int_{}^{} |\overline{\rho}^{k+1}|( |f^{k+1}|+|u^k_t|+|u^{k-1} \cdot \nabla {u}^k|) | \overline{u}^{k+1}| \\ &+ \int_{}^{} \rho^{k+1} | \overline{u}^k| |\nabla {u}^k| | \overline{u}^{k+1}| +\ (| \rho^{k+1}| | \overline{\theta}^{k+1}|+| \overline{\rho}^{k+1}| | \theta^k|) |\nabla {\overline{u}^{k+1}}|] \\ & + \int_{}^{} \rho^k \overline{f}^{k+1} \overline{u}^{k+1}dx.
\end{split}
\end{equation}
\\
We recall that $\overline{f}^{k+1}$ depends on $\overline{\rho}^{k+1}$ via  the representation formula in terms of Poisson Kernel for the equation of the gravitational potential in \eqref{pb1}.
The contribution due to $f^{k+1}$ can be estimated by using the $L^\infty$ bound of $\rho^{k+1}$.
The following estimates hold
\begin{equation*}
\begin{split}
& \int_{}^{} | \overline{\rho}^{k+1} | |f^{k+1} | | \overline{u}^{k+1}| \le \| \overline{\rho}^{k+1} \|_{L^2} \| f^{k+1} \|_{L^4} \| \overline{u}^{k+1} \|_{L^4}  \\ & \le  C_1  \| \overline{\rho}^{k+1} \|_{L^2}^2 + C_2 \| \nabla{\overline{u}^{k+1}} \|_{L^2}^2,
\end{split}
\end{equation*}
\begin{equation*}
\begin{split}
\int_{}^{} | \overline{\rho}^{k+1}| |u^{k-1}| |\nabla{u^k}|  \le c \| \overline{\rho}^{k+1}\|_{L^2}^2 \| \nabla{u^k}\|_{L^4}^2+ \| \nabla{\overline{u}^{k+1}}\|_{L^2}^2,
\end{split}
\end{equation*}
\begin{equation*}
\begin{split}
\int_{}^{} \rho^{k+1} | \overline{u}^k| | \nabla{u}^k | |\overline{u}^{k+1} | \le c\| \sqrt{\rho}^{k+1}\overline{u}^{k+1} \|_{L^2}^2 \| \nabla{u}^k\|^2_{L^4}+ \| \nabla{\overline{u}^k}\|^2_{L^2},
\end{split}
\end{equation*}
\begin{equation*}
\begin{split}
\int_{}^{} | \rho^{k+1} | | \theta^{k+1}| | \nabla{\overline{u}^{k+1}}|  \le C_1 \| \| \sqrt{\rho}^{k+1} \overline{\theta}^{k+1} \|_{L^2}^2+ C_2 \| \nabla{\overline{u}^{k+1}} \|_{L^2}^2,
\end{split}
\end{equation*}
\begin{equation*}
\begin{split}
 \int_{}^{} | \overline{\rho}^{k+1} | \theta^k | | \nabla{\overline{u}^{k+1}} | & \le \| \overline{\rho}^{k+1} \|_{L^\infty} \| \theta^k \|_{L^2} \| \nabla{\overline{u}^{k+1}} \|_{L^2} \le C_1 \| \theta^k \|^2_{L^2}+ \| \nabla{\overline{u}^{k+1}}\|^2_{L^2},
\end{split}
\end{equation*}
\begin{equation*}
\begin{split}
& \int_{}^{} \rho^k \overline{f}^{k+1} \overline{u}^{k+1} \le \| \rho^k \|_{L^2} \| \overline{f}^{k+1} \|_{L^4} \| \overline{u}^{k+1} \|_{L^4} \le C_1 \| \rho^k \|_{L^2} \|^2+ C_2 \| \nabla{\overline{u}^{k+1}} \|_{L^2}^2,
\end{split}
\end{equation*}
\\
and similarly to \eqref{INEQZK},   \eqref{INEQTEMPK}, \eqref{INEQDK} we get
\begin{equation}\label{INEQMOMK}
\begin{split}
& \dfrac{d}{dt} \| \sqrt{\rho^{k+1}}\overline{u}^{k+1}\|^2_{L^2}+ \mu \| \nabla {\overline{u}^{k+1}}\|^2_{L^2}\\ & \le D^k_\eta (t)( \| \overline{\rho}^{k+1}\|^2_{L^2}+\| \sqrt{\rho^{k+1}}\overline{u}^{k+1}\|^2_{L^2}) \\ & + \tilde{C} \| \sqrt{\rho^{k+1}} \overline{\theta}^{k+1}\|^2_{L^2} + \eta \| \nabla {\overline{u}^k}\|^2_{L^2} + C\int_{}^{} | \overline{\rho}^{k+1}| |u^k_t| | \overline{u}^{k+1}| dx
\end{split}
\end{equation}
for some $D^k_{\eta}(t) \in L^1(0, T_1)$ and $\int_{0}^{t} D^k_{\eta}(s)ds\le \tilde{C}+ \tilde{C}_{\eta}t$ for all $k\ge 1$ and $t \in [0, T_1]$.
\\
We multiply equation \eqref{densitylk} by $\overline{\rho}^{k+1}$ and integrating over $\Omega$ we obtain
\begin{equation}
\begin{split}
& \dfrac{d}{dt}\int_{}^{} | \overline{\rho}^{k+1}|^2dx \\ & \le C \int_{}^{} ( | \nabla {u^k}| || \overline{\rho}^{k+1}|^2+ | \nabla {\rho^k}| | \overline{u}^k| | \overline{\rho}^{k+1}|+ \rho^k | \nabla {\overline{u}^k}|\overline{\rho}^{k+1}|)dx  \\ & \le  C( \| \nabla {u}^k \|_{W^{1,q}} \| \overline{\rho}^{k+1} \|^2_{L^2} + (\| \nabla {\rho}^k \|_{L^3}+ \| \rho^k\|_{L^\infty})\| \nabla {\overline{u}^k}\|_{L^2}\| \overline{\rho}^{k+1} \|^2_{L^2}).
\end{split}
\end{equation}
with $q\in (3,6].$
We define $$A^k_{\eta}(t)= C\| \nabla {u}^k \|_{W^{1,q}}+ \eta^{-1}C(\| \nabla {\rho}^k(t)\|^2_{L^3}+\| \rho^k(t)\|^2_{L^\infty}) $$
and the following inequality holds
\begin{equation}\label{INEQDK}
\dfrac{d}{dt}\| \overline{\rho}^{k+1} \|^2_{L^2}\le A^k_{\eta}(t)\| \overline{\rho}^{k+1} \|^2_{L^2}+ \eta \| \nabla {\overline{u}^k} \|^2_{L^2}.
\end{equation}
with $A^k_{\eta}(t) \in L^1(0, T_1)$ and $\int_{0}^{t} A^k_{\eta}(s)ds\le \tilde{C}+ \tilde{C}_{\eta}t$ for all $k\ge 1$ and $t \in [0, T_1]$ 
Now we set
\begin{equation}
\begin{split}
\psi^{k+1}(t)& = \| \overline{\rho}^{k+1}\|^2_{L^2}+ \dfrac{\eta}{k}\| \sqrt{\rho^{k+1}} \overline{\theta}^{k+1}\|^2_{L^2} \\&+ \| \sqrt{\rho^{k+1}}\overline{u}^{k+1}\|^2_{L^2}+\| \sqrt{\rho^{k+1}}\overline{Z}^{k+1}\|^2_{L^2}.
\end{split}
\end{equation}
Summing up \eqref{INEQZK},  \eqref{INEQTEMPK}, \eqref{INEQMOMK},  \eqref{INEQDK},  we obtain the following inequality
\begin{equation}\label{INEQUALITIPSI}
\begin{split}
& \dfrac{d}{dt} \psi^{k+1}+\eta \| \nabla {\overline{\theta}^{k+1}}\|^2_{L^2}+\eta \| \nabla {\overline{u}^{k+1}}\|^2_{L^2}+D \| \nabla {\overline{Z}^{k+1}}\|^2_{L^2} \\ & \le E^k_\eta (t)\psi^{k+1}+4\eta\tilde{C}\eta \| \nabla {\overline{u}^k}\|^2_{L^2} \\ &+ C \int_{}^{} | \overline{\rho}^{k+1}|( \eta |\theta^k_t| | \overline{\theta}^{k+1}| +|u^k_t| | \overline{u}^{k+1}| +|Z^k_t| | \overline{Z}^{k+1}| )dx,
\end{split}
\end{equation}
for some $E^k_{\eta}(t) \in L^1(0, T_1)$ and $\int_{0}^{t} E^k_{\eta}(s)ds\le \tilde{C}+ \tilde{C}_{\eta}t$ for all $k\ge 1$ and $t \in [0, T_1],$  
with $E^k_{\eta}(t)$ defined summing up $A^k_{\eta}(t),  B^k(t), D^k_{\eta}(t), L^k(t) .$
We estimate the integral in the right hand side of \eqref{INEQUALITIPSI} in terms of $\psi^{k+1}(t)$ and the quantities on the left hand side.  Note that since we are working on a bounded domain,  we don't need to refine the estimates for the density as in \cite{Cho 2006 bis} and we can compute easily the estimate.  We start with the term
\begin{equation}
 \int_{}^{} | \overline{\rho}^{k+1}|( \eta |\theta^k_t| | \overline{\theta}^{k+1}| +|u^k_t| | \overline{u}^{k+1}| +|Z^k_t| | \overline{Z}^{k+1}| )dx= \sum_{i=1}^3 J_i
\end{equation}
We point out that by construction, each $J_i,  i=1,  ..... , 3$ has a similar structure and so they can be easily estimated as follows
\begin{equation*}
\begin{split}
J_1 =\int_{}^{} | \overline{\rho}^{k+1}| \eta |\theta^k_t| | \overline{\theta}^{k+1}|  \le  \tilde{c} (\| \overline{\rho}^{k+1}\|^2_{L^2}\| \nabla {\theta}^k_t\|^2_{L^2}+\| \nabla {\overline{ \theta}^{k+1}}\|^2_{L^2},
\end{split}
\end{equation*}
\begin{equation*}
\begin{split}
J_2=\int_{}^{} | \overline{\rho}^{k+1}|  |u^k_t| | \overline{u}^{k+1}|  \le  \tilde{c} (\| \overline{\rho}^{k+1}\|^2_{L^2}\| \nabla {u}^k_t\|^2_{L^2}+\| \nabla {\overline{u}^{k+1}}\|^2_{L^2},
\end{split}
\end{equation*}
\begin{equation*}
\begin{split}
J_3=\int_{}^{} | \overline{\rho}^{k+1}|  |Z^k_t| | \overline{Z}^{k+1}|  \le  \tilde{c} (\| \overline{\rho}^{k+1}\|^2_{L^2}\| \nabla {Z}^k_t\|^2_{L^2}+\| \nabla {\overline{Z}^{k+1}}\|^2_{L^2},
\end{split}
\end{equation*}
where we observe that in all the estimates of the $J_i,$ $i=1, ...... , 3$ the first terms on the right hand side is estimated by $\psi^{k+1},$ while the second terms can be absorbed by the left hand side  of \eqref{INEQUALITIPSI} by choosing properly the Young inequality constant.  \\
Finally, we get 
\begin{equation}\label{FINAL INEQ PSI}
\begin{split}
& \dfrac{d}{dt} \psi^{k+1}+\eta \| \nabla {\overline{\theta}^{k+1}}\|^2_{L^2}+\mu \| \nabla {\overline{u}^{k+1}}\|^2_{L^2}+D \| \nabla {\overline{Z}^{k+1}}\|^2_{L^2} \\ & \le F^k_\eta (t)\psi^{k+1}+4\eta\tilde{C} \| \nabla {\overline{u}^k}\|^2_{L^2}
\end{split}
\end{equation}
for some $F^k_{\eta}(t) \in L^1(0, T_1)$ and $\int_{0}^{t} F^k_{\eta}(s)ds\le \tilde{C}+ \tilde{C}_{\eta}t$ for all $k\ge 1$ and $t \in [0, T_1]$.
Since $\psi^{k+1}(0)=0,$ by applying Gronwall inequality we get
\begin{equation}\label{3.93}
\begin{split}
& \psi^{k+1}(t) + \int_{0}^{t} (\eta \| \nabla {\overline{\theta}^{k+1}}\|^2_{L^2}+\mu \| \nabla {\overline{u}^{k+1}}\|^2_{L^2}+D \| \nabla {\overline{Z}^{k+1}}\|^2_{L^2}) \\ & \le  8\eta \tilde{C} \int_{0}^{t}\| \nabla {\overline{u}^k}\|^2_{L^2}ds) \exp{[\int_{0}^{t}F^k_{\eta}(s)ds]} \\ & \le 8\eta \tilde{C} \int_{0}^{t}\| \nabla {\overline{u}^k}\|^2_{L^2}ds) \exp{[C_1+C_2 t]}
\end{split}
\end{equation}
\\
By choosing the constants in such a way that $8\eta \tilde{C}e^{C_1}=1/2$, $e^{C_2 T_3}= \mu$,  we can take the $\sup$  in time in \eqref{3.93}  and summing over all $k\ge 1$  we end up with the following inequality
\begin{equation}\label{CAUCHY}
\begin{split}
& \sum_{k=1}^{\infty} \sup \limits_{0 \le t \le T_*} \psi^{k+1}(t) \\ &+\sum_{k=1}^{\infty} \int_{0}^{T_*}(\eta \| \nabla {\overline{\theta}^{k+1}}\|^2_{L^2}+\mu \| \nabla {\overline{u}^{k+1}}\|^2_{L^2}+D\| \nabla {\overline{Z}^{k+1}}\|^2_{L^2})dt \le \tilde{C} < \infty,
\end{split}
\end{equation}
with $T_*= \min(T_2, T_3).$
\\
Hence the approximating solution $(\rho^k,  \theta^k, u^k, Z^k)$ is convergent to a limit $(\rho,  \theta, u, Z)$ in the following strong sense:
\begin{equation}
\begin{cases}
\rho^k-\rho^1 \rightarrow{\rho-\rho^1} \quad in \quad L^\infty( 0, T_*; L^2), \\
(\theta^k, u^k, Z^k) \rightarrow {(\theta, u, Z)} \quad in \quad L^2( 0, T_*; H^1_0).
\end{cases}
\end{equation}
Moreover we get from \eqref{65} that the limit $(\rho,  \theta, u, Z)$ satisfy the following regularity estimate
\begin{equation}\label{REGULARITY NL}
\begin{split}
& \sup \limits_{0\le t\le T_1} (| \rho (t)|_{H^1 \cap W^{1, q}} +| \rho_t(t)|_{L^2 \cap L^q}) + \sup\limits_{0\le t\le T_1}  |( \theta,  u,  Z)(t)|_{H^1_{0} \cap H^2}) \\ & +  ess \sup \limits_{0\le t\le T_1} (\sqrt{\rho}\theta_t, \sqrt{\rho}u_t,  \sqrt{\rho}Z_t|)_{L^2} \\ & + \int_{0}^{T_1} (|( \theta_t,  u_t, Z_t)(t)|^2_{H^1_{0}}+(|( \theta_t,  u_t, Z_t)(t)|^2_{W^{2,q}}dt \le \tilde{C}.
\end{split}
\end{equation}
and it is a solution of the nonlinear problem \eqref{pb1}. \\
Regarding the uniqueness of the solution,  we proceed similarly to the proof of the uniqueness for the solution of the linearized system. 
We assume by contradiction that there exists two solutions $(\rho_1, \theta_1,  u_1,  Z_1) $ and $(\rho_2, \theta_2,  u_2,  Z_2) ,$we take the difference and we define: $$ \overline{\rho}=\rho_1-\rho_2,$$ $$\overline{\theta}=\theta_1-\theta_2,$$
$$ \overline{u}=u_1-u_2,$$ $$ \overline{Z}=Z_1-Z_2.$$ Then by using the same arguments as in  \eqref{INEQZK},   \eqref{INEQTEMPK}, \eqref{INEQMOMK},  \eqref{INEQDK}  and by applying Gronwall Lemma we can show that $$\overline{\rho}= \overline{\theta}= \overline{u}= \overline{Z}=0$$ in $[0, T_*] \times {\Omega},$ 
which proves the uniqueness of the solution.  Also the continuity in time of the solution $(\rho,  \theta, u, Z)$ follows with the same argument of the linearized system.
\section{Blow up criteria}
This Section is dedicated to the proof of Theorem \ref{firstblowup} and Theorem \ref{scndblowup}.
Both proofs are based on a contradiction argument.
Theorem \ref{firstblowup} follows from the estimates we proved in the local existence Theorem \ref{Teo2.1},  while the result in Theorem \ref{scndblowup} is a blow up criterion of the Beale Kato Majda type,  since we have that $\nabla u$ plays an important role in the loss of smoothness.  The blow up result in Theorem \ref{scndblowup} is more refined with respect to the one in Theorem \ref{firstblowup},  since it does not involve the density,  however,  an additional viscosity assumption is required. 
This result is proved by recovering some regularity estimates due to the contradiction hypothesis and by applying an extension argument.
\subsection{Proof of the Theorem 1.2}
Let $T_*$ be the maximal existence time of the solution $(\rho,  \theta, u, Z)$ of the nonlinear problem \eqref{pb1}. \\
We define the following functions 
\begin{equation}\label{Jt}
\begin{split}
J(t)=1+\|\rho(t)\|_{H^{1} \cap W^{1,q}}+ \|\rho_t(t)\|_{L^2\cap L^q}+\|(\theta(t), u(t), Z(t))\|_{H_{0}^1 \cap H^2}\\
 \|(\sqrt{\rho}\theta(t), \sqrt{\rho}u(t), \sqrt{\rho}Z(t))\|_{L^2}+{\|(\theta(t), u(t), Z(t))\|}^2 _{W^{2,q}}\\
+ \int_{0}^{t}  |(\theta_t(t), u_t(t), Z_t(t))\|_{H_{0}^1} \;,
 \end{split}
\end{equation}
\begin{equation}
\phi(t)=1+\|\rho(t)\|_{H^{1} \cap W^{1,q}}+\|(\theta(t), u(t), Z(t))\|_{H_{0}^1} \;.
\end{equation}
The function $J(t)$ represents the regularity class of the solution $(\rho,  \theta, u, Z)$,  while $\phi(t)$ is the quantity we want to show is unbounded.  
Let  $(\rho,  \theta, u, Z)$ be the solution in $[\tau,  T_*) \times \Omega$  with 
$\tau \in (0, T_*), $ $\tau$ fixed,  our goal is to obtain an inequality of the following form
\begin{equation}\label{KEY INEQ}
J(t)\le J(\tau)H(\phi(t)),
\end{equation}
with $\tau \in (0, T_*) ,$   and $H : [0, \infty) \rightarrow  [0, \infty), $ is an appropriate function enough regular and increasing with respect to $\phi .$ 
In order to get \eqref{KEY INEQ} we need to estimate all the terms in $J(t)$ by means of the function $\phi(t)$ or more in general by $H(\phi(t)).$ 
These estimates and the properties of $H$ are the key points of our argument. \\
Once the inequality \eqref{KEY INEQ} has been obtained,  we can argue by contradiction in the following way. \\
We assume that $$\lim_{t \rightarrow T_*} J(t)= L< \infty,$$
hence because of the continuity in time of the solution,  we have that $$J(T_*) < \infty .$$ 
If so,  we can consider $J(T_*)$ as a new initial data and we can apply Theorem \ref{Teo2.1} to extend the solution $(\rho, \theta, u, Z)$. This leads to a contradiction with respect to the maximality of $T_*.$
Hence,  we necessarily have that 
$$\lim_{t \rightarrow T_*} J(t)= \infty$$ 
and consequently also the right hand side in the inequality \eqref{KEY INEQ} is blowing up as $t \rightarrow{T_*}.$ Finally, thanks to the regularity of the function $H$, we have our claim.
\\
Now,  by following the same ideas of \cite{cho2004},  we estimate all of the elements of $J(t).$ 
For the density we have 
\begin{equation}\label{1}
\| \rho(t)\|_{H^1 \cap W^{1,q}} \le Cc_0 \exp {(c \int_{0}^{t} \| \nabla u \|_{H^1 \cap W^{1,q}})}
\end{equation} 
and from \eqref{reg U} it follows 

\begin{equation}\label{2}
\| \nabla u \|_{H^1} \le c(1+ \| \sqrt{\rho}u_t(t)\|^2_{L^2})H(\phi(t))
\end{equation}
\begin{equation}\label{3}
\| \nabla u \|_{W^{1,q}} \le c(1+ \| \sqrt{\rho}u_t(t)\|^2_{L^2})H(\phi(t)) + \| f \|_{L^q}+ \| \nabla {u_t (t)}\|_{L^2}.
\end{equation}
for a proper $H$ with the previously defined properties.
Similarly,  we rewrite the regularity estimates used to get \eqref{65} depending on $\phi(t)$ or $H(\phi(t).$
We write only the new estimates for the mass fraction $Z,$ all other estimates are obtained similarly and can be found in \cite{cho2004},  \cite{Cho 2006 bis}.
The regularity estimates for $Z$ can be written as follows
\begin{equation}\label{6}
\begin{split}
\| \sqrt{\rho}Z_t\|^2_{L^2}+ \int_{\tau}^{t} \| \nabla {Z_t} \|^2_{L^2}ds & \le c + c \| \sqrt{\rho}Z_t(\tau) \|^2_{L^2} \\ & + c \int_{\tau}^{t}( 1+ \| | \sqrt{\rho}Z_t(t)\|^2_{L^2}) H(\phi(t)),
\end{split}
\end{equation}
which leads to the following key inequality
\begin{equation}\label{STAR}
\begin{split}
\| \sqrt{\rho}u_t\|^2_{L^2}+ \int_{0}^{t} \| \nabla {u_t} \|^2_{L^2}ds+\| \sqrt{\rho}\theta_t\|^2_{L^2}+ \int_{0}^{t} \| \nabla {\theta_t} \|^2_{L^2}ds \\ +\| \sqrt{\rho}Z_t\|^2_{L^2}+ \int_{0}^{t} \| \nabla {Z_t} \|^2_{L^2}ds \le  cJ(\tau) \exp( cT_* \sup \limits_{0\le s \le t} H(\phi(s)).
\end{split}
\end{equation}
indeed by using $J(t)$ and the properties of $H$ 
\begin{equation}\label{S3}
\begin{split}
& \| \sqrt{\rho}Z_t\|^2_{L^2}+ \int_{0}^{t} \| \nabla {Z_t} \|^2_{L^2}ds \le cJ(\tau)\exp(c\int_{\tau}^{t} H(\phi(s))ds \\ & \le  cJ(\tau)\exp(c\int_{\tau}^{T_*}  \sup \limits_{0 \le s \le t} H(\phi(s))ds \le \tilde{c}J(\tau) \exp( cT_* \sup \limits_{0\le s \le t} H(\phi(s)).
\end{split}
\end{equation}
\\
and so \eqref{STAR} holds.  
\\
We have that all the elements of the regularity class \eqref{Jt} can be estimated properly depending on $\phi(t)$ or more in general on $H(\phi(t)),$ hence,  the following inequality holds
\begin{equation}
J(t)\le c J(\tau)( \sup \limits_{0\le s \le t}H(\phi(s))\exp(cT_* \sup \limits_{0\le s \le t} H(\phi(s))
\end{equation}
and using the contradiction argument previously discussed we prove our claim.
\subsection{Proof of the Theorem 1.3}
Let $T_*$ be the maximal existence time of the solution $(\rho,  \theta, u, Z)$ of the nonlinear problem \eqref{pb1}.  \\
We assume by contradiction that for any $T < T^*$
\begin{equation}\label{ASSURDO 2}
(\|\theta\|_{L^\infty(0, T, L^\infty)}+\| \nabla u\|_{L^1(0, T; L^\infty)}+\| \nabla Z\|_{L^1(0, T; L^\infty)})\le c< \infty.
\end{equation}
\\
We recall that in the framework of \cite{fan} we require the following additional hypothesis on the viscosity coefficients 
\begin{equation}\label{visk cap4}
\mu>\frac{1}{7}\lambda.
\end{equation}
Moreover we consider the following initial and boundary conditions
\begin{equation}\label{bc cap 4}
\begin{split}
\begin{cases}
(\rho, \theta, u, Z)_{|t=0}=(\rho_0, \theta_0, u_0, Z_0) \quad in \quad \Omega \\
u=0,\quad \dfrac{\partial\theta}{\partial\nu}=0,  \quad Z=0 \quad on\quad \partial\Omega,
\end{cases}
\end{split}
\end{equation}
where $\nu$ is the outgoing unitary normal to $\partial\Omega .$ \\
Our first claim is that,  the density $\rho$ is bounded from above.  Indeed by using characteristics method,   Gronwall inequality and \eqref{ASSURDO 2},  we have that for any $x \in \overline{\Omega},$ $t\in [0, T]$ the following bounds hold
\begin{equation}\label{above bound density}
0 \le \underline{\rho}\exp \left(-\int_{0}^{t} \| \operatorname{div}u \|_{L^\infty}dt \right) \le \rho(x,t) \le \overline{\rho}\exp \left(-\int_{0}^{t} \| \operatorname{div}u \|_{L^\infty}dt \right) \le C
\end{equation}
where $0 \le \underline{\rho} \le \rho_0 \le \overline{\rho}.$
\\
Our next claim is that the temperature $\theta$ is positive a.e in $[0, T] \times \Omega.$  In order to prove it we use the following auxiliary function
\begin{equation}\label{AUX H}
H(\theta)(x,t):=c_v \max \{-\theta(x,t), 0\}.
\end{equation}
We deduce from \eqref{AUX H} that
\begin{equation}\label{H'}
H'(\theta)=\begin{cases} -c_v,  \quad  \theta < 0 \\ 0,  \quad \quad  \theta > 0
\end{cases}
\end{equation}
and 
\begin{equation}\label{H''}
H''(\theta)=0.
\end{equation}
We take the time evolution equation for the temperature in \eqref{pb1} and we multiply it by $H'(\theta)$ and integrate over $\Omega$ and we get
\begin{equation}
\begin{split}
& \int_{\Omega} \rho(H(\theta)_t+u \cdot \nabla {H(\theta)})+ R\rho H(\theta)\operatorname{div}u)dx \\ & = \int_{\Omega} H'(\theta)(k\Delta \theta+\dfrac{\mu}{2}| \nabla u+ \nabla {u^T} |^2 +\lambda(\operatorname{div}u)^2+\rho qk\phi(\theta)Z)dx \\ & \le\int_{\Omega} H'(\theta) \rho qk\phi(\theta)Zdx \le 0.
\end{split}
\end{equation}
Moreover we take the continuity equation in \eqref{pb1} and we multiply it by $H(\theta)$ and integrate over $\Omega$,  we obtain
\begin{equation}\label{GRONW H}
\dfrac{d}{dt}\int_{\Omega} \rho H(\theta)dx \le \| \operatorname{div}u\|_{L^\infty} \int_{\Omega} \rho H(\theta)dx.
\end{equation}
By applying Gronwall inequality to \eqref{GRONW H} we have 
\begin{equation}
\int_{\Omega} \rho H(\theta)dx\equiv 0 \quad \text{for any}\; t\in [0, T],
\end{equation}
since,  $\theta_0\ge 0$.  Hence,  by the definition of $H(\theta)$ we necessarily deduce that $\theta \ge 0.$
We recall the following identity
\begin{equation}
\dfrac{\Delta {\theta}}{\theta}=\operatorname{div}\left(\dfrac{\nabla {\theta}}{\theta}\right)-\nabla \left({\dfrac{1}{\theta}}\right) \cdot \nabla \theta=\operatorname{div}\left(\dfrac{\nabla {\theta}}{\theta}\right)+\dfrac{|\nabla \theta |^2}{\theta^2}
\end{equation}
and define $s=\log\theta$ so that $s$ satisfies the following equation 
\begin{equation}\label{identity theta}
\begin{split}
& (\rho s)_t+\operatorname{div}(\rho su))-k\operatorname{div}\left(\dfrac{\nabla {\theta}}{\theta}\right)+R\rho \operatorname{div}u \\ & =\dfrac{1}{\theta}[\dfrac{\mu}{2}| \nabla u+\nabla {u^T}|^2 +\lambda(\operatorname{div}u)^2] + \dfrac{k}{\theta^2}| \nabla \theta|^2 + \dfrac{1}{\theta}\rho qk \phi(\theta)Z.
\end{split}
\end{equation}
Integrating the last equation in $\Omega \times (0,T)$ using \eqref{ASSURDO 2} and the initial conditions,  we get
\begin{equation}\label{alpha}
\begin{split}
& \int_{0}^{T} \int_{\Omega} \left( \dfrac{\alpha | \nabla u|^2}{\theta}+\dfrac{k | \nabla \theta|^2}{\theta^2} +\dfrac{1}{\theta}\rho qk \phi(\theta)Z  \right)dxds-\int_{\Omega}\rho \log \theta dx  \bigg{|}_{t=T} \\ & \le C \| \rho\|_{L^\infty(0, T; L^\infty)}\| \operatorname{div}u\|_{L^1(0, T; L^\infty)}+ \int_{\Omega} \rho_0 \log \theta_0 dx \le C,
\end{split}
\end{equation}
where $\alpha$ is a generic positive constant.  The second term in the left hand side of \eqref{alpha} can be estimated as follows,  
\begin{equation}
\int_{\Omega \cap \{\theta \ge 1 \}} \rho \log \theta dx \bigg{|}_{t=T} \le C \| \rho\|_{L^\infty(0, T; L^\infty)} \log \| \theta \|_{L^\infty(0, T; L^\infty)} \le C
\end{equation}
where we used the continuity of $\rho$ and $\theta$ with respect to time.  Using this estimate,  we can rewrite \eqref{alpha} in the following way
\begin{equation}
\begin{split}
&  \int_{0}^{T} \int_{\Omega} \left( \dfrac{\alpha | \nabla u|^2}{\theta}+\dfrac{k | \nabla \theta|^2}{\theta^2}+\dfrac{1}{\theta}\rho qk \phi(\theta)Z  \right)dxds \\ & +\int_{\Omega \cap \{0 \le \theta \le 1 \}} \rho \log \theta dx \bigg{|}_{t=T} \le C
 \end{split}
\end{equation}
We observe that the term due to the contribution of the mass fraction $Z$ can be estimated as follows 
\begin{equation}\label{new term Z}
\int_{0}^{T} \int_{\Omega}\dfrac{1}{\theta}\rho qk \phi(\theta)Z \le c \| \rho \|_{L^\infty(0,T; L^\infty)} \| Z \|_{L^\infty (0, T; L^\infty)} \le C
\end{equation}
since the function $\dfrac{\phi(\theta)}{\theta}$ is bounded.
\\
The estimates \eqref{identity theta}-\eqref{new term Z} are the proof of the following Lemma.
\begin{lemma}
\label{Lemma 2.1 cap4}
For any $T < T_*,$ we have 
\begin{equation}
\sup \limits_{0 \le t \le T} \int_{\Omega} \rho(t) | \log \theta(t) |dx+ \int_{0}^{T} \int_{\Omega}( | \nabla {\log \theta}|^2+ | \nabla u|^2+| \nabla \theta|^2)dxdt \le C.
\end{equation}
\end{lemma}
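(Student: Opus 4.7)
The plan is to derive an entropy-type identity for $s=\log\theta$, integrate it in space-time, and extract all three gradient norms from the resulting inequality using the contradiction hypothesis \eqref{ASSURDO 2}.

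First I would use the already-established ingredients: the upper bound $\rho\le C$ from the characteristic formula applied to the continuity equation (using $\|\operatorname{div}u\|_{L^1(0,T;L^\infty)}\le \|\nabla u\|_{L^1(0,T;L^\infty)}\le C$), and the nonnegativity $\theta\ge 0$ obtained through the auxiliary function $H(\theta)=c_v\max\{-\theta,0\}$ via Gronwall. Then I divide the temperature equation in \eqref{pb1} by $\theta$ and use the identity $\Delta\theta/\theta=\operatorname{div}(\nabla\theta/\theta)+|\nabla\theta|^2/\theta^2$ to obtain the equation
\begin{equation*}
(\rho s)_t+\operatorname{div}(\rho s u)-k\operatorname{div}\Bigl(\frac{\nabla\theta}{\theta}\Bigr)+R\rho\operatorname{div}u=\frac{Q(\nabla u)}{\theta}+\frac{k|\nabla\theta|^2}{\theta^2}+\frac{\rho qk\phi(\theta)Z}{\theta},
\end{equation*}
with the Neumann/Dirichlet boundary data in \eqref{bc cap 4} ensuring that the transport and conductive flux terms vanish upon integration in $\Omega$.

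Next I would integrate over $\Omega\times(0,T)$. The coercive piece $Q(\nabla u)/\theta$ dominates $\alpha|\nabla u|^2/\theta$ for some $\alpha>0$ (this is where the viscosity condition would enter if needed, but for the lemma it is enough that $Q\ge \alpha|\nabla u|^2$ pointwise). The term $R\rho\operatorname{div}u$ is controlled by $\|\rho\|_{L^\infty}\|\operatorname{div}u\|_{L^1(L^\infty)}\le C$, and the initial-data contribution $\int_\Omega\rho_0\log\theta_0\,dx$ is finite by hypothesis. This yields
\begin{equation*}
\int_0^T\!\!\int_\Omega\Bigl(\tfrac{\alpha|\nabla u|^2}{\theta}+\tfrac{k|\nabla\theta|^2}{\theta^2}+\tfrac{\rho qk\phi(\theta)Z}{\theta}\Bigr)dx\,ds-\int_\Omega\rho\log\theta\,dx\bigg|_{t=T}\le C.
\end{equation*}

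To control the remaining log-term I would split $\Omega=\{\theta\ge 1\}\cup\{0\le\theta<1\}$: on the first set $\rho\log\theta\le\|\rho\|_{L^\infty}\log\|\theta\|_{L^\infty(L^\infty)}\le C$ by the contradiction assumption, while on the second $-\rho\log\theta\ge 0$ is exactly $\rho|\log\theta|$, giving the sup-in-time bound $\sup_t\int_\Omega\rho|\log\theta|\le C$. The Arrhenius term is handled by observing that $\phi(\theta)/\theta=\theta^{-1/2}e^{-E/\theta}$ is globally bounded on $[0,\infty)$ (the exponential kills the singularity at $\theta=0$), so that integrand is dominated by $C\|\rho\|_{L^\infty}\|Z\|_{L^\infty}\le C$ since $0\le Z\le 1$ by the maximum principle. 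Finally, to recover the undivided norms in the lemma I use the $L^\infty$ bound on $\theta$ from \eqref{ASSURDO 2}: $|\nabla u|^2\le\|\theta\|_{L^\infty}|\nabla u|^2/\theta$ and $|\nabla\theta|^2\le\|\theta\|_{L^\infty}^2|\nabla\theta|^2/\theta^2$.

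The main obstacle I expect is the bookkeeping of the log-term near $\theta=0$: one must be careful that the Arrhenius contribution $\phi(\theta)/\theta$ stays integrable despite the $1/\theta$ factor, and that the splitting argument correctly extracts a one-sided bound on $\int\rho|\log\theta|$ rather than merely on $\int\rho\log\theta$. Everything else is a direct combination of the contradiction assumption \eqref{ASSURDO 2}, the density bound \eqref{above bound density}, and the pointwise coercivity of $Q(\nabla u)$.
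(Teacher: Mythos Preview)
Your proposal is correct and follows essentially the same route as the paper: derive the entropy equation for $s=\log\theta$ via the identity $\Delta\theta/\theta=\operatorname{div}(\nabla\theta/\theta)+|\nabla\theta|^2/\theta^2$, integrate in space--time, bound the $R\rho\operatorname{div}u$ term by $\|\rho\|_{L^\infty}\|\operatorname{div}u\|_{L^1(L^\infty)}$, split the $\log\theta$ integral along $\{\theta\ge 1\}$ and $\{\theta<1\}$, and dismiss the Arrhenius contribution via the boundedness of $\phi(\theta)/\theta$. You even make explicit the final step---recovering the undivided gradients from $|\nabla u|^2/\theta$ and $|\nabla\theta|^2/\theta^2$ via $\|\theta\|_{L^\infty}$---which the paper leaves implicit.
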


We now recall some Lemmas which will be useful in order to prove the positivity of $\theta.$

\begin{lemma}
\label{Lemma 2.2 cap4}
Let $\Omega$ be a bounded domain in $\mathbb{R}^N$ and $\gamma > 1$ be a constant.  Given constants $M$ and $E_0$ with $0 < M < E_0$,  there is a constant $C(E_0, M)$ such that for any non-negative function $\rho$ satisfying 
\begin{equation}
M \le \int_{\Omega} \rho dx, \quad  \int_{\Omega} \rho^{\gamma}dx \le E_0
\end{equation}
and any $v \in H^1(\Omega)$,
\begin{equation}
\| v \|_{L^2}^2 \le C \bigg{[} \| \nabla v \|_{L^2(\Omega)}^2+ \left( \int_{\Omega} \rho |v| dx \right)^2\bigg{]}.
\end{equation}
\end{lemma}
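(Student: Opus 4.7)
The plan is to argue by contradiction, using a standard compactness argument together with the Rellich--Kondrachov theorem. The idea is that if the inequality fails, we can normalize and extract a limiting $v$ which is a nonzero constant, and then the constraint $\int_\Omega \rho\,dx \ge M$ forces $\int_\Omega \rho|v|\,dx$ to remain bounded away from zero, contradicting the assumed failure.

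Concretely, I would proceed as follows. Suppose no such constant $C=C(E_0,M)$ exists. Then for each integer $n\ge 1$ there are $\rho_n, v_n$ satisfying $M\le \int_\Omega \rho_n\,dx$ and $\int_\Omega \rho_n^\gamma\,dx\le E_0$, with
\begin{equation*}
\|v_n\|_{L^2}^2 > n\left[\|\nabla v_n\|_{L^2}^2 + \left(\int_\Omega \rho_n |v_n|\,dx\right)^2\right].
\end{equation*}
By the homogeneity of the inequality in $v$, I can normalize $\|v_n\|_{L^2}=1$; then $\|\nabla v_n\|_{L^2}\to 0$ and $\int_\Omega \rho_n |v_n|\,dx\to 0$. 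Since $(v_n)$ is bounded in $H^1(\Omega)$ with $\Omega$ bounded, Rellich--Kondrachov gives (up to subsequence) $v_n\to v$ strongly in $L^2$ and weakly in $H^1$, with $\|v\|_{L^2}=1$ and $\nabla v=0$. Hence $v$ is a nonzero constant on $\Omega$, specifically $|v|=|\Omega|^{-1/2}>0$.

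Next I would extract a weak limit of $(\rho_n)$. Since $\|\rho_n\|_{L^\gamma}\le E_0^{1/\gamma}$, up to a further subsequence $\rho_n\rightharpoonup \rho$ weakly in $L^\gamma(\Omega)$; testing against the constant function $1\in L^{\gamma/(\gamma-1)}(\Omega)$ (which is integrable because $\Omega$ is bounded) yields $\int_\Omega \rho\,dx\ge M>0$, so $\rho\not\equiv 0$. Finally I pass to the limit in $\int_\Omega \rho_n|v_n|\,dx$ by splitting
\begin{equation*}
\int_\Omega \rho_n |v_n|\,dx = |v|\int_\Omega \rho_n\,dx + \int_\Omega \rho_n\bigl(|v_n|-|v|\bigr)\,dx.
\end{equation*}
The first term converges to $|v|\int_\Omega \rho\,dx \ge M|v|>0$ by the weak convergence in $L^\gamma$. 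For the remainder I use H\"older's inequality, $\|\rho_n\|_{L^\gamma}\le E_0^{1/\gamma}$, and the strong convergence $v_n\to v$ in $L^{\gamma'}(\Omega)$ (which follows from $v_n\to v$ in $L^2$ combined with the Sobolev embedding $H^1\hookrightarrow L^{2^*}$ and interpolation) to conclude that this remainder vanishes. Therefore $\liminf_n \int_\Omega \rho_n|v_n|\,dx \ge M|v|>0$, contradicting $\int_\Omega \rho_n|v_n|\,dx\to 0$.

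The main obstacle is the passage to the limit in the mixed product $\int_\Omega \rho_n |v_n|\,dx$, where $\rho_n$ converges only weakly in $L^\gamma$ while $v_n$ converges strongly only in $L^2$; the resolution is to use Sobolev embedding plus interpolation to upgrade the convergence of $v_n$ to $L^{\gamma'}$. Once this compensated-compactness-type step is handled, the contradiction closes immediately because the constancy of $v$ together with the mass lower bound on $\rho$ renders $\int_\Omega \rho|v|\,dx$ strictly positive.
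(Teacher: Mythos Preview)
The paper does not actually prove this lemma; it simply refers to Feireisl \cite{feir} for the details. Your contradiction--compactness argument is the standard route to such weighted Poincar\'e-type inequalities and is essentially what one finds in that reference, so the overall strategy is correct.

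There is, however, one step that does not close as written. In estimating the remainder $\int_\Omega \rho_n(|v_n|-|v|)\,dx$ you invoke strong convergence $v_n\to v$ in $L^{\gamma'}(\Omega)$, obtained by interpolating the strong $L^2$-limit against the $H^1\hookrightarrow L^{2^*}$ bound. This interpolation produces strong convergence in $L^p$ only for $p<2^*=\tfrac{2N}{N-2}$, so your argument tacitly requires $\gamma'<2^*$, i.e.\ $\gamma>\tfrac{2N}{N+2}$; for $N\ge 3$ and $\gamma$ close to $1$ (e.g.\ $\gamma\in(1,6/5]$ when $N=3$) the step fails. The easy repair avoids the H\"older pairing altogether: passing to a further subsequence so that $v_n\to v$ a.e., Egorov's theorem gives a measurable set $E\subset\Omega$ with $|\Omega\setminus E|$ as small as we wish on which $|v_n|\ge |v|/2$ for all large $n$. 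Since
\[
\int_{\Omega\setminus E}\rho_n\,dx\le E_0^{1/\gamma}\,|\Omega\setminus E|^{1/\gamma'}
\]
can be made $\le M/2$, one gets $\int_E\rho_n\,dx\ge M/2$ and hence $\int_\Omega\rho_n|v_n|\,dx\ge \tfrac{|v|}{2}\int_E\rho_n\,dx\ge \tfrac{M|v|}{4}>0$, contradicting $\int_\Omega\rho_n|v_n|\,dx\to 0$ without any restriction on $\gamma$.
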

For a detailed proof we refer to \cite{feir}. \\
Combining Lemma \ref{Lemma 2.2 cap4}  with \eqref{above bound density}, we get 
\begin{equation}
\int_{0}^{T}\int_{\Omega} | \log \theta |^2 dxdt \le C.
\end{equation}
Moreover,  since $\theta \in C([0,T], H^2)$,  then $\theta$ and also $\log \theta$ are continuous with respect to space too.  In particular,  from last inequality we get that $| \log \theta |$  is bounded. By combining this property and the assumption on $\theta$ we get that $$\theta(x,t) >0, \quad \forall x \in \overline{\Omega}, \quad t\in [0,T].$$ which is our claim.\\
The next is a technical Lemma due to Hoff,  which is  strongly related to the viscosity assumption \eqref{visk cap4}.  We point out that the external force $f=-\nabla \Phi$  is enough regular in order to apply the lemma,  indeed it has the  regularity \eqref{reg f}. 

\begin{lemma}
\label{Lemma 2.3 cap4}
Let $7\mu > \lambda.$ Then there is a small $\delta > 0$,  such that 
\begin{equation}
\sup \limits_{0 \le t \le T} \int_{\Omega} \rho(x,t) | u(x,t) |^{3+\delta}dx+ \int_{0}^{T} \int_{\Omega}( | u |^{1+\delta} | \nabla u|^2|dxdt \le C.
\end{equation}
\end{lemma}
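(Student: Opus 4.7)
The plan is to test the momentum equation against $|u|^{1+\delta}u$ and close a Gronwall inequality for $\int_\Omega \rho |u|^{3+\delta}\,dx$ in terms of quantities already controlled by Lemma \ref{Lemma 2.1 cap4} and the upper bound on the density \eqref{above bound density}. Multiplying the momentum equation in \eqref{pb1} by $|u|^{1+\delta}u$, integrating over $\Omega$, and using the continuity equation to handle the time and convective parts gives
\[
\frac{1}{3+\delta}\frac{d}{dt}\int_\Omega \rho|u|^{3+\delta}\,dx + \int_\Omega Lu \cdot u|u|^{1+\delta}\,dx + \int_\Omega \nabla p \cdot u|u|^{1+\delta}\,dx = \int_\Omega \rho f \cdot u|u|^{1+\delta}\,dx.
\]
Integration by parts on $Lu$ produces a quadratic form in $\nabla u$ weighted by $|u|^{1+\delta}$ of the schematic form
\[
\mu\!\int |u|^{1+\delta}|\nabla u|^2 + (\lambda+\mu)\!\int |u|^{1+\delta}(\operatorname{div} u)^2 + (1+\delta)\,\text{cross terms},
\]
where the cross terms couple $|u|^{\delta-1}$ with first derivatives of $|u|^2$.

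The heart of the argument is the algebraic verification that, under $\mu > \lambda/7$ (i.e.\ $7\mu > \lambda$ as in \eqref{visk cap4}), there exists $\delta_0>0$ such that for every $\delta\in(0,\delta_0]$ the weighted quadratic form is bounded below by $c_0\int|u|^{1+\delta}|\nabla u|^2\,dx$ with $c_0>0$. This is a pointwise matrix positivity check in the spirit of Hoff; the restriction $7\mu>\lambda$ is precisely what guarantees that the cross terms cannot destroy the coercivity for sufficiently small $\delta$. To make this rigorous near $\{u=0\}$ I would regularize $|u|$ by $(|u|^2+\varepsilon)^{1/2}$, carry out the computation, and pass to $\varepsilon\downarrow 0$ by monotone/dominated convergence.

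For the right-hand side, the pressure contribution is rewritten as
\[
\int_\Omega \nabla p \cdot u|u|^{1+\delta}\,dx = -\int_\Omega p\,\operatorname{div}\!\bigl(u|u|^{1+\delta}\bigr)\,dx,
\]
and then estimated via the constitutive law $p=R\rho\theta$, the $L^\infty$ bound on $\rho$ from \eqref{above bound density}, the contradiction hypothesis $\|\theta\|_{L^\infty(0,T;L^\infty)}\le c$ and $\|\operatorname{div} u\|_{L^1(0,T;L^\infty)}\le c$ from \eqref{ASSURDO 2}, combined with Young's inequality to absorb a small multiple of $\int |u|^{1+\delta}|\nabla u|^2$ into the coercive term and leave a Gronwall-type multiple of $\int \rho|u|^{3+\delta}$. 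For the gravitational term $\rho f=-\rho\nabla\Phi$ with $\Delta\Phi=4\pi G\rho$, elliptic regularity together with the density bound \eqref{above bound density} gives $f\in L^\infty(0,T;W^{1,p})$ for any $p<\infty$, so $\int \rho f\cdot u|u|^{1+\delta}$ is controlled by H\"older and Young in terms of $\int \rho|u|^{3+\delta}$ and a constant depending only on $T$.

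Finally, I would collect these estimates into
\[
\frac{d}{dt}\int_\Omega \rho|u|^{3+\delta}\,dx + c_0\int_\Omega |u|^{1+\delta}|\nabla u|^2\,dx \le \Psi(t)\Bigl(1+\int_\Omega \rho|u|^{3+\delta}\,dx\Bigr),
\]
with $\Psi\in L^1(0,T)$ built from $\|\operatorname{div} u\|_{L^\infty}$, $\|\theta\|_{L^\infty}$ and the data, and conclude by Gronwall's inequality followed by time integration. The main obstacle is the coercivity verification in the second paragraph: tracking the precise algebraic constants to identify the admissible range of $\delta$ for the given $\mu,\lambda$ is exactly where the extra viscosity assumption \eqref{visk cap4} is forced; a secondary technical point is the justification of the computation at points where $u$ vanishes, which is handled by the $\varepsilon$-regularization sketched above.
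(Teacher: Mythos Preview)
Your proposal is correct and follows exactly the Hoff-type argument that the paper defers to: the paper does not give its own proof but simply refers to \cite{fan} and \cite{huang2010}, whose argument is precisely the one you outline---multiply the momentum equation by $|u|^{1+\delta}u$, exploit the coercivity of the resulting weighted quadratic form under the restriction $7\mu>\lambda$, estimate the pressure and force contributions using \eqref{above bound density} and \eqref{ASSURDO 2}, and close by Gronwall. Your identification of the role of \eqref{visk cap4} in the coercivity step and the $\varepsilon$-regularization near $\{u=0\}$ are exactly the technical points handled in those references.
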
 
We refer to \cite{fan} and \cite{huang2010}  for the proof. 
By using the Lemma \ref{Lemma 2.3 cap4} we estimate the first order spatial derivatives of the density $\rho$ and velocity $u$. 

\begin{lemma}
\label{Lemma 2.4 cap4}
Under \eqref{ASSURDO 2}, we have  for any $T < T_*$ that 
\begin{equation}
\sup \limits_{0 \le t \le T} \| \nabla {\rho(t)}\|_{L^2} + \int_{0}^{T} \| \rho_t \|_{L^2}^2 dt \le C,
\end{equation}
\begin{equation}
\sup \limits_{0 \le t \le T} \| u(t) \|_{H^1_0}^2 + \int_{0}^{T} \int_{\Omega} \rho |u_t|^2dxdt \le C,
\end{equation}
\begin{equation}\label{stima u H2 in t}
\int_{0}^{T} \|u(t)\|_{H^2}^2dt \le C.
\end{equation}
\end{lemma}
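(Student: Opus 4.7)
The plan is to couple the three estimates: the $L^2$ bound on $\nabla \rho$ depends on the $H^2$ bound for $u$, the $H^1$ bound on $u$ and the $L^2$ bound on $\sqrt{\rho}u_t$ are obtained by testing momentum against $u_t$, and the $H^2$ bound for $u$ is recovered at the very end by elliptic regularity. Everything is then closed by a Gronwall argument in which the assumption (\ref{ASSURDO 2}) supplies the integrable factor $\|\nabla u\|_{L^\infty}$, while the bounds on $\rho$, $\theta$ and $\rho|u|^{3+\delta}$ coming from (\ref{above bound density}), the positivity of $\theta$, Lemma \ref{Lemma 2.1 cap4} and Lemma \ref{Lemma 2.3 cap4} play the role of coefficient bounds. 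Throughout the computation the fact that $f=-\nabla\Phi$ enjoys the regularity (\ref{reg f}) is used exactly as in the previous sections.

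For the density bound I would differentiate the continuity equation in space, multiply by $\nabla \rho$ and integrate by parts using $u=0$ on $\partial\Omega$, obtaining
\begin{equation*}
\tfrac{d}{dt}\|\nabla \rho\|_{L^2}^2 \le C\|\nabla u\|_{L^\infty}\|\nabla \rho\|_{L^2}^2 + C\|\rho\|_{L^\infty}\|\nabla^2 u\|_{L^2}\|\nabla \rho\|_{L^2}.
\end{equation*}
Here $\|\rho\|_{L^\infty}$ is controlled by (\ref{above bound density}) and $\|\nabla u\|_{L^\infty}\in L^1_t$ by assumption, so a Gronwall-in-time argument will give $\sup_{[0,T]}\|\nabla \rho\|_{L^2}\le C$ provided $\|\nabla^2 u\|_{L^2}\in L^2_t$; the latter is exactly (\ref{stima u H2 in t}), so the density estimate is coupled to, but cleanly separated from, the velocity estimates. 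The bound on $\rho_t$ in $L^2_tL^2$ is then immediate from $\rho_t=-u\cdot\nabla\rho-\rho\operatorname{div}u$ together with $\|\nabla\rho\|_{L^2}\in L^\infty_t$, $u\in L^\infty_tL^6$ (from $u\in L^\infty_t H^1_0$, to be proved next) and $\operatorname{div}u\in L^\infty_t L^2$.

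For the velocity part I would test the momentum equation against $u_t$, producing
\begin{equation*}
\int_\Omega \rho |u_t|^2 + \tfrac{1}{2}\tfrac{d}{dt}\left[\mu\|\nabla u\|_{L^2}^2+(\lambda+\mu)\|\operatorname{div}u\|_{L^2}^2\right] = -\int_\Omega \rho(u\cdot\nabla u)\cdot u_t-\int_\Omega \nabla p\cdot u_t+\int_\Omega \rho f\cdot u_t.
\end{equation*}
The convective term is absorbed by Cauchy-Schwarz, $\|\rho\|_{L^\infty}$ and the Gagliardo-Nirenberg bound $\|u\|_{L^\infty}^2 \le C\|u\|_{H^1}\|u\|_{H^2}$, which after Young's inequality produces a quantity of the form $\epsilon\int\rho|u_t|^2+\epsilon\|u\|_{H^2}^2+C\|\nabla u\|_{L^2}^4$; the $H^2$-norm of $u$ will be reabsorbed after we close the loop. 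The pressure term is integrated by parts to pull out a total time derivative, $-\int\nabla p\cdot u_t=\tfrac{d}{dt}\int p\operatorname{div}u-\int p_t\operatorname{div}u$, and $p_t=R(\rho_t\theta+\rho\theta_t)$ is split: the $\rho_t\theta$ contribution becomes $-\int\rho u\cdot\nabla(\theta\operatorname{div}u)$ after using the continuity equation, which is controlled by $\|\theta\|_{L^\infty}$, $\|\nabla\theta\|_{L^2}$ and $\|\nabla u\|_{L^\infty}\|\nabla u\|_{L^2}$ from (\ref{ASSURDO 2}) and Lemma \ref{Lemma 2.1 cap4}, while the $\rho\theta_t$ part is handled by multiplying the energy equation by $\operatorname{div}u$ and treating it as a cross-term that combines with the analogous computation on the energy equation. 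The external force term $\int\rho f\cdot u_t$ is straightforward using (\ref{reg f}) and (\ref{above bound density}).

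Finally, for the $H^2$ bound on $u$ I would rewrite the momentum equation as the stationary Lamé system $Lu=\rho(f-u_t-u\cdot\nabla u)-\nabla p$ and apply the elliptic regularity estimate
\begin{equation*}
\|u(t)\|_{H^2}\le C\left(\|\sqrt{\rho}\,u_t\|_{L^2}+\|\rho\|_{L^\infty}^{1/2}\|u\|_{L^6}\|\nabla u\|_{L^3}+\|\nabla p\|_{L^2}+\|\rho f\|_{L^2}\right),
\end{equation*}
where $\|\nabla p\|_{L^2}\le R(\|\theta\|_{L^\infty}\|\nabla\rho\|_{L^2}+\|\rho\|_{L^\infty}\|\nabla\theta\|_{L^2})$. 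Squaring and integrating in time, the right-hand side is in $L^2_t$ thanks to the previous steps. The main obstacle will be the pressure-time term $\int p_t\operatorname{div}u$: the $\theta_t$ contribution cannot be bounded purely by the hypotheses (\ref{ASSURDO 2}) without invoking the heat equation, so one is forced to add a companion estimate for the temperature (multiplying the energy equation by $\theta$ or by a smoothed test function) to absorb $\|\nabla\theta\|_{L^2}^2$ and $\int\rho\theta_t^2$ into an enlarged Gronwall functional, which then closes simultaneously with the density and velocity estimates by taking $T$ or $\epsilon$ small enough at each step.
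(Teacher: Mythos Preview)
Your outline follows the same route as the paper (which in turn refers to Fan--Jiang--Ou \cite{fan}): differentiate the continuity equation and multiply by $\nabla\rho$, test momentum against $u_t$, pull out the total time derivative in the pressure term, and close with elliptic regularity for the Lam\'e system plus Gronwall. The paper's only addition to \cite{fan} is to check that the new source terms $\rho f=-\rho\nabla\Phi$ and $\rho h=\rho qk\phi(\theta)Z$ are harmless, which they are by \eqref{reg f} and the bound $\int_0^t\!\int_\Omega \rho qk\phi(\theta)Z\le C$.

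There is one genuine imprecision in your last paragraph. You flag $\int p_t\operatorname{div}u$ as ``the main obstacle'' because of the $\rho\theta_t$ piece, and propose to cure it by enlarging the Gronwall functional with $\int\rho\theta_t^2$ and possibly ``taking $T$ small''. Neither step is needed, and the second would actually kill the argument (the whole point is that the constants depend only on $T_*$, not on a smallness of $T$). The clean way---and the way the paper and \cite{fan} do it---is to use the \emph{pressure evolution equation}
\[
P_t+u\cdot\nabla P+\gamma P\operatorname{div}u=(\gamma-1)\Big(k\Delta\theta+\tfrac{\mu}{2}|\nabla u+\nabla u^T|^2+\lambda(\operatorname{div}u)^2+\rho qk\phi(\theta)Z\Big),
\]
which is exactly the energy equation rewritten after multiplying by $R$ and combining with continuity. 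Substituting this for $P_t$ in $\int P_t\operatorname{div}u$ makes $\theta_t$ disappear entirely: you are left with $\int\nabla\theta\cdot\nabla\operatorname{div}u$ (controlled by $\epsilon\|u\|_{H^2}^2+C_\epsilon\|\nabla\theta\|_{L^2}^2$, and $\|\nabla\theta\|_{L^2}^2\in L^1_t$ is already given by Lemma~\ref{Lemma 2.1 cap4}), a cubic-in-$\nabla u$ term (controlled by $\|\nabla u\|_{L^\infty}\|\nabla u\|_{L^2}^2$ via \eqref{ASSURDO 2}), and bounded lower-order pieces. So no companion temperature estimate and no smallness are required at this stage; the temperature estimates are postponed to Lemma~\ref{Lemma:2.5_cap4}. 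Also, for the convective term $\int_\Omega\rho|u\cdot\nabla u|^2$ the paper (again via \cite{fan}) uses Lemma~\ref{Lemma 2.3 cap4} rather than the interpolation $\|u\|_{L^\infty}^2\le C\|u\|_{H^1}\|u\|_{H^2}$ you propose; your route works too, but note the exponent: Young gives $\epsilon\|u\|_{H^2}^2+C_\epsilon\|\nabla u\|_{L^2}^6$, not $\|\nabla u\|_{L^2}^4$.
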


\begin{proof}
The proof follows the same line of argument of Lemma 2.4 in \cite{fan} with a slight difference due to our choice for the external force $f$ and heat source $h$.  For this purpose we recall that
the velocity $u$ is solution to the following elliptic system 
\begin{equation}
-\mu \Delta u -(\lambda+\mu)\nabla {\operatorname{div}u}=F
\end{equation}
with $F:=-\rho u_t-\rho u \cdot \nabla u-\nabla P+f, $ and the following elliptic regularity estimate holds
\begin{equation}\label{2.18 fan Elliptic}
\begin{split}
\| u \|_{H^2} & \le C(  \| \sqrt{\rho}u_t\|_{L^2}+ | \nabla \rho \|_{L^2}+ \| \nabla \theta \|_{L^2}+\|  \nabla u \|_{L^2}+ \| f \|_{L^2}).
\end{split}
\end{equation}
while,  concerning the estimate for the pressure we have 
\begin{equation*}
\int_{0}^{t} \int_{\Omega} \rho q k \phi(\theta)Z \le c \| \rho \|_{L^\infty(0,T; L^\infty)} \| \sqrt{\theta} \|_{L^\infty (0, T; L^\infty)}  \| Z \|_{L^\infty (0, T; L^\infty)} \le C
\end{equation*}
\end{proof}
The next Lemma guarantees high order estimates for the temperature $\theta$ and velocity $u$ which are essential to apply our contradiction argument.
\begin{lemma}
\label{Lemma:2.5_cap4}
Let $$\Phi(t):=1+\left( \int_{0}^{t} \| \theta_t(s)\|^2_{H^1}ds \right)^\frac{1}{2}$$
Then for any $T < T_*$,  we have 
\begin{equation}\label{claim1}
\sup \limits_{0 \le t \le T} \| \theta (t)\|^2_{H^1}+ \int_{0}^{T} \int_{\Omega} \rho \theta_t^2 dxdt \le C \Phi(T),
\end{equation}
\begin{equation}\label{claim2}
\sup \limits_{0 \le t \le T} \| u (t)\|^2_{H^2}+ \int_{0}^{T} \| \theta (t) \|^2_{H^2} dt \le C \Phi(T),
\end{equation}
\begin{equation}\label{claim3}
\sup \limits_{0 \le t \le T} \| \sqrt{\rho(t)}u_t(t)\|^2_{L^2}+ \int_{0}^{T} \|u_t(t) \|^2_{H^1_0}dt \le  C \Phi(T),
\end{equation}
\end{lemma}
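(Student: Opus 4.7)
The plan is to establish the three estimates in the order \eqref{claim1}, \eqref{claim3}, \eqref{claim2}. The estimate \eqref{claim2} will follow from elliptic regularity applied to the momentum and energy equations and therefore requires the bound on $\|\sqrt{\rho}u_t\|_{L^2}$ supplied by \eqref{claim3}, while \eqref{claim3} in turn uses \eqref{claim1} to handle the time derivative of the pressure. The function $\Phi(T)$ plays the role of a bookkeeping device collecting all contributions involving $\theta_t$, which will be controlled in a subsequent lemma so as to eventually close the contradiction argument.

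For \eqref{claim1}, I would multiply the energy equation in \eqref{pb1} by $\theta_t$ and integrate over $\Omega$. Integration by parts using the Neumann condition \eqref{bc cap 4} produces $c_v\int\rho\theta_t^2\,dx + (k/2)\frac{d}{dt}\|\nabla\theta\|_{L^2}^2$ on the left; the right-hand side contains the convective term $\rho u\cdot\nabla\theta\,\theta_t$, the work term $p\operatorname{div}u\,\theta_t$, the viscous dissipation $Q(\nabla u)\theta_t$, and the reaction term $\rho qk\phi(\theta)Z\,\theta_t$. Each of these is controlled using the $L^\infty$ bound on $\theta$ from \eqref{ASSURDO 2}, the upper bound on $\rho$ from \eqref{above bound density}, the bounds $\|u\|_{L^\infty(0,T;H^1)}$ and $\|u\|_{L^2(0,T;H^2)}$ from Lemma~\ref{Lemma 2.4 cap4}, Sobolev embedding to obtain $\|u\|_{L^\infty}$, and the interpolation $\|\nabla u\|_{L^4}\le C\|\nabla u\|_{L^2}^{1/4}\|u\|_{H^2}^{3/4}$; the reaction contribution is handled by the boundedness of $Z$ and $\phi(\theta)$. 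After absorbing $c_v\int\rho\theta_t^2$ into the left and integrating in time, the estimate follows with right-hand side controlled by $\Phi(T)$ via Cauchy--Schwarz in time.

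For \eqref{claim3}, I would differentiate the momentum equation in time and test against $u_t$; using the continuity equation one obtains
\begin{equation*}
\frac{1}{2}\frac{d}{dt}\int\rho|u_t|^2\,dx + \mu\|\nabla u_t\|_{L^2}^2 + (\lambda+\mu)\|\operatorname{div}u_t\|_{L^2}^2 \le R(t),
\end{equation*}
where $R$ collects convective contributions handled via Lemma~\ref{Lemma 2.4 cap4}, the pressure time derivative $-\int p_t\operatorname{div}u_t\,dx$ with $p_t = R(\rho_t\theta + \rho\theta_t)$ producing a term $C\|\sqrt{\rho}\theta_t\|_{L^2}\|\nabla u_t\|_{L^2}$ whose time integral contributes the $\Phi(T)^{1/2}$ factor via Cauchy--Schwarz together with \eqref{claim1}, and the force contribution $(\rho f)_t u_t$ with $f_t = -\nabla\Phi_t$, $\Delta\Phi_t = 4\pi G\rho_t$, so that $\|f_t\|_{L^2}\le C\|\rho_t\|_{L^2}$ is bounded by Lemma~\ref{Lemma 2.4 cap4}. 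Gronwall then yields \eqref{claim3}. Finally, \eqref{claim2} follows by rewriting the momentum equation as $Lu = -\rho u_t - \rho u\cdot\nabla u - \nabla p + \rho f$ and applying the standard $H^2$ elliptic estimate for the Lam\'e operator, with each right-hand side term controlled by the previous steps; the $H^2$ bound on $\theta$ is obtained by treating the energy equation as an elliptic equation with right-hand side $-c_v\rho(\theta_t + u\cdot\nabla\theta) - p\operatorname{div}u + Q(\nabla u) + \rho h$ and using \eqref{claim1} together with Lemma~\ref{Lemma 2.4 cap4}.

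The main obstacle will be the delicate coupling through the pressure time derivative in the proof of \eqref{claim3}, which introduces $\theta_t$ on the right-hand side and would otherwise close the Gronwall loop; this is resolved precisely by the $\Phi(T)$ bookkeeping. A secondary subtlety is that both the self-gravitating external force $f = -\nabla\Phi$ and the reaction heat source $h = qk\phi(\theta)Z$ depend on the solution, so their time regularity must be established at each step via elliptic estimates for the Poisson equation and via the $L^\infty$ bounds on $\phi(\theta)$ and $Z$; in particular $Z$ remains bounded by the maximum principle applied to the mass-fraction equation, an input that is implicitly used throughout.
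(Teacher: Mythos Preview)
Your proposal is correct and follows essentially the same route as the paper: prove \eqref{claim1} by testing the temperature equation with $\theta_t$, then \eqref{claim3} by differentiating the momentum equation in time and testing with $u_t$, and finally deduce \eqref{claim2} from elliptic regularity for the Lam\'e and Laplace operators, with the $\Phi(T)$ factor arising in each case exactly from the $\theta_t$ contributions. The only differences are cosmetic---the paper estimates the $Q(\nabla u)\theta_t$ term via $\|\nabla u\|_{L^3}$ rather than your $\|\nabla u\|_{L^4}$ interpolation, and treats $\int\rho f_t u_t$ by $H^{-1}/H^1$ duality rather than your (equally valid and slightly cleaner) direct elliptic bound $\|f_t\|_{L^2}\le C\|\rho_t\|_{L^2}$ for the Poisson equation.
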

\begin{proof}
We take the temperature equation in \eqref{pb1} and we multiply it by $\theta_t$ and we integrate over $\Omega$. Then by using the results of Lemma \ref{Lemma 2.1 cap4},  Lemma \ref{Lemma 2.4 cap4},  the elliptic regularity estimate \eqref{2.18 fan Elliptic},  H\"{o}lder,  Sobolev and Young inequalities,  we get 
\begin{equation}\label{Gronwall per grad theta}
\begin{split}
& \dfrac{k}{2}\dfrac{d}{dt} \int_{\Omega} | \nabla \theta|^2 dx + c_v \int_{\Omega} \rho \theta_t^2 dx   \\ & =-c_v \int_{\Omega} \rho(u\cdot \nabla)\theta \theta_t dx- \int_{\Omega} P\operatorname{div}u \theta_t dx \\ & + \int_{\Omega} \left(\dfrac{\mu}{2}| \nabla u + \nabla u^T|^2+ \lambda(\operatorname{div}u)^2 \right)\theta_t dx  +\int_{\Omega} \rho qk\phi(\theta)Z\theta_t \\ & \le C \| u \|_{L^\infty}\| \nabla \theta \|_{L^2} \| \sqrt{\rho}\theta_t\|_{L^2} \\ & +C \| \nabla u \|_{L^2}( \| \| \sqrt{\rho}\theta_t\|_{L^2}+ \| \nabla u \|_{L^3} \| \theta_t\|_{H^1}) + \| \sqrt{\rho}\theta_t\|_{L^2} \| Z \|_{L^2} \\ & \le C \| u \|_{H^2} \| \nabla \theta \|_{L^2}\| \sqrt{\rho}\theta_t\|_{L^2} \\ & +C\| \sqrt{\rho}\theta_t\|_{L^2}+C\| u \|^\frac{1}{2}_{H^2}\| \theta_t\|_{H^1} + \| \sqrt{\rho}\theta_t\|_{L^2} \| Z \|_{L^2} \\& \le \epsilon \| \sqrt{\rho}\theta_t\|^2_{L^2}+C(\epsilon)(1+\|u\|^2_{H^2} \| \nabla \theta \|^2_{L^2}+\| u \|^\frac{1}{2}_{H^2}\| \theta_t\|_{H^1} \| Z \|^2_{L^2})
\end{split}
\end{equation}
for any $0 < \epsilon < 1.$ 
Integrating in time,  since the terms in the right hand side of \eqref{Gronwall per grad theta} have finite $L^1$ norm in time,  by applying Gronwall inequality  we get \eqref{claim1}.\\
We prove now \eqref{claim2}.
We take the time derivative in the momentum equation in \eqref{pb1} and we multiply it by $u_t$ and we integrate over $\Omega$
\begin{equation}
\begin{split}
& \dfrac{1}{2}\dfrac{d}{dt} \int_{\Omega} \rho u_t^2 dx + \int_{\Omega}\left( \mu | \nabla {u_t}|^2+ (\lambda+\mu )(\operatorname{div}u_t)^2\right)dx \\ & =\int_{\Omega} P_t\operatorname{div}u_t dx - \int_{\Omega}\rho u \cdot \nabla [(u_t+u\cdot \nabla u)u_t]dx \\ & - \int_{\Omega} \rho u_t\cdot \nabla u \cdot u_t dx+ \int_{\Omega} \rho_t f u_t+ \int_{\Omega} \rho f_tu_t:=  \sum_{j=1}^5 I_j
\end{split}
\end{equation}
The estimate for the integrals $I_j, j=1,...,3$ are similar to \cite{fan},  for completeness we recall them here and then we compute the estimates for $I_4$ and $I_5$ which depend on the external force $f$
\begin{equation} \label{I1,I3}
\begin{split}
\hspace{-0,4cm}|I_1|\!+\!| I_2|\!+\! |I_3| & \le  \epsilon \| \nabla {u_t}\|^2_{L^2}+C(\epsilon)( \| \rho_t\|^2_{L^2}+\| \sqrt{\rho}\theta_t\|_{L^2})+C( \epsilon) \| u_t \|^2_{H^1}  \\ & + C\epsilon^{-1} \| u \|^2_{H^2} (C(\epsilon) +  \| \nabla u \|_{H^1} \| \nabla {u_t} \|^2_{H^1}).
\end{split}
\end{equation}
\begin{equation}
\begin{split}
I_4 & = \int_{\Omega} \rho_t f u_t \le \| \rho_t \|_{L^2} \|u_t \|_{L^6} \| f \|_{L^3} \\ & \le C_{\epsilon}\| \rho_t \|^2_{L^2}+\epsilon \|u_t\|^2_{H^1}
\end{split}
\end{equation}
\begin{equation}\label{I5 CAP 4}
\begin{split}
I_5=\int_{\Omega}\rho f_t u_t & \le \| \rho \|_{L^\infty} \int_{\Omega} f_t u_t \le c \left \langle f_t,  u_t \right  \rangle \\ & \le c \| f_t \|_{H^{-1}} \|u_t \|_{H^1} \\ & \le \| f_t \|^2_{H^{-1}}+ \|u_t \|^2_{H^1} \le C_1 +C_2 \|u_t \|^2_{H^1}
\end{split}
\end{equation}
These estimates leads to the following inequality
\begin{equation}
\begin{split}
& \dfrac{1}{2}\dfrac{d}{dt} \int_{\Omega} \rho u_t^2 dx + \int_{\Omega}\left( \mu | \nabla {u_t}|^2+ (\lambda+\mu )(\operatorname{div}u_t)^2\right)dx \\ & \le C ( \| \sqrt{\rho}u_t \|^2_{L^2}+ \| u \|^2_{H^2}+ \| \rho_t\|^2_{L^2})+ C_1 \| \sqrt{\rho}u_t \|^2_{L^2}.
\end{split}
\end{equation}
By applying Gronwall inequality and using \eqref{claim1} we get \eqref{claim3}.  To conclude the proof of Lemma \ref{Lemma:2.5_cap4} we observe that \eqref{claim2} follows by considering the time evolution equation for the temperature in \eqref{pb1} and using the elliptic regularity estimate \eqref{2.18 fan Elliptic} and Lemma \ref{Lemma 2.4 cap4}. \\
\end{proof}
The next lemma regards some estimates on $\theta_t.$
\begin{lemma}
\label{Lemma 2.6 cap4}
For any $T < T_*$, we have 
\begin{equation}
\sup \limits_{0 \le t \le T} \int_{\Omega} \rho(x,t) \theta^2_t(x,t)dx + \int_{0}^{T} \| \theta_t (t) \|^2_{H^1} dt \le C,
\end{equation}
\begin{equation}
\sup \limits_{0 \le t \le T} \| \theta (t)\|^2_{H^2} \le C.
\end{equation}
\end{lemma}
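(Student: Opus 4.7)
The plan is to close a bootstrap that upgrades the $\Phi(T)$-dependent estimates of Lemma \ref{Lemma:2.5_cap4} into truly uniform bounds on $[0,T_*)$: once a direct bound on $\sup_{0\le t\le T}\|\sqrt{\rho}\theta_t(t)\|_{L^2}^2+\int_0^T\|\theta_t\|_{H^1}^2\,dt$ is proved without $\Phi(T)$ on the right-hand side, $\Phi(T)\le C$ follows by its very definition and every estimate in Lemmas \ref{Lemma 2.1 cap4}--\ref{Lemma:2.5_cap4} becomes unconditional. To obtain this, I would differentiate the temperature equation in \eqref{pb1} with respect to $t$, multiply by $\theta_t$ and integrate over $\Omega$, using the continuity equation and the boundary condition $\theta_t|_{\partial\Omega}=0$. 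The result is an identity structurally identical to \eqref{45},
\begin{equation*}
\frac{c_v}{2}\frac{d}{dt}\int_\Omega \rho\theta_t^2\,dx + k\int_\Omega|\nabla\theta_t|^2\,dx = \sum_j I_j,
\end{equation*}
where each $I_j$ is a product of factors among $\rho_t,u_t,\nabla u_t,\theta_t,Z_t,\sqrt{\rho}\theta_t,\nabla u,\nabla\theta$.

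Next, I would estimate each $I_j$ by H\"older, Gagliardo--Nirenberg and Young inequalities, absorbing $\varepsilon\|\nabla\theta_t\|_{L^2}^2$ into the dissipative left-hand side. The new ingredient compared to Section 2 is the time derivative of the nonlinear heat source $h=qk\phi(\theta)Z$: expanding $h_t=qk[\phi'(\theta)\theta_tZ+\phi(\theta)Z_t]$ produces contributions of the form $\int\rho\phi'(\theta)Z\theta_t^2$ and $\int\rho\phi(\theta)Z_t\theta_t$. Since $\theta$ is bounded above by the contradiction hypothesis \eqref{ASSURDO 2} and bounded below away from $0$ by the positivity argument following Lemma \ref{Lemma 2.2 cap4}, both $\phi(\theta)$ and $\phi'(\theta)$ are bounded, so these contributions reduce to standard quadratic terms controllable by $\|\sqrt{\rho}\theta_t\|_{L^2}^2$ and $\|Z_t\|_{L^2}^2$. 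Combining all estimates, I would arrive at a differential inequality
\begin{equation*}
\frac{d}{dt}\|\sqrt{\rho}\theta_t\|_{L^2}^2+\tfrac{k}{2}\|\nabla\theta_t\|_{L^2}^2 \le A(t)\|\sqrt{\rho}\theta_t\|_{L^2}^2+B(t),
\end{equation*}
with $A,B\in L^1(0,T)$ uniformly in $T<T_*$, and with the initial datum $\|\sqrt{\rho}\theta_t(0)\|_{L^2}^2$ controlled via the compatibility condition \eqref{compat cond} exactly as in the estimate of $\limsup_{\tau\to0}\|\sqrt{\rho}\theta_t(\tau)\|_{L^2}^2$ in Section 2. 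Gronwall then yields the first claim.

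For the second claim I would view the energy equation as the elliptic problem
\begin{equation*}
-k\Delta\theta = -c_v\rho(\theta_t+u\cdot\nabla\theta)-p\operatorname{div}u+Q(\nabla u)+\rho h,
\end{equation*}
whose right-hand side is bounded in $L^\infty(0,T;L^2)$ by the first claim, the now-unconditional bound $u\in L^\infty(0,T;H^2)$ from the bootstrapped Lemma \ref{Lemma:2.5_cap4}, and the bounds of Lemma \ref{Lemma 2.4 cap4}; standard elliptic regularity with the Neumann data of \eqref{bc cap 4} then gives $\theta\in L^\infty(0,T;H^2)$. The main obstacle is arranging the $I_j$-estimates so that no product $\Phi(T)\cdot\Phi(T)$ appears, which would make the Gronwall argument circular; concretely, whenever a factor controlled only by $\Phi(T)^{1/2}$ from Lemma \ref{Lemma:2.5_cap4} enters an $I_j$ (typically $\|u\|_{H^2}$ or $\|\nabla\theta\|_{L^2}$ in $L^\infty_t$), I would interpolate it against a factor with a genuinely uniform bound from Lemma \ref{Lemma 2.4 cap4}, so that each $\Phi(T)$ enters at most linearly in the $L^1_t$ coefficient and can be absorbed into a Gronwall-type inequality that closes in $\Phi(T)$.
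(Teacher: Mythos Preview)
Your proposal is correct and follows essentially the same route as the paper: differentiate the energy equation in time, test with $\theta_t$, estimate each resulting term so that only $\|\sqrt{\rho}\theta_t\|_{L^2}^2$ appears with an $L^1_t$ coefficient, apply Gronwall, and then read off $\theta\in L^\infty_t H^2$ from elliptic regularity; your discussion of why the $\Phi(T)$-dependent bounds of Lemma~\ref{Lemma:2.5_cap4} may enter at most linearly is exactly the mechanism the paper uses (implicitly) to close the bootstrap.

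Two small corrections. First, in this section the paper has switched to the Neumann condition $\partial_\nu\theta=0$ (see \eqref{bc cap 4}), so you should not invoke $\theta_t|_{\partial\Omega}=0$; the integration by parts $\int(-k\Delta\theta_t)\theta_t=k\int|\nabla\theta_t|^2$ is of course still valid. Second, you do not need a uniform positive lower bound on $\theta$ to control the Arrhenius terms: for $\phi(\theta)=\sqrt{\theta}\,e^{-E/\theta}$ one has $\phi'(\theta)=e^{-E/\theta}\bigl[\tfrac{1}{2\sqrt{\theta}}+E\theta^{-3/2}\bigr]$, which is bounded on all of $(0,\infty)$; the paper exploits this directly (its term $J_9$ uses that $\phi(\theta)/\theta^2$ is bounded), so the positivity argument after Lemma~\ref{Lemma 2.2 cap4} is not needed at this step.
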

\begin{proof}
We consider the temperature equation in \eqref{pb1} and we take the time derivative and we multiply it by $\theta_t$. By integrating over $\Omega$ we have
\begin{equation}
\begin{split}
& \dfrac{1}{2}\dfrac{d}{dt}\int_{\Omega}\rho \theta_t^2dx+k\int_{\Omega} | \nabla {\theta_t}|^2dx \\ & = \int_{\Omega}R \rho \theta_t^2 \operatorname{div}u\theta_tdx+\int_{\Omega}R \rho_t \theta \operatorname{div}u\theta_tdx+R\rho \theta \operatorname{div}u_t\theta_t dx \\ & + \int_{\Omega} [ \mu ( \nabla u + \nabla u^t) : (\nabla {u_t}+ \nabla {u^t_t})+ 2\lambda \operatorname{div}u\operatorname{div}u_t] \theta_t dx \\ & - \int_{\Omega} \rho u \cdot \nabla \theta \theta_t dx -\int_{\Omega} \rho u_t \cdot \nabla \theta \theta_t- \int_{\Omega} \rho_t \theta^2_t dx+\int_{\Omega} \rho_t qk\phi(\theta)Z\theta_t \\ & +\int_{\Omega} \dfrac{1}{\theta^2}\rho qk\phi(\theta)\theta_t^2Z+ \int_{\Omega}\rho qk \phi(\theta)Z_t \theta_t := \sum_{i=1}^{10} J_i
\end{split}
\end{equation}

The estimates for the integrals $J_i$, $i=1,.....,7$ are quite standard,   we rewrite them here and we refer to \cite{fan} for a detailed discussion.  
\begin{equation}
\begin{split}
|J_1|+|J_2|+|J_3|+|J_7| & \le  C(\epsilon) \| \sqrt{\rho}\theta_t \|_{L^2} ( \| \nabla u \|^2_{H^1} + \| u \|^2_{H^2}) \\ & +  \epsilon \| \theta_t\|^2_{H^1}+ C(\epsilon) \| u_t \|^2_{H^1} 
\end{split}
\end{equation}
\begin{equation}
\begin{split}
\int_{0}^{t} |J_4|+|J_5|+|J_6| ds & \le C(\epsilon) \left( 1+ \| \theta_t \|^2_{L^2(0,t; H^1)}+ \int_{0}^{t} \|u \|^2_{H^2}\|\sqrt{\rho}\theta_t \|_{L^2} ds\right) 
\end{split}
\end{equation}
We write the details for $J_8, J_9,J_{10}$ which depends on the Arrhenius function $\phi(\theta).$
We use Lemma \ref{Lemma 2.4 cap4} and we make an extensive use of H\"{o}lder,  Sobolev and Young inequalities.
\begin{equation}
\begin{split}
|J_8|= \int_{\Omega}\rho_t qk \phi(\theta)Z\theta_t & \le c \| \rho_t \|_{L^2} \| \theta_t \|_{L^4} \| \sqrt{\theta} \|_{L^4} \\ & \le \tilde{c} \| \rho_t\|^2_{L^2} + \tilde{C}(\epsilon) \| \theta_t \|^2_{H^1},
\end{split}
\end{equation}
\begin{equation}
|J_9|= \int_{\Omega} \dfrac{1}{\theta^2}\rho qk\phi(\theta)\theta_t^2Z \le c_1 \| Z \|_{L^\infty}\| \sqrt{\rho}\theta_t\|^2_{L^2} \le   c_2 \| \sqrt{\rho}\theta_t\|^2_{L^2},
\end{equation}
since the function $\dfrac{\phi(\theta)}{\theta^2} $ is bounded.
\begin{equation}\label{J10 cap 4}
\begin{split}
|J_{10}| =\int_{\Omega}\rho qk \phi(\theta)Z_t \theta_t &  \le C_1\| \sqrt{\rho}\theta_t\|^2_{L^2}+C_2 \| \sqrt{\rho} Z_t \|^2_{L^2}.
\end{split}
\end{equation}
We end up up the following inequality
\begin{equation}
\begin{split}
& \| \sqrt{\rho(t)}\theta_t(t) \|^2_{L^2}+ \| \theta_t \|^2_{L^2(0, T; H^1)}\\ &  \le C+ C \int_{0}^{t} ( 1+ \| u \|^2_{H^2}) \| \sqrt{\rho(s)}\theta_t(s) \|^2_{L^2} ds, 
\end{split}
\end{equation}
where $0 \le t \le T.$ By applying once more Gronwall inequality,  we get the first claim of Lemma \ref{Lemma 2.6 cap4}.  The estimate for the temperature can be easily obtained by considering the temperature equation in \eqref{pb1} and Lemma \ref{Lemma:2.5_cap4}. This concludes this proof.
\end{proof}
The next lemma is related to an additional $L^q$ regularity of the solution. 
We omit the proof since by taking into account the regularity \eqref{reg f} the proof follows the same line of arguments as in \cite{fan}.
\begin{lemma}
\label{Lemma 2.7 cap4}
Let $ q \in (3, 6].$ Then 
\begin{equation}
\sup \limits_{0 \le t \le T} ( \| \rho_t(t)\|_{L^q}+ \| \rho(t)\|_{W^{1, q}} \le C,
\end{equation}
\begin{equation}
\int_{0}^{t} ( \| u(t) \|^2_{W^{2,q}}+ \| \theta(t)\|^2_{W^{2, q}}) dt \le C
\end{equation}
\end{lemma}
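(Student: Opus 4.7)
The plan is to combine elliptic regularity for the momentum and temperature equations with the transport structure of the continuity equation, relying on the bounds already established in Lemmas \ref{Lemma 2.4 cap4}, \ref{Lemma:2.5_cap4}, \ref{Lemma 2.6 cap4} and the contradiction hypothesis \eqref{ASSURDO 2} to close a Gronwall loop in the $L^q$ scale. First, I would differentiate the continuity equation in space, test against $|\nabla \rho|^{q-2}\nabla \rho$, and exploit the bound on $\|\rho\|_{L^\infty}$ from \eqref{above bound density} to obtain
\begin{equation*}
\frac{d}{dt}\|\nabla \rho\|_{L^q} \le C\bigl(1+\|\nabla u\|_{L^\infty}\bigr)\|\nabla \rho\|_{L^q} + C\|\nabla^2 u\|_{L^q}.
\end{equation*}
The factor $\|\nabla u\|_{L^\infty}$ sits in $L^1(0,T)$ by \eqref{ASSURDO 2}, so after Gronwall the remaining task is to integrate $\|u\|_{W^{2,q}}$ in time.

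Second, regarding the momentum equation as a Lamé system with right hand side $F = -\rho u_t - \rho u \cdot \nabla u - \nabla p + \rho f$, the standard elliptic estimate on a bounded domain yields
\begin{equation*}
\|u(t)\|_{W^{2,q}} \le C\bigl(\|\rho u_t\|_{L^q} + \|\rho u \cdot \nabla u\|_{L^q} + \|\nabla p\|_{L^q} + \|\rho f\|_{L^q}\bigr).
\end{equation*}
Using the Sobolev embedding $H^1\hookrightarrow L^q$ for $q\in(3,6]$ and Lemma \ref{Lemma:2.5_cap4}, one gets $\|u_t\|_{L^q}\le C\|u_t\|_{H^1}\in L^2(0,T)$; the convective term is controlled via $\|u\|_{L^\infty}\|\nabla u\|_{L^q}$ which is bounded by Lemma \ref{Lemma 2.6 cap4} combined with $\|u\|_{H^2}\in L^\infty$; the force $f=-\nabla\Phi$ with $\Delta\Phi=4\pi G\rho$ enjoys $\|f\|_{L^q}\le C\|\rho\|_{L^q}$ by the Calderón--Zygmund theory that underlies \eqref{reg f}. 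The pressure gradient decomposes as $\nabla p = R(\theta\nabla\rho + \rho\nabla\theta)$, producing the coupling term $\|\theta\|_{L^\infty}\|\nabla \rho\|_{L^q}$; this is precisely the quantity being iterated, and it is absorbed by the Gronwall step thanks to the hypothesis $\|\theta\|_{L^\infty(L^\infty)}<\infty$. The piece $\|\rho\|_{L^\infty}\|\nabla\theta\|_{L^q}$ lies in $L^2(0,T)$ since $\nabla\theta\in L^\infty(0,T;L^6)$ by Lemma \ref{Lemma 2.6 cap4}.

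Third, once $\|\nabla\rho\|_{L^q}$ has been controlled, I would run an analogous elliptic argument on the temperature equation rewritten as $-k\Delta\theta = -c_v\rho\theta_t - c_v\rho u\cdot\nabla\theta - p\,\mathrm{div}\,u + Q(\nabla u) + \rho h$, obtaining
\begin{equation*}
\|\theta(t)\|_{W^{2,q}} \le C\bigl(\|\rho\theta_t\|_{L^q} + \|u\|_{L^\infty}\|\nabla\theta\|_{L^q} + \|p\,\mathrm{div}\,u\|_{L^q} + \|\nabla u\|_{L^{2q}}^2 + \|\rho h\|_{L^q}\bigr).
\end{equation*}
The nonlinear term $Q(\nabla u)$ is handled by interpolation $\|\nabla u\|_{L^{2q}}^2\le C\|u\|_{H^2}\|u\|_{W^{2,q}}$, so the $L^2(0,T)$ norm of $\|\theta\|_{W^{2,q}}$ closes against the already bounded $L^2(0,T)$ norm of $\|u\|_{W^{2,q}}$. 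The reaction term $h=qk\phi(\theta)Z$ is harmless because $|\phi(\theta)|\le C\|\theta\|_{L^\infty}^{1/2}$ and $Z$ is bounded. Finally, the bound on $\|\rho_t\|_{L^q}$ is immediate from $\rho_t = -u\cdot\nabla\rho - \rho\,\mathrm{div}\,u$ using the $W^{1,q}$ control on $\rho$ and $u\in L^\infty\cap W^{1,q}$.

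The main obstacle lies in closing the coupled system: the $W^{2,q}$ estimates for $u$ and $\theta$ both require $\nabla\rho\in L^q$, whereas the density estimate needs $\|u\|_{W^{2,q}}\in L^1(0,T)$, so the three quantities must be tracked simultaneously. The two ingredients that make the loop close are the hypothesis \eqref{ASSURDO 2}, which provides the integrable-in-time factor $\|\nabla u\|_{L^\infty}$ needed for Gronwall and forces $\|\theta\|_{L^\infty}$ to act as a harmless multiplicative constant on $\|\nabla\rho\|_{L^q}$, and the elliptic regularity \eqref{reg f} of the self-gravitational force, which keeps the new term $\|\rho f\|_{L^q}$ under control by $\|\rho\|_{L^q}$ without introducing any further derivative of $\rho$.
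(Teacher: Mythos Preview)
Your proposal is correct and matches the approach the paper intends: the paper in fact omits the proof entirely, stating only that ``by taking into account the regularity \eqref{reg f} the proof follows the same line of arguments as in \cite{fan}.'' Your outline is exactly that standard Fan--Jiang--Ou argument---differentiate the continuity equation and test against $|\nabla\rho|^{q-2}\nabla\rho$, couple with the $W^{2,q}$ elliptic estimates for the Lam\'e and Poisson operators, and close via Gronwall using the contradiction hypothesis \eqref{ASSURDO 2}---and you have correctly singled out the one new point here, namely that the self-gravitational term $\rho f$ is controlled in $L^q$ by the Calder\'on--Zygmund regularity behind \eqref{reg f} without introducing any extra derivative of $\rho$.
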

We complete our discussion by stating and proving the regularity Lemmas concerning the mass fraction $Z$.  We point out that these estimates are the more relevant part of our contribution in the proof of this blow up result.
\begin{lemma}
\label{Lemma 2.8 cap4}
For any $T < T_*$ the following estimates hold
\begin{equation}
\sup \limits_{0 \le t \le T} \| Z \|^2_{H^1_0}+\int_{0}^{T} \int_{\Omega} \rho |Z_t|^2 dxdt \le C,
\end{equation}
\begin{equation}
\int_{0}^{T} \| Z \|^2_{H^2} dt \le C.
\end{equation}
\end{lemma}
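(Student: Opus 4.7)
The plan is to derive the two estimates in sequence, starting from an energy-type identity obtained by testing the $Z$-equation against $Z_t$, and then using elliptic regularity on the resulting equation to upgrade to the $H^2$ estimate. First I would use the continuity equation to rewrite \eqref{Zl} in the non-conservative form
\begin{equation*}
\rho Z_t + \rho u\cdot\nabla Z - D\Delta Z = -k\phi(\theta)\rho Z.
\end{equation*}

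Multiplying this identity by $Z_t$, integrating over $\Omega$, and integrating by parts on the Laplacian (using the homogeneous Dirichlet boundary condition) yields
\begin{equation*}
\int_\Omega \rho |Z_t|^2\,dx + \frac{D}{2}\frac{d}{dt}\|\nabla Z\|_{L^2}^2 = -\int_\Omega \rho u\cdot\nabla Z\, Z_t\,dx - k\int_\Omega \phi(\theta)\rho Z\, Z_t\,dx.
\end{equation*}
For the convective term I would use the bound $\|\rho\|_{L^\infty}\le C$ from \eqref{above bound density}, the Sobolev embedding $H^2\hookrightarrow L^\infty$, and Young's inequality to obtain
\begin{equation*}
\left|\int_\Omega \rho u\cdot\nabla Z\, Z_t\,dx\right| \le \tfrac{1}{4}\|\sqrt{\rho}Z_t\|_{L^2}^2 + C\|u\|_{H^2}^2\|\nabla Z\|_{L^2}^2.
\end{equation*}
For the reaction term I rely on the positivity and boundedness of $\theta$ (so that $\phi(\theta)\in L^\infty$) already established before Lemma \ref{Lemma 2.2 cap4}, together with the maximum principle applied to the non-conservative equation for $Z$, which preserves the bounds $0\le Z\le 1$ (since the reaction term $-k\phi(\theta)\rho Z$ vanishes at $Z=0$ and is nonpositive for $Z\ge 0$). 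These give
\begin{equation*}
\left|k\int_\Omega \phi(\theta)\rho Z\, Z_t\,dx\right| \le \tfrac{1}{4}\|\sqrt{\rho}Z_t\|_{L^2}^2 + C.
\end{equation*}

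Combining, I obtain a differential inequality of the Gronwall type
\begin{equation*}
\frac{d}{dt}\|\nabla Z\|_{L^2}^2 + \|\sqrt{\rho}Z_t\|_{L^2}^2 \le C\|u\|_{H^2}^2\|\nabla Z\|_{L^2}^2 + C.
\end{equation*}
The coefficient $\|u\|_{H^2}^2$ is in $L^1(0,T)$ by combining \eqref{claim2} of Lemma \ref{Lemma:2.5_cap4} with the bound $\Phi(T)\le C$ established in Lemma \ref{Lemma 2.6 cap4}. Hence Gronwall yields the first estimate of Lemma \ref{Lemma 2.8 cap4}. For the $H^2$ estimate, viewing the rewritten equation as an elliptic problem
\begin{equation*}
-D\Delta Z = -\rho Z_t - \rho u\cdot\nabla Z - k\phi(\theta)\rho Z,
\end{equation*}
standard elliptic regularity on a bounded smooth domain with Dirichlet boundary condition gives $\|Z\|_{H^2}\le C\|\Delta Z\|_{L^2}$, so that after squaring and integrating in time, each term on the right is controlled using the first estimate of Lemma \ref{Lemma 2.8 cap4}, the $L^\infty$ bound on $\rho$ and $\phi(\theta)$, and the fact that $\|u\|_{L^\infty_t L^\infty_x}\le C$ from $u\in L^\infty(0,T;H^2)$.

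The main obstacle I anticipate is the careful justification of the preservation $0\le Z\le 1$ for the nonlinear solution and the propagation of the $L^\infty$ bounds on $\theta$ and $\phi(\theta)$ through the reaction term at the level of the energy identity, since these are exactly what prevent the reaction term from producing an uncontrollable right-hand side. Once these bounds are in place, the proof is a relatively standard Gronwall-plus-elliptic-regularity argument similar in spirit to the estimates developed in Section 2 for the linearized system.
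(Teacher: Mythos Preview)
Your approach is correct and in fact somewhat more direct than the paper's. Both proofs begin by testing the $Z$-equation against $Z_t$ and then invoke elliptic regularity, but the treatment of the convective term $\int_\Omega \rho\, u\cdot\nabla Z\,Z_t\,dx$ differs. You split off one factor of $\sqrt{\rho}$ and use $\|u\|_{L^\infty}\le C\|u\|_{H^2}$, landing on a Gronwall inequality with time-dependent coefficient $\|u(t)\|_{H^2}^2$; this coefficient is in $L^1(0,T)$ already by Lemma~\ref{Lemma 2.4 cap4} (equation \eqref{stima u H2 in t}), so you do not actually need Lemmas~\ref{Lemma:2.5_cap4}--\ref{Lemma 2.6 cap4} here --- and citing only Lemma~\ref{Lemma 2.4 cap4} is preferable, since the proof of Lemma~\ref{Lemma 2.6 cap4} itself contains a term $\|\sqrt{\rho}Z_t\|_{L^2}^2$ in $J_{10}$. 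The paper instead bounds $\int_\Omega\rho|u\cdot\nabla Z|^2$ via H\"older with Hoff's higher-integrability estimate $\int_\Omega\rho|u|^{3+\delta}\le C$ from Lemma~\ref{Lemma 2.3 cap4}, followed by a Gagliardo--Nirenberg interpolation of $\|\nabla Z\|_{L^{2q/(q-2)}}$; this produces an $\epsilon\|Z\|_{H^2}^2$ term which is absorbed only after substituting the elliptic estimate $\|Z\|_{H^2}\le C(\|\sqrt{\rho}Z_t\|_{L^2}+\|\nabla Z\|_{L^2}+\|Z\|_{L^2})$ back into the energy inequality. Your route decouples the energy and elliptic steps cleanly and avoids Lemma~\ref{Lemma 2.3 cap4} altogether; the paper's route keeps the Gronwall coefficient a pure constant, relying only on the uniform-in-time bound of Lemma~\ref{Lemma 2.3 cap4} rather than on the $L^2_tH^2_x$ control of $u$. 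Both are valid.
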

\begin{proof}
We consider the equation for the mass fraction $Z$ in \eqref{pb1},  we multiply by $Z_t$ and we integrate over $\Omega$
\begin{equation}
\begin{split}
& \dfrac{d}{dt}\int_{\Omega} \dfrac{D}{2}|\nabla Z|+\int_{\Omega} \rho |Z_t|^2=\int_{\Omega} Z_t[ -\rho u\nabla Z- k\rho \phi(\theta)Z]   \\  & \le \int_{\Omega} |Z_t|\rho |u| |\nabla Z|+ \int_{\Omega}|Z_t| k \rho \phi(\theta)Z.
\end{split}
\end{equation}
We observe that 
$$\int_{\Omega} |Z_t|\rho |u| |\nabla Z| \le \dfrac{1}{2} \int_{\Omega} \rho |Z_t|^2+ \dfrac{1}{2} \int_{\Omega} \rho | u\cdot \nabla Z |^2,$$
so we get 
\begin{equation}
\begin{split}
& \dfrac{d}{dt}\int_{\Omega} \dfrac{D}{2}|\nabla Z|^2+\dfrac{1}{2}  \int_{\Omega} \rho |Z_t|^2=\int_{\Omega} Z_t[ -\rho u\nabla Z- k\rho \phi(\theta)Z]   \\  & \le \dfrac{1}{2} \int_{\Omega} \rho | u \cdot \nabla Z |^2+ \int_{\Omega}k \rho \phi(\theta)Z |Z_t|= I_1+ I_2.
\end{split}
\end{equation}
We start estimating the $I_j$.

\begin{equation}
\begin{split}
I_1=\dfrac{1}{2} \int_{\Omega} \rho | u \cdot \nabla Z |^2 &  \le  \left( \int_{\Omega} \rho |u|^q dx \right)^\frac{2}{q} \| \nabla Z \|^2_{L^{\frac{2q}{q-2}}}
\end{split}
\end{equation}
where we used H\"{o}lder inequality with exponents $p=\dfrac{q}{2}$ and $p'= \dfrac{q}{q-2}$ while for the first integral we used Lemma 4.3 with $q=3+ \delta.$
In order to estimate $\|\nabla Z \|^2_{L^{\frac{2q}{q-2}}}$ we  use the  Gagliardo Nirenberg interpolation inequality and we get
\begin{equation*}
\| \nabla Z \|_{L^{\frac{2q}{q-2}}} \le \| \nabla Z \|^{\frac{1}{2}}_{L^2}\| \nabla Z \|^{\frac{1}{2}}_{L^q},
\end{equation*}
so that
\begin{equation}
\begin{split}
\| \nabla Z \|^2_{L^{\frac{2q}{q-2}}} & \le \| \nabla Z \|_{L^2}\| \nabla Z \|_{L^q} \le C \| \nabla Z \|_{L^2} \| \nabla Z \|_{H^1} \\ &  \le C(\epsilon) \| \nabla Z \|^2_{L^2} + \epsilon \| Z \|^2_{H^2}.
\end{split}
\end{equation}
\begin{equation}
\begin{split}
I_2= \int_{\Omega} k\rho \phi(\theta)Z |Z_t| & \le c  \int_{\Omega} \rho \sqrt{\theta} e^{-\frac{1}{\theta}}|Z_t| Z  \le \tilde{c} \| \sqrt{\rho}Z_t \|^2_{L^2}.
\end{split}
\end{equation}
In order to estimate $\| Z \|^2_{H^2}$ from the mass fraction equation we have
\begin{equation}
\Delta Z= \dfrac{1}{D} \rho[ Z_t+ u \nabla Z + k\phi(\theta)Z],
\end{equation}
from which we deduce
\begin{equation}
\| Z \|_{H^2} \le c( \| \sqrt{\rho}Z_t \|_{L^2} + \| \sqrt{\rho}u \nabla Z\|_{L^2}+ \| Z \|_{L^2}).
\end{equation}
Moreover,  since
$$ \| \sqrt{\rho}u \nabla Z\|^2_{L^2} \le C( \epsilon) \| \nabla Z \|^2_{L^2}+ \epsilon \| Z \|^2_{H^2}. $$
we deduce that 
\begin{equation}\label{new 2.18 fan}
\| Z \|_{H^2} \le \tilde{c} ( \| \sqrt{\rho}Z_t \|_{L^2}+ \| \nabla Z \|_{L^2}+ \| Z \|_{L^2}).
\end{equation}
Summing up \eqref{new 2.18 fan} with the estimates for the $I_j$ we get the following inequality
\begin{equation*}
\begin{split}
\dfrac{d}{dt}\int_{\Omega} \dfrac{D}{2}|\nabla Z|^2+\tilde{C} \int_{\Omega} \rho |Z_t|^2  & \le \| \nabla Z \|^2_{L^2}+ \| Z \|^2_{H^2}+ \| Z \|^2_{L^2} \\ & \le \| \nabla Z \|^2_{L^2}+ \| \sqrt{\rho}Z_t \|^2_{L^2}+ \| \nabla Z \|^2_{L^2}+ \| Z \|^2_{L^2}
\end{split}
\end{equation*}
and so we end up with
\begin{equation}
\begin{split}
\dfrac{d}{dt}\int_{\Omega} \dfrac{D}{2}|\nabla Z|^2+\overline{C} \int_{\Omega} \rho |Z_t|^2  & \le C_1  \| \nabla Z \|^2_{L^2}+ \| Z \|^2_{L^2}.
\end{split}
\end{equation}
By applying Gronwall inequality we have 
$$ \dfrac{D}{2}\| \nabla Z\|^2_{L^2} \le c + \| Z \|^2_{L^2} \exp \left( \int_{0}^{t} C_1ds \right) $$
from which it follows 
$$ \sup \limits_{0 \le t \le T} \| \nabla Z \|^2_{L^2} \le c,$$
or equivalentely
$$ \sup \limits_{0 \le t \le T} \|  Z \|^2_{H^1_0} \le c.$$
\end{proof}

\begin{lemma}
\label{Lemma 2.9 cap4}
For any $ T < T_*, $ the following estimates hold
\begin{equation}
\sup \limits_{0 \le t \le T} \| Z \|^2_{H^2}  \le C,
\end{equation}
\begin{equation}
\sup \limits_{0 \le t \le T} \| \sqrt{\rho}Z_t \|^2_{L^2} + \int_{0}^{T} \| Z_t \|^2_{H^1_0} dt \le c.
\end{equation}
\end{lemma}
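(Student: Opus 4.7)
The plan is to mimic the higher order estimate for $Z$ derived in the linearized analysis (see the computation leading to \eqref{GronwdtZ}), but now with $v=u$ and using the bounds already established in Lemmas \ref{Lemma 2.4 cap4}--\ref{Lemma 2.8 cap4} for $\rho, u, \theta, \theta_t$ and low-order norms of $Z$. I would first rewrite the mass fraction equation, using the continuity equation, in the non-conservative form
\[
\rho Z_t + \rho u\cdot\nabla Z + k\phi(\theta)\rho Z = D\Delta Z,
\]
then differentiate it in time, multiply by $Z_t$, integrate over $\Omega$, and use $\rho_t+\operatorname{div}(\rho u)=0$ together with $Z_t|_{\partial\Omega}=0$ to cancel the boundary contributions.

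The resulting identity has the structure
\[
\tfrac{1}{2}\tfrac{d}{dt}\|\sqrt{\rho}Z_t\|_{L^2}^2 + D\|\nabla Z_t\|_{L^2}^2 \le \sum_{j=1}^{5} I_j,
\]
where the $I_j$ are precisely the terms appearing in \eqref{I1hz}--\eqref{I5hz}: one involving $\rho_t u\nabla Z$, one involving $\rho u_t\nabla Z$, one from $\phi'(\theta)\theta_t\rho Z$, one from $\phi(\theta)\rho_t Z$, and one from $\phi(\theta)\rho Z_t^2$. For each $I_j$ I would apply H\"older, Sobolev and Young inequalities exactly as in \eqref{I1hz}--\eqref{I5hz}, bounding the resulting factors by the quantities controlled by the previous lemmas: $\|\rho_t\|_{L^2\cap L^3}$ and $\|\rho\|_{L^\infty}$ by Lemmas \ref{Lemma 2.4 cap4}, \ref{Lemma 2.7 cap4}; $\|u\|_{H^2}$, $\|u_t\|_{H^1}$ by Lemma \ref{Lemma:2.5_cap4}; $\|\sqrt{\rho}\theta_t\|_{L^2}$, $\|\theta\|_{L^\infty}$ by Lemma \ref{Lemma 2.6 cap4}; and $\|\nabla Z\|_{L^2}$ by Lemma \ref{Lemma 2.8 cap4}. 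This yields a differential inequality of the form
\[
\tfrac{d}{dt}\|\sqrt{\rho}Z_t\|_{L^2}^2 + \tfrac{D}{2}\|\nabla Z_t\|_{L^2}^2 \le A(t)\|\sqrt{\rho}Z_t\|_{L^2}^2 + B(t),
\]
with $A,B\in L^1(0,T)$ thanks to the aforementioned bounds. To initialise the Gronwall step I would estimate $\limsup_{t\to 0^+}\|\sqrt{\rho}Z_t(t)\|_{L^2}^2$ from the mass fraction equation itself, using the compatibility condition and the regularity of the initial data as was done for $\theta_t$ in \eqref{3.30}. Applying Gronwall then delivers both
\[
\sup_{0\le t\le T}\|\sqrt{\rho}Z_t\|_{L^2}^2 \le C \quad\text{and}\quad \int_0^T\|\nabla Z_t\|_{L^2}^2\,dt \le C,
\]
i.e.\ the second claim (the $H^1_0$ bound on $Z_t$ follows from $\|\nabla Z_t\|_{L^2}$ via Poincar\'e).

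For the $H^2$ bound on $Z$ I would proceed as in \eqref{new 2.18 fan}: view the mass fraction equation as
\[
D\Delta Z = \rho Z_t + \rho u\cdot\nabla Z + k\phi(\theta)\rho Z,
\]
so by elliptic regularity
\[
\|Z\|_{H^2} \le C\bigl(\|\sqrt{\rho}Z_t\|_{L^2} + \|\sqrt{\rho}\,u\cdot\nabla Z\|_{L^2} + \|\phi(\theta)\rho Z\|_{L^2}\bigr),
\]
and each of the three right-hand side terms is now pointwise-in-time bounded thanks to the just-proven $L^\infty(0,T;L^2)$ control of $\sqrt{\rho}Z_t$, the $L^\infty(L^\infty)$ bound on $\rho,u,\theta,Z$, and the $L^\infty(L^2)$ bound on $\nabla Z$ from Lemma \ref{Lemma 2.8 cap4}. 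The main obstacle I anticipate is the term $I_3$ involving $\phi'(\theta)\theta_t$: even though $\phi'(\theta)=e^{-1/\theta}\bigl(\tfrac{1}{2\sqrt{\theta}}+\tfrac{1}{\sqrt{\theta}^{\,3}}\bigr)$ is singular at $\theta=0$, the positivity of $\theta$ established after Lemma \ref{Lemma 2.2 cap4} and the uniform bound $\theta\in L^\infty(L^\infty)$ from the contradiction hypothesis \eqref{ASSURDO 2} keep this factor bounded, so the estimate can be closed as in \eqref{I3hz} by pairing $\|\sqrt{\rho}\theta_t\|_{L^2}$ (controlled by Lemma \ref{Lemma 2.6 cap4}) with $\|\sqrt{\rho}Z_t\|_{L^2}$ on the left-hand side.
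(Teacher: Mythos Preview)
Your proposal is correct and follows the same strategy as the paper: differentiate the $Z$-equation in time, test against $Z_t$, bound the five resulting integrals, apply Gronwall for $\sup_t\|\sqrt{\rho}Z_t\|_{L^2}^2$ and $\int_0^T\|\nabla Z_t\|_{L^2}^2\,dt$, and then recover the uniform $H^2$ bound via the elliptic estimate \eqref{new 2.18 fan}. The only minor deviation is that for the terms $\int|\rho_t||u||\nabla Z||Z_t|$ and $\int\rho|u_t||\nabla Z||Z_t|$ the paper substitutes $\rho_t=-\rho\operatorname{div}u-u\cdot\nabla\rho$ and integrates by parts, whereas you bound them directly via $\|\rho_t\|_{L^q}$ (Lemma~\ref{Lemma 2.7 cap4}) and $\|u_t\|_{H^1}\in L^2_t$ (Lemma~\ref{Lemma:2.5_cap4}); your route is shorter and equally valid.
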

\begin{proof}
We take the time derivative of the equation for $Z$ in \eqref{pb1} and we multiply it by $Z_t$.  Integrating over $\Omega$ we end up with 
\begin{equation}\label{Z5 cap 4}
\begin{split}
&  \dfrac{1}{2}\dfrac{d}{dt}\int_{\Omega}\ \rho Z^2_t+ D\int_{\Omega} |\nabla Z_t|^2  \le  \int_{\Omega} |\rho_t| |u| | \nabla Z| |Z_t|+\int_{\Omega} \rho |u_t|  | \nabla Z | | Z_t| \\ & +k\int_{\Omega} \phi_t(\theta) \rho Z |Z_t|+k\int_{\Omega}\phi(\theta)|\rho_t|Z |Z_t|  +k\int_{\Omega}\phi(\theta)\rho |Z_t|^2 \\ &=\sum_{j=1}^5 I_j.
\end{split}
\end{equation}
Now we estimate all the integrals $I_j$,  $j=1,....,5.$ 
\begin{equation}\label{I1 CAPIT 4}
\begin{split}
I_1 & =\int_{\Omega} |\rho_t| |u| | \nabla Z| |Z_t| \\ &  \le \int_{\Omega} \rho| \operatorname{div}u | |u| | \nabla Z| |Z_t|+ \int_{\Omega} |u| | \nabla \rho | |u| |\nabla Z| |Z_t|= I_{11}+ I_{12}.
\end{split}
\end{equation}
\begin{equation*}
\begin{split}
I_{11} & = \int_{\Omega} \rho| \operatorname{div}u | |u| | \nabla Z| |Z_t| \le \| \sqrt{\rho}Z_t \|_{L^2} \| \operatorname{div}u\|_{L^6}\|u\|_{L^6}\| \nabla Z \|_{L^6} \\ & \le C_1 \| Z \|^2_{H^2}+C_2\| \sqrt{\rho}Z_t \|_{L^2} \|u \|^2_{H^2} \|u \|^2_{H^1}.
\end{split}
\end{equation*}
\begin{equation*}
\begin{split}
I_{12} & = \int_{\Omega} |u| | \nabla \rho | |u| |\nabla Z| |Z_t| \le \| \nabla \rho \|_{L^2} \| u \|_{L^\infty} \| u \|_{L^6} \| \nabla Z \|_{L^6} \| Z_t \|_{L^6} \\ &  \le c \| Z_t \|^2_{H^1}+ \tilde{c} \| \nabla \rho \|^2_{L^2} \|u \|^2_{H^2} \| u \|^2_{H^1} \| \nabla Z \|^2_{H^1}.
\end{split}
\end{equation*}
\begin{equation*}
\begin{split}
I_2=\int_{\Omega} \rho |u_t|  | \nabla Z | | Z_t| = \int_{\Omega}[ \operatorname{div}( \rho u_t Z)- \operatorname{div}( \rho u_t)Z]Z_t=I_{21}+I_{22}.
\end{split}
\end{equation*}
\begin{equation*}
\begin{split}
I_{21} & =  \int_{\Omega} \operatorname{div}(\rho u_t Z)Z_t= \int_{\Omega} \operatorname{div}(\rho u_t Z Z_t)-\int_{\Omega} \rho u_t Z \nabla {Z_t} \\ &  \le C_1  \| \sqrt{\rho}u_t \|^2_{L^2}+ C_2 \| \nabla {Z_t} \|^2_{L^2}.
\end{split}
\end{equation*}
\begin{equation}\label{I2J CAP 4}
I_{22}=- \int_{\Omega} \operatorname{div}( \rho u_t)ZZ_t \le \int_{\Omega} \rho \operatorname{div}u_tZ Z_t+ \int_{\Omega} \nabla \rho u_t Z Z_t.
\end{equation}
For the first integral in the right hand side of  \eqref{I2J CAP 4} we have that
\begin{equation}
\begin{split}
\int_{\Omega} \rho \operatorname{div}u_tZ Z_t &  \le C_1 \| \nabla {u_t}\|^2_{L^2}+ C_2\| \sqrt{\rho}Z_t \|^2_{L^2},
\end{split}
\end{equation} 
while the second can be estimated as follows
\begin{equation*}
\begin{split}
\int_{\Omega} \nabla \rho  u_t Z Z_t & \le \dfrac{1}{2} \| \nabla \rho \|^2_{L^2} \| \nabla {u_t} \|^2_{L^2}+\dfrac{1}{2} \| \nabla {Z_t} \|^2_{L^2}.
\end{split}
\end{equation*}
Then
\begin{equation}
\begin{split}
I_3 & = k \int_{\Omega} \phi_t(\theta) \rho Z |Z_t| =k \int_{}^{} \theta_t e^{-{\frac{1}{\theta}}} \bigg[\dfrac{1}{2 \sqrt{\theta}}+ \dfrac{1}{\sqrt{\theta^3}} \bigg] \rho Z |Z_t| \\ & \le c \| Z \|_{L^\infty} \int_{\Omega} \rho | \theta_t| |Z_t| \le \dfrac{c}{2}\| \sqrt{\rho}\theta_t\|^2_{L^2}+\dfrac{1}{2}\| \sqrt{\rho}Z_t\|_{L^2}.
\end{split}
\end{equation}

\begin{equation}
\begin{split}
I_4=k\int_{\Omega}\phi(\theta)|\rho_t|Z |Z_t| & \le c \| \rho_t \|_{L^2}  \| \sqrt{\theta} \|_{L^4}\| Z \|_{L^\infty} \| Z_t\|_{L^4} \\ & \le C_1 \| \rho_t \|_{L^2}+ C_2 \| \nabla {Z_t}\|^2_{L^2}.
\end{split}
\end{equation}

\begin{equation}\label{I5 CAPIT 4}
I_5=k\int_{\Omega}\phi(\theta)\rho |Z_t|^2 \le c \| \sqrt{\rho}Z_t\|^2_{L^2}.
\end{equation}
Summing up \eqref{I1 CAPIT 4}-\eqref{I5 CAPIT 4} we get the following inequality
\begin{equation}\label{gron capitolo 4}
\begin{split}
& \dfrac{1}{2}\dfrac{d}{dt} \| \sqrt{\rho}Z_t\|^2_{L^2}+ \tilde{c} \| \nabla {Z_t}\|^2_{L^2} \\ & \le c \| \sqrt{\rho}Z_t\|^2_{L^2}+ C_1 \| Z \|^2_{L^2}+ C_2 \| \sqrt{\rho}\theta_t\|^2_{L^2} \\ & + c\| \nabla \rho \|^2_{L^2} \| \nabla {u_t}\|^2_{L^2}+ \| u \|^2_{H^2}+ \| Z \|^2_{H^2},
\end{split}
\end{equation}
where we observe that all the terms in  the right hand side of \eqref{gron capitolo 4} have finite $L^1$ norm with respect to time.
Hence,  integrating \eqref{gron capitolo 4}  in time,  we get 
$$ \int_{0}^{T} \| Z_t \|^2_{H^1_0} \le C .$$
Moreover,  by applying Gronwall inequality we have
\begin{equation}
\| \sqrt{\rho}Z_t\|^2_{L^2} \le c+ F(t)
\end{equation}
with $F(t)$ properly defined such that the following estimate holds
$$ \sup \limits_{0 \le t \le T} \| \sqrt{\rho}Z_t \|^2_{L^2} \le C .$$
\end{proof}
We complete our analysis with the next Lemma. 
\begin{lemma}
\label{Lemma 2.10 cap4}
For any $ T < T_*, $ the following estimate holds
\begin{equation}
\int_{0}^{T} \|Z \|^2_{W^{2,q}} dt \le c.
\end{equation}
\end{lemma}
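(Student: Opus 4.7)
The plan is to view the mass fraction equation as a Poisson-type equation for $Z$ with source terms, and apply standard $L^q$ elliptic regularity. Combining the reaction-diffusion equation for $Z$ in \eqref{pb1} with the continuity equation yields the pointwise identity
\begin{equation*}
D\Delta Z = \rho Z_t + \rho u\cdot\nabla Z + k\phi(\theta)\rho Z \quad\text{in } \Omega,\qquad Z|_{\partial\Omega}=0.
\end{equation*}
Because $\Omega$ is bounded and smooth and $Z$ satisfies a homogeneous Dirichlet condition, standard $L^q$ elliptic regularity gives
\begin{equation*}
\|Z(t)\|_{W^{2,q}}\le C\bigl(\|\rho Z_t\|_{L^q}+\|\rho u\cdot\nabla Z\|_{L^q}+k\|\phi(\theta)\rho Z\|_{L^q}\bigr).
\end{equation*}

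Next I would estimate each term on the right using the bounds already obtained under the contradiction assumption \eqref{ASSURDO 2}. The factor $\rho$ is in $L^\infty$ by \eqref{above bound density}, and $\phi(\theta)=\sqrt{\theta}e^{-E/\theta}$ is bounded since $\theta\in L^\infty$ by hypothesis, while $0\le Z\le 1$. Thus
\begin{equation*}
\|\phi(\theta)\rho Z\|_{L^q}\le C.
\end{equation*}
For the transport term I would use the embedding $H^2(\Omega)\hookrightarrow L^\infty(\Omega)$ together with Lemma \ref{Lemma:2.5_cap4} to control $\|u\|_{L^\infty}$, and Lemma \ref{Lemma 2.9 cap4} combined with the embedding $H^2\hookrightarrow W^{1,q}$ (valid for $q\le 6$) to control $\|\nabla Z\|_{L^q}$, so that
\begin{equation*}
\|\rho u\cdot\nabla Z\|_{L^q}\le C\|u\|_{L^\infty}\|\nabla Z\|_{L^q}\le C.
\end{equation*}
Finally for the time-derivative term, Sobolev embedding $H^1_0\hookrightarrow L^q$ for $q\in(3,6]$ gives $\|Z_t\|_{L^q}\le C\|Z_t\|_{H^1_0}$.

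Squaring and integrating in $(0,T)$, I obtain
\begin{equation*}
\int_0^T\|Z\|_{W^{2,q}}^2\,dt\le C\int_0^T\bigl(\|Z_t\|_{H^1_0}^2+1\bigr)dt,
\end{equation*}
and the right-hand side is finite by Lemma \ref{Lemma 2.9 cap4}, which gives $\int_0^T\|Z_t\|_{H^1_0}^2dt\le C$. The only delicate point, which is really not so hard here, is to verify the integrability in time of $\|Z_t\|_{L^q}$; all other ingredients are time-uniform bounds already at our disposal. This closes the estimate and completes the proof of the lemma.
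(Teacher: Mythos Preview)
Your proof is correct and follows essentially the same route as the paper: rewrite the $Z$-equation as $D\Delta Z = \rho Z_t + \rho u\cdot\nabla Z + k\phi(\theta)\rho Z$, apply $L^q$ elliptic regularity, control $\|Z_t\|_{L^q}$ by $\|Z_t\|_{H^1_0}$ via Sobolev embedding, and bound the remaining terms using $\|u\|_{L^\infty}\le C\|u\|_{H^2}$ and $\|\nabla Z\|_{L^q}\le C\|Z\|_{H^2}$. The only cosmetic difference is that you invoke the time-uniform bounds on $\|u\|_{H^2}$ and $\|Z\|_{H^2}$ explicitly (via Lemmas \ref{Lemma:2.5_cap4}, \ref{Lemma 2.6 cap4}, \ref{Lemma 2.9 cap4}) to reduce the transport and reaction terms to constants before integrating, whereas the paper leaves the product $\|u\|_{H^2}\|Z\|_{H^2}$ and integrates directly; the effect is the same.
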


\begin{proof}
By the equation for $Z$ in \eqref{pb1} it follows
\begin{equation*}
\Delta Z= \dfrac{1}{D}[\rho Z_t + \rho u \nabla Z+k\phi(\theta)Z] 
\end{equation*}
from which we deduce 
\begin{equation}
\begin{split}
\| Z \|_{W^{2,q}} & \le c ( \| Z_t \|_{L^q}+ \| u \nabla Z \|_{L^q}+ \| Z \|_{L^q}) \\ &
\le \tilde{c}( \| \nabla {Z_t}\|_{L^2}+ \|u \|_{L^\infty}\| \nabla Z \|_{L^q}+ \| \nabla Z \|_{L^2}) \\ & \le c( \| \nabla {Z_t}\|_{L^2}+ \| u \|_{H^2} \| Z \|_{H^2}+ \| \nabla Z\|_{L^2}) 
\end{split}
\end{equation}
Integrating in time we get $$\int_{0}^{T} \|Z \|_{W^{2,q}} dt \le c.$$
\end{proof}
By using all the Lemmas,  \ref{Lemma 2.1 cap4} - \ref{Lemma 2.10 cap4}  we obtain bounds of the norms in the class of regularity \eqref{REGULARITY NL} of the solution $(\rho, \theta, u,Z)$. This estimates holds for any $T < T_*$ and depend only on $\Omega$,  on the initial data and on $T_*$ continuously (this time dependence is either polynomial or exponential). Hence,  we can consider a new initial data at $t=T$ and apply the local existence Theorem \ref{Teo2.1} in order to extend the solution.  Iterating this procedure,  we have that at a certain step,  there exists a time $T_1$ from which the solution does exists and such that $T+ T_1 > T_*$ which is a contradiction with respect to the maximality of $T_*$. This completes the proof of Theorem \ref{scndblowup}.

\addcontentsline{toc}{chapter}{Bibliography}

\end{document}